\setlist[enumerate]{leftmargin=25pt}
\setlist[itemize]{leftmargin=25pt}
\newtheorem{thm}{Theorem}[section]
\newtheorem{lemma}[thm]{Lemma}
\newtheorem{prop}[thm]{Proposition}
\newtheorem{cor}[thm]{Corollary}
\newtheorem{conj}[thm]{Conjecture}
\theoremstyle{definition}
\newtheorem*{rem}{Remark}
\newtheorem{claim}{Claim}
\theoremstyle{definition}
\newtheorem{defn}[thm]{Definition}
\DeclareMathOperator{\ord}{ord}
\newcommand{\rad}{\ensuremath{\operatorname{rad}}}
\newcommand{\Id}{\ensuremath{\operatorname{Id}}}
\newcommand{\bra}[1]{{\left({#1}\right)}}
\newcommand{\ent}[1]{{\left[{#1}\right]}}
\newcommand{\scal}[1]{{\left\langle{#1}\right\rangle}}
\newcommand{\set}[1]{{\left\{{#1}\right\}}}
\DeclarePairedDelimiter\abs{\lvert}{\rvert}
\newcommand{\Ga}{\ensuremath{\Gamma}}
\newcommand{\de}{\ensuremath{\delta}}
\newcommand{\eps}{\ensuremath{\varepsilon}}
\newcommand{\ze}{\ensuremath{\zeta}}
\newcommand{\ka}{\ensuremath{\kappa}}
\newcommand{\La}{\ensuremath{\Lambda}}
\newcommand{\la}{\ensuremath{\lambda}}
\newcommand{\sig}{\ensuremath{\sigma}}
\newcommand{\Sig}{\ensuremath{\Sigma}}
\newcommand{\vphi}{\ensuremath{\varphi}}
\newcommand{\om}{\ensuremath{\omega}}
\newcommand{\bears}{\begin{eqnarray*}}
\newcommand{\eears}{\end{eqnarray*}}
\newcommand{\ZZ}{\ensuremath{\mathbb{Z}}}
\newcommand{\FF}{\ensuremath{\mathbb{F}}}
\newcommand{\QQ}{\ensuremath{\mathbb{Q}}}
\newcommand{\RR}{\ensuremath{\mathbb{R}}}
\newcommand{\wh}[1]{\ensuremath{\widehat{#1}}}
\newcommand{\CC}{\ensuremath{\mathbb{C}}}
\newcommand{\Gal}{\ensuremath{\textnormal{Gal}}}
\newcommand{\Php}{\ensuremath{\Phi_p(X^{N/p})}}
\newcommand{\Phq}{\ensuremath{\Phi_q(X^{N/q})}}
\newcommand{\sm}{\ensuremath{\setminus}}
\newcommand{\ssq}{\ensuremath{\subseteq}}
\newcommand{\vn}{\ensuremath{\varnothing}}
\newcommand{\mf}{\ensuremath{\mathfrak}}
\newcommand{\ol}{\ensuremath{\overline}}
\newcommand{\wtn}[1]{\ensuremath{\bs1_{#1}\ast\bs1_{-#1}}}
\newcommand{\pdiv}{\ensuremath{\mid\!\mid}}
\newcommand{\bs}[1]{\ensuremath{\mathbf{#1}}}
\newcommand{\out}[1]{}
\numberwithin{equation}{section}
\title[Formal duality in finite cyclic groups]{Formal duality in finite cyclic groups}
\subjclass[2010]{43A46,11L40,20K01,13F20}
\keywords{Formal duality, energy minimization, field descent method, self-conjugacy.}
\author{Romanos Diogenes Malikiosis} 
\thanks{This research project is supported by Alexander von Humboldt Foundation.}
\address{Technische Universit\"at Berlin, Institut f\"ur Mathematik,
Sekretariat MA 4-1,
Stra{\ss}e des 17. Juni 136,
D-10623 Berlin, Germany}
\email{malikios@math.tu-berlin.de}
\begin{document}
 
 \begin{abstract}
 The notion of formal duality in finite Abelian groups appeared recently in relation to spherical designs, tight sphere packings, and energy minimizing configurations in
 Euclidean spaces. For finite cyclic groups it is conjectured that there are no primitive formally dual pairs besides the trivial one and the TITO configuration. This conjecture has been
 verified for cyclic groups of prime power order, as well as of square-free order. In this paper, we will confirm the conjecture for other classes of cyclic groups,
 namely almost all cyclic groups of order a product of two prime powers, with finitely many exceptions for each pair of primes, or whose order $N$ satisfies $p\pdiv N$, where $p$ a prime
 satisfying the so-called self-conjugacy property with respect to $N$. 
 For the above proofs, various tools were needed: the \emph{field descent method}, used chiefly for the circulant Hadamard conjecture, the techniques of Coven \&
 Meyerowitz for sets that tile $\ZZ$ or $\ZZ_N$ by translations, dubbed herein as \emph{the polynomial method}, 
 as well as basic number theory of cyclotomic fields, especially the splitting of primes in a given cyclotomic extension.
 \end{abstract}

 \maketitle
 
 \section{Introduction}
 \bigskip
 
 A fundamental problem in physics is the determination of ground states in a given space, with a fixed density of particles and a pair potential. These ground states are also
 called \emph{minimal energy configurations}. A typical example is the equilibrium state of electrons in a shell of an atom. Problems of this sort are extremely difficult
 to solve rigorously; the minimal energy configuration of five points on a sphere has recently gained some notoriety, having been determined for certain special cases for the
 potential function \cite{5points}.
 
 In the Euclidean space $\RR^n$ periodic configurations of fixed density are studied, 
 say $\rho=1$; a set $\La\ssq\RR^n$ is called \emph{periodic} if it satisfies $\La+L=\La$ for some lattice $L$, and its period is $\rho(\La)=N/\det(L)$, where $\det(L)$ is the volume of a fundamental
 parallelepiped of $L$. The lattice $L$ is called the \emph{period lattice} of $\La$, and there always
 exists a maximum such lattice. The Gaussian potential function is considered in this case \cite{CKRS,CKS}, defined by $G_c(r)=e^{-\pi cr^2}$, $c\in\RR$; this is the \emph{Gaussian core model}.
 For a potential function $f$ and a periodic set $\La=\bigcup_{j=1}^N(t_j+L)$, the total energy of the system is
 \begin{equation}\label{totalenergy}
 E_f(\La)=\frac{1}{N}\sum_{i,j=1}^N\sum_{\substack{v\in L\\ v\neq0\text{ if }i=j}}f(\abs{v+t_i-t_j}).
 \end{equation}
 When the density $\rho$ is very small, or when $f=G_c$ with $c\rightarrow\infty$, the optimal configuration approaches the optimal sphere packing \cite{CKRS,CKS}.
 
 Apart from the 1-dimensional case, where the energy minimizing configuration of density $1$ is $\ZZ$, there are no proofs that certain structured
 configurations minimize energy, however, there is strong numerical evidence towards certain patterns. In the study conducted in \cite{CKS}
 for the Gaussian core model and varying densities,
 a remarkable sort of symmetry was revealed between optimal configurations in densities $\rho$ and $1/\rho$, the so-called \emph{formal duality}. In particular, when $n\leq9$,
 the optimal configurations for densities $\rho$ and $1/\rho$ are either dual lattices, or when they are not lattices, they are formally dual periodic sets. Formally dual sets
 satisfy a very strong property, namely a generalization of the Poisson summation formula. We note that in all of the above situations, the density
 is considered fixed. For systems in equilibrium without the fixed density restriction this task is even more difficult; we refer the reader to \cite{particles}.
 
 We remind that for a Schwartz function $f:\RR^n\rightarrow\CC$, and a lattice $\La\ssq\RR^n$, the Poisson summation formula states that
 \begin{equation}\label{Poisson}
  \sum_{x\in\La}f(x)=\frac{1}{\det(\La)}\sum_{y\in\La^{\star}}\hat{f}(y)
 \end{equation}
 where $\hat{f}$ denotes the Fourier transform of $f$, defined by
 \begin{equation}\label{Fourier}
 \hat{f}(y)=\int f(x)e^{-2\pi i\scal{x,y}}dx,
 \end{equation}
 and $\La^{\star}:=\set{x\in\RR^n:\scal{x,y}\in\ZZ,\forall y\in\La}$ is the dual lattice of $\La$; finally, $\det(\La)$ is the volume of any fundamental parallelepiped of $\La$,
 also known as the covolume of $\La$. A consequence of this formula is 
 \begin{equation}\label{energies}
  f(0)+E_f(\La)=\frac{1}{\det(\La^{\star})}(\hat{f}(0)+E_{\hat{f}}(\La^{\star}))
 \end{equation}
 hence the energy $E_{\hat{f}}(\La^{\star})$ is a real number when $f$ is a real function, despite the fact that $\hat{f}$ is not in general real.
 Moreover, a lattice $\La$ minimizes $E_f$ among periodic configurations of density $\rho=1/\det(\La)$ if and only
 if $\La^{\star}$ minimizes $E_{\hat{f}}$ among periodic configurations of density $1/\rho$. The significance of the Gaussian core model is then justified by the relation $\wh{G}_c=c^{-n/2}G_{1/c}$
 (in particular, $\wh{G}_1=G_1$).
  
 It is known that \eqref{Poisson} characterizes dual pairs of lattices \cite{Cordoba} among discrete subsets of $\RR^n$; consider the \emph{average pair sum} for the periodic
 configuration $\La=\bigcup_{j=1}^N(t_j+L)$
 \begin{equation}\label{genPoisson}
  \Sig_f(\La)=\frac{1}{N}\sum_{i,j=1}^N\sum_{v\in L}f(v+t_i-t_j),
 \end{equation}
 or simply put, $\sum_f(\La)=f(0)+E_f(\La)$. We note that the above expression is independent of the choice of $t_i$, as the inner sum ranges over all elements of the lattice $\La$, hence the argument $v+t_i-t_j$
 takes each value from the translated lattice $L+t_i-t_j$ exactly once, and $L+t_i-t_j=L+t'_i-t'_j$ holds, for all $t'_i\in t_i+L$, $t'_j\in t_j+L$.
 
 Another consequence of \eqref{Poisson} is $\sum_f(\La)=\rho(\La)\sum_{\hat{f}}(\La^{\star})$ when $\La$ is a lattice (i.e. $N=1$). Minimal energy periodic
 configurations for the Gaussian core model found in \cite{CKS} at densities $\rho$ and $1/\rho$, say $\La$ and $\Ga$ respectively, were proven to satisfy
 \begin{equation}\label{average}
  \Sig_f(\La)=\rho(\La)\Sig_{\hat{f}}(\Ga).
 \end{equation}
 \begin{defn}\label{maindefn}
  Two periodic sets $\La,\Ga\ssq\RR^n$ are called formally dual if they satisfy \eqref{average} for every Schwartz function $f$.
 \end{defn} 

 Formally dual pairs that are not lattices seem to appear in a great scarcity in the 1-dimensional case; the only known example is $2\ZZ\cup(2\ZZ+\frac{1}{2})$ (or a scaled version
 thereof), the so-called \emph{TITO configuration}\footnote{TITO stands for two in-two out.}. 
 While there are more high dimensional examples of formal duality, they do not seem to appear yet in numerical computations; in the computations performed in \cite{CKS} and
 Coulangeon-Sch\"urmann\footnote{Private communication.} for $n\leq9$, all optimal configurations are linear images of $\ZZ^n$, TITO$\times\ZZ^{n-1}$, or TITO$^2\times\ZZ^{n-2}$. 
 Thus, the characterization of all 1-dimensional formally dual sets is in order; it was conjectured in \cite{CKRS}, 
 that $\ZZ$ and TITO are the only discrete periodic subsets of $\RR$ with density $1$, possessing a formal dual set.
 
 The above can be rephrased in terms of cyclic groups (for more details, see \cite{CKRS}). Let $\ZZ_N$ denote the cyclic groups of $N$ elements, and call a subset $T\ssq\ZZ_N$ \emph{primitive}, 
 if it is not contained in any proper coset of $\ZZ_N$. Then the aforementioned conjecture is equivalent to the following: 
 \begin{conj}\label{mainconj}
  The only primitive subsets of $\ZZ_N$ possessing a formal dual subset are
 $\set{0}\ssq\ZZ/\ZZ$ and $\set{0,1}\ssq\ZZ_4$.
 \end{conj}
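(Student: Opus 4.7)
The plan is to reformulate the formal duality condition for subsets $S,T\ssq\ZZ_N$ of sizes with $\abs{S}\cdot\abs{T}=N$ as an algebraic identity involving the mask polynomials $A(X)=\sum_{s\in S}X^s$ and $B(X)=\sum_{t\in T}X^t$ in the ring $\ZZ[X]/(X^N-1)$. After unwinding the average pair sum condition \eqref{average} via Poisson, formal duality becomes equivalent to a statement that the squared absolute values $\abs{\hat{\bs1}_S(\xi)}^2$ are, up to a fixed constant, the indicator of a set proportional to $T$. Rewriting this as a relation of the form $A(X)A(X^{-1})\equiv cB(X)\pmod{X^N-1}$ moves the problem into the arithmetic of cyclotomic integers.

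With this setup in hand, I would apply the polynomial method of Coven--Meyerowitz. For each $d\mid N$ one determines whether $\Phi_d(X)$ divides $A(X)$, producing sets of divisors $R_A,R_B\ssq\set{d:d\mid N,\,d>1}$; the duality relation forces $R_A$ and $R_B$ to partition this set of divisors, and the (T1), (T2) conditions on $A$ and $B$ should follow from this splitting. This gives $S$ and $T$ the shape of iterated arithmetic progressions, drastically restricting their possible structure and, in the cleanest cases, already forces a reduction of $N$ to a small exponent.

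The remaining freedom is then killed by the field descent method, in the spirit of its application to the circulant Hadamard conjecture. For each character $\xi$ of order $d$, the value $\hat{\bs1}_S(\xi)\in\ZZ[\zeta_d]$ has a prescribed integer norm; self-conjugacy of a prime $p$ modulo $N$ ensures that complex conjugation coincides with a Frobenius element on each prime above $p$, forcing the $p$-adic valuations on both sides of the norm equation to match in pairs. Combined with the divisibility constraint $p\pdiv N$, this ``doubling'' is incompatible with the primitivity of $S$ unless $S$ is a singleton or a TITO pattern. The same strategy, applied after splitting $N=p^aq^b$ into the two prime-power components and exploiting the explicit splitting behaviour of primes in $\QQ(\zeta_N)$, should dispose of almost all cases with exactly two prime divisors, with the finitely many exceptions arising where the cyclotomic norm equations admit small sporadic solutions.

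The main obstacle is the case where $N$ has at least three distinct prime divisors, \emph{none} of which is self-conjugate modulo $N$. Here the field descent method produces only weak congruence conditions and the Coven--Meyerowitz polynomial method, on its own, leaves room for highly nontrivial tilings; absent a new arithmetic ingredient (for instance, finer information on the $p$-parts of class groups of $\QQ(\zeta_N)$ or a genuine multiplier theorem for formally dual pairs), one cannot expect to close out the conjecture in this generality. For that reason my proposal is realistically a proof of the strong partial results stated in the abstract rather than of Conjecture \ref{mainconj} in full; the latter appears to require an additional idea to bridge the gap left by the absence of self-conjugate primes.
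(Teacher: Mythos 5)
First, a point of framing: the statement you were asked to prove is Conjecture \ref{mainconj}, which the paper itself does not prove; it only verifies the conjecture for the families of $N$ listed in the introduction (orders with at most two prime divisors, up to finitely many exceptions per pair of primes, and orders with $p\pdiv N$ for some self--conjugate prime $p$). Your honest concluding paragraph --- that these methods yield only the partial results and that the case of three or more prime divisors with no self--conjugate prime remains open --- is exactly where the paper also stops, and your identification of the three tools (mask polynomials, field descent, self--conjugacy) matches the paper's toolkit. At the level of strategy your proposal is therefore aligned with what is actually done.

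However, two of your concrete steps would fail. The reformulation $A(X)A(X^{-1})\equiv cB(X)\pmod{X^N-1}$ is not what formal duality says: that congruence would mean the difference multiset $S-S$ is supported exactly on $T$ with constant multiplicity, which is already false for the TITO pair $S=T=\set{0,1}\ssq\ZZ_4$ (there $\wtn{S}(0)=2$ while $\wtn{S}(1)=\wtn{S}(3)=1$). The correct identity (Proposition \ref{basic}) is
\[
T(X)T(X^{-1})\equiv\frac{\abs{T}}{\abs{S}^2}\sum_{d\mid N}\abs{S(\ze_N^d)}^2R_{N/d}(X^d)\bmod(X^N-1),
\]
i.e.\ the autocorrelation of $T$ is constant on each divisor class $d\ZZ_N^{\star}$, with constants given by the Fourier coefficients of $S$. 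Because of this, your second step --- that the cyclotomic polynomials $\Phi_d$ dividing $A$ and $B$ partition the divisors of $N$ and that the Coven--Meyerowitz conditions (T1), (T2) follow --- has no basis: formal duality is not a tiling condition of the form $A(X)B(X)\equiv1+X+\dotsb+X^{N-1}$, and in the paper's case analysis (e.g.\ for $N=p^2q^2$) $S$ and $T$ may or may not vanish at the same roots of unity. What the paper actually imports from Coven--Meyerowitz is only the mask-polynomial language, combined with the structure theory of vanishing sums of roots of unity (R\'edei, de Bruijn, Schoenberg, Lam--Leung) and evaluations of first derivatives at roots of unity modulo $N\ZZ[\ze_N]$. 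Similarly, in the self--conjugacy case the operative mechanism is not a pairing of $p$-adic valuations but Turyn's theorem: $p^2\mid\abs{T(\ze_N^d)}^2$ for all $d$ forces $T(\ze_N^d)\equiv0\bmod\mf{P}$ for every prime $\mf{P}$ above $p$, whence $T$ is a union of $p$-cycles, contradicting $\abs{S(\ze_p)}<\abs{S}$ for primitive $S$. So while your road map points at the right landmarks, the specific arguments you propose at each landmark are not the ones that work.
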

 This conjecture has been verified when $N$ is a prime power, by Sch\"uler \cite{Schueler16} for $p$ odd or when $N$ an even
 power of $2$, and by Xia, Park, and Cohn \cite{XPC} in the remaining cases, as well as when $N$ is square-free. 
 
 We briefly mention that there is an infinite family of primitive formally dual sets in non--cyclic groups, the \emph{Gauss sum configurations}. The set $T=\set{(n,n^2):n\in\ZZ_p}\ssq\ZZ_p\times\ZZ_p$
 is primitive formally self-dual \cite{CKRS}. This example has been generalized to the groups $\ZZ_{p^k}\times\ZZ_{p^k}$ \cite{XPC}. It might not be coincidental that this is a \emph{Sidon set} (see Exercise
 2.2.7 in \cite{taovu2006additive}); as we will see in the cyclic case, it seems that such sets can only exist if their sizes both equal $\sqrt{N}$, where $N$ is the order of the group, and their sets of
 differences spread out in the group $G$. We prove in particular that when $N$ is divisible by at most two primes, then every element in $\ZZ_N^{\star}$ (which consists of a large part of the group $\ZZ_N$)
 appears exactly once as difference of the form $t-t'$, where $t,t'\in T$ and $T\ssq\ZZ_N$ primitive, possessing a formally dual set.

 In this paper, we provide ample evidence towards the veracity of Conjecture \ref{mainconj}. In particular, we prove that the conjecture is true when:
 \begin{enumerate}
  \item $N=p^2q^2$, for $p$, $q$ distinct primes.
  \item $N=p^mq^n$, for $p$, $q$ distinct primes, with possibly finitely many exceptions for each pair $(p,q)$.
  \item A prime $p$ divides \emph{exactly} $N$, that is, $p\mid N$, but $p^2\nmid N$, and $p$ is \emph{self-conjugate} $\bmod N$, i.e. there exists $j\in\ZZ$ such that $p^j\equiv-1\bmod N$.
 \end{enumerate}
 
 We should note that tools from different areas were introduced in order to tackle these cases; for case (1), the ideas of Coven-Meyerowitz \cite{CM} for sets that tile $\ZZ$ by translations
 were used; for (2), the so-called \emph{field descent method} was used, that was developed by Schmidt \cite{Schmidt99,SchmidtFDM} chiefly for the circulant Hadamard conjecture, 
 as well as for applications in combinatorial designs and coding theory; for (3), the arithmetic of cyclotomic fields, especially the splitting of primes in cyclotomic extensions.
 
 We will try to keep this paper as self-contained as possible; it is organized as follows: in Section \ref{basicnotation}, we provide the necessary number theoretic background to the problem.
 In Section \ref{polynomial}, we develop the ``polynomial method'', and in Section \ref{structural} we prove basic results with respect to formal duality in cyclic groups.
 The field descent method is introduced in Section \ref{fielddescent}, and in Section \ref{oneprime}, we re-prove the prime power case, showcasing the importance of the new ideas
 involved. In Section \ref{twoprimes}, we apply the field descent method, as well as the polynomial method and use them to prove the conjecture for products of two prime powers, except for finitely many cases
 for every pair $(p,q)$. In Section \ref{beyond}, we prove the conjecture when $p\pdiv N$ and $p$ self--conjugate $\bmod N$. 
 In the appendix we provide some numerical data for case (2) above that indicate how few exceptions
 for each pair $(p,q)$ exist.

 \bigskip
 \section{Basic number theoretic background}\label{basicnotation}
 \bigskip
 
 \subsection{Notation}
 Throughout this paper, we will denote by $\ZZ_N$ the ring of integers modulo $N$, which as an additive group is cyclic of order $N$. We also denote $\ze_N=e^{2\pi i/N}$, a 
 primitive $N$th root of unity. For a function $f:\ZZ_N\rightarrow\CC$, the Fourier
 transform is defined as 
 \[\bs{F}f(y)=\hat{f}(y)=\sum_{x\in\ZZ_N}f(x)\ze_N^{xy}.\]
 For a set $T\ssq\ZZ_N$ we denote by $\bs1_T$ its indicator function.
 Some usual arithmetic functions will be needed here: the number of distinct prime factors of $N$ will be denoted by $\om(N)$ and the product of the distinct prime factors of $N$
 as $\rad(N)$, which is also known as the \emph{radical} of $N$. Then, we have the M\"{o}bius function $\mu(n)$, defined as
 \[\mu(n)=\begin{cases}
           (-1)^{\om(N)},\	\	&\text{ if }N\text{ is square--free}\\
           0,\	\	&\text{ otherwise.}
          \end{cases}
\]
We also set $\Id(N)=N$ and $\bs1(N)=1$ for all $N$, and $e(N)=1$ if $N=1$, while $e(N)=0$ otherwise.
$\vphi(N)$ is the usual \emph{Euler totient function}, which enumerates the positive integers prime to $N$ that are $\leq N$. The following well-known formula holds
\[\vphi(N)=N\prod_{\substack{p\mid N\\p\text{ prime}}}\bra{1-\frac{1}{p}}.\]
Finally, for a prime $p$ we define $\nu_p(N)$ by $p^{\nu_p(N)}\pdiv N$.
The symbol $\ast$ will denote convolution, either additive or multiplicative (i.e. \emph{Dirichlet convolution}), depending on the context. For the classical arithmetic functions,
it will always be multiplicative; for example, the following formulae hold
\[\vphi=\mu\ast\Id,\	\	\Id=\vphi\ast\bs1,\]
an example of \emph{M\"{o}bius inversion}. We also have $f\ast e=f$ for all $f$, that is $e$ is the identity element with respect to the Dirichlet convolution, and
$\bs1\ast\mu=e$, that is, $\mu$ is the inverse of $\bs1$. For these basic facts on arithmetic functions we refer the reader to \cite{Rose}, or any other book on basic number theory.
 
 \subsection{Cyclotomic fields}
 We list some of the basic results on cyclotomic fields, mainly the splitting of primes in cyclotomic extensions of $\QQ$. For these basic facts we refer the reader to
 any of \cite{Marcus,Neukirch,Washington}.
 
 Cyclotomic fields have the form $\QQ(\ze_N)$; we remind that if $N\equiv2\bmod 4$ then $\QQ(\ze_N)=\QQ(\ze_{2N})$, which is also recovered from the fact that the degree of the extension
 satisfies $[\QQ(\ze_N):\QQ)]=\vphi(N)$. This extension is always an Abelian extension, that is, it is Galois with Abelian Galois group. In particular,
 \[\Gal(\QQ(\ze_N)/\QQ)\cong\ZZ_N^{\star},\]
 and the canonical group isomorphism is defined by $g\mapsto \sig_g$ for every $g\in\ZZ_N^{\star}$, where $\sig_g(\ze_N)=\ze_N^g$.
 The ring of integers of the field $\QQ(\ze_N)$ is $\ZZ[\ze_N]$, and every ideal can be factorized uniquely into a product of prime ideals (which are also maximal, as algebraic number rings are
 Dedekind domains). The most important fact for our purposes is the splitting of the ideal $p\ZZ[\ze_N]$ into primes, where $p$ is a (rational) prime.
 
 \begin{thm}\label{splitting}
  Let $N$ be a positive integer, $p$ be a prime and $m$ the $p$-free part of $N$, i.e. $N=p^am$, where $p\nmid m$. Then
  \[p\ZZ[\ze_N]=(\mf{P}_1\mf{P}_2\dotsm\mf{P}_r)^e,\]
  where $\mf{P}_1,\dotsc,\mf{P}_r$ distinct prime ideals of $\ZZ[\ze_N]$, $e=\vphi(p^a)$ the ramification index, and $r=\vphi(m)/f$, where $f$ is the multiplicative order of $p\bmod m$, that is,
  $p^f\equiv1\bmod m$ and $f$ is the smallest positive integer with this property (also called the inertia degree). Furthermore, if we define $\ka(\mf{P})=\ZZ[\ze_N]/\mf{P}$ (the residue field), 
  so that $\ka(p\ZZ)=\ZZ/p\ZZ$, we have $f=[\ka(\mf{P}_i):\ZZ/p\ZZ]$, in other words, the inertia degree is the degree of the residue field extension.
 \end{thm}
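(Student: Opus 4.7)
The plan is to prove this classical theorem by decomposing via the tower $\QQ \subset \QQ(\ze_m) \subset \QQ(\ze_N)$, where $N = p^a m$ with $p \nmid m$, handling the unramified intermediate extension and the totally ramified top extension separately, and then combining them multiplicatively through the tower formulas for ramification and inertia indices.

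For the unramified case $p \nmid N$ (i.e.\ $a = 0$), the plan is to invoke Dedekind's theorem on splitting of primes in monogenic extensions: since $\ZZ[\ze_N]$ is the full ring of integers of $\QQ(\ze_N)$, the factorization of $p\ZZ[\ze_N]$ mirrors the factorization of $\Phi_N(X)$ in $\FF_p[X]$. One then shows $\Phi_N$ splits in $\FF_p[X]$ as a product of $\vphi(N)/f$ distinct monic irreducibles of degree $f$, where $f$ is the order of $p$ mod $N$. The reason is that the roots of $\Phi_N$ in $\ol{\FF}_p$ are the primitive $N$-th roots of unity, and the Frobenius $x \mapsto x^p$ permutes them in orbits of size exactly $f$; distinctness follows from $p \nmid N$, which ensures $\Phi_N$ is separable mod $p$. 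This gives $e = 1$, $r = \vphi(N)/f$, and residue degree $f$.

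For the totally ramified case $N = p^a$, the plan centers on the element $\lambda = 1 - \ze_{p^a}$ and the cyclotomic identity
\[\Phi_{p^a}(1) = \prod_{\substack{1 \le k \le p^a \\ \gcd(k,p) = 1}}(1 - \ze_{p^a}^k) = p.\]
Each factor on the left is a unit multiple of $\lambda$ in $\ZZ[\ze_{p^a}]$, because $(1-\ze_{p^a}^k)/(1-\ze_{p^a}) = 1 + \ze_{p^a} + \dotsm + \ze_{p^a}^{k-1}$ is an algebraic integer, and a symmetric argument handles the inverse. Hence $p$ and $\lambda^{\vphi(p^a)}$ generate the same ideal; taking norms shows $N_{\QQ(\ze_{p^a})/\QQ}(\lambda) = p$, so $(\lambda)$ is prime of norm $p$. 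This yields $p\ZZ[\ze_{p^a}] = (\lambda)^{\vphi(p^a)}$, a single prime with $e = \vphi(p^a)$, $f = 1$.

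To combine, I would stack the two results along $\QQ(\ze_m) \subset \QQ(\ze_N) = \QQ(\ze_m)(\ze_{p^a})$: each of the $r = \vphi(m)/f$ primes above $p$ in $\ZZ[\ze_m]$ becomes totally ramified of degree $\vphi(p^a)$ in $\ZZ[\ze_N]$, and by multiplicativity of $e$ and $f$ in towers we recover the stated factorization. The main obstacle, and the place where cyclotomic-specific input enters nontrivially, is verifying that $\ZZ[\ze_N]$ is genuinely the ring of integers of $\QQ(\ze_N)$ (so Dedekind's theorem applies and no index obstructions arise), and that the totally ramified behavior persists over every prime of $\ZZ[\ze_m]$ above $p$; both follow from the standard discriminant computation $\mathrm{disc}(\ZZ[\ze_N]) \mid N^{\vphi(N)}$ combined with the localization $\ZZ[\ze_m]_{\mf{P}}[\ze_{p^a}]$ being a DVR with uniformizer $1-\ze_{p^a}$. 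The identification $f = [\ka(\mf{P}_i) : \ZZ/p\ZZ]$ is then immediate, as $\ka(\mf{P}_i) = \FF_p(\bar{\ze}_m)$ and the degree of a primitive $m$-th root of unity over $\FF_p$ is precisely the multiplicative order of $p$ mod $m$.
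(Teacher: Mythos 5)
This is one of the background facts the paper explicitly does not prove: Theorem \ref{splitting} is quoted as standard, with the reader referred to Marcus, Neukirch, or Washington, so there is no in-paper argument to compare against. Your sketch is the standard textbook proof and is essentially correct: Dedekind's criterion plus the Frobenius-orbit factorization of $\Phi_N$ over $\FF_p$ handles the unramified part (with separability of $X^N-1$ guaranteed by $p\nmid N$), the identity $\Phi_{p^a}(1)=p$ together with the unit computation for $(1-\ze_{p^a}^k)/(1-\ze_{p^a})$ gives total ramification in the prime-power layer, and the inequality $\sum e_if_i=\vphi(N)$ with $e_i\geq\vphi(p^a)$, $f_i\geq\ord_m(p)$, and at least $\vphi(m)/\ord_m(p)$ primes forces equality everywhere. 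The one place your outline leans on unproved input is the assertion that $\ZZ[\ze_N]$ is the full ring of integers of $\QQ(\ze_N)$: the usual proof of that fact itself proceeds by first establishing the totally ramified prime-power case and then gluing via coprime discriminants (or conductors), so if you write this up you should order the steps to avoid circularity --- prove $\OO_{\QQ(\ze_{p^a})}=\ZZ[\ze_{p^a}]$ from the uniformizer $1-\ze_{p^a}$ first, then deduce the composite statement. With that caveat, the argument is complete in outline and matches what the cited references do.
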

 
 \begin{cor}\label{psplitting}
  With the previous notation, if $N=p^a$ (i.e. $m=1$), then $r=f=1$, and
  \[p\ZZ[\ze_N]=\mf{P}^{\vphi(N)}.\]
  The ideal $\mf{P}$ is principal, and is generated by $1-\ze$, where $\ze$ is any primitive $N$th root of unity.
 \end{cor}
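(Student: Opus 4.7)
The first assertion is a direct specialization of Theorem \ref{splitting}: setting $m=1$ forces $\vphi(m)=1$, and the multiplicative order of $p$ modulo $1$ is $f=1$ trivially, so $r=\vphi(m)/f=1$. Moreover, $e=\vphi(p^a)=\vphi(N)$, which immediately yields $p\ZZ[\ze_N]=\mf{P}^{\vphi(N)}$ for a single prime ideal $\mf{P}$. So the only genuine content is identifying $\mf{P}$ with $(1-\ze)$.

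The plan for the identification is to exhibit an explicit element-level factorization of $p$ in $\ZZ[\ze_N]$ and then apply unique factorization of ideals. Concretely, I would evaluate the $N$th cyclotomic polynomial at $X=1$: using $\Phi_{p^a}(X)=\Phi_p(X^{p^{a-1}})$, one obtains $\Phi_{p^a}(1)=\Phi_p(1)=p$. On the other hand, $\Phi_{p^a}(X)=\prod_{k}(X-\ze^k)$ with $k$ ranging over $(\ZZ/p^a\ZZ)^{\times}$, so
\[
p=\prod_{k\in(\ZZ/p^a\ZZ)^{\times}}(1-\ze^k).
\]

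The next step is to show that every factor on the right is an associate of $1-\ze$. For each such $k$, the quotient $(1-\ze^k)/(1-\ze)=1+\ze+\cdots+\ze^{k-1}$ lies in $\ZZ[\ze]$; conversely, if $k'$ is a multiplicative inverse of $k$ modulo $p^a$, the same identity applied to $(\ze^k,k')$ in place of $(\ze,k)$ shows that $(1-\ze)/(1-\ze^k)\in\ZZ[\ze]$ as well. Hence $(1-\ze^k)$ and $(1-\ze)$ generate the same principal ideal in $\ZZ[\ze_N]$. Taking products over the $\vphi(p^a)$ classes yields the ideal equality $p\ZZ[\ze_N]=(1-\ze)^{\vphi(N)}$, and comparing with $p\ZZ[\ze_N]=\mf{P}^{\vphi(N)}$ combined with unique factorization of ideals in the Dedekind domain $\ZZ[\ze_N]$ forces $\mf{P}=(1-\ze)$.

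There is essentially no obstacle here beyond keeping the bookkeeping straight: the statement is a textbook corollary of Theorem \ref{splitting} together with the classical units-lemma for $(1-\ze^k)/(1-\ze)$. The only place that requires mild care is verifying that the factor $1-\ze$ is indeed non-zero and lies in the prime $\mf{P}$ (which is automatic once one notes that its norm equals $\pm p$, or equivalently, that the factorization of $p$ just derived is non-trivial); everything else is algebra in $\ZZ[\ze]$.
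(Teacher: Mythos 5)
Your proof is correct and matches the paper's (implicit) treatment: the paper states this corollary without proof as a standard fact from its cyclotomic-field references, and immediately afterwards displays exactly the identity $\Phi_N(1)=\prod_{\gcd(g,N)=1}(1-\ze_N^g)=p$ that your argument is built on. The specialization of Theorem \ref{splitting} and the classical units lemma for $(1-\ze^k)/(1-\ze)$ are handled correctly, so nothing further is needed.
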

 
 The prime ideals $\mf{P}_i$ are said to lie above $p$; the Galois group $G=\Gal(\QQ(\ze_N)/\QQ)$ acts transitively on those. For $\mf{P}\mid p$, the subgroup
 \[G_{\mf{P}}=\set{\sig\in G:\sig\mf{P}=\mf{P}}\]
 is called the \emph{decomposition group} of $\mf{P}$. Since $G$ is Abelian, $G_{\mf{P}}$ is the same for all $\mf{P}\mid p$ (in general, these groups are conjugate with each other).
 
 Corollary \ref{psplitting} shows that $1-\ze$ is not a unit in $\ZZ[\ze_N]$, when $N$ is a power of a prime and $\ze$ is a primitive $N$th root of unity, otherwise $(1-\ze)\ZZ[\ze_N]=\ZZ[\ze_N]$.
 This can be seen by taking the value of the cyclotomic polynomial $\Phi_N(X)$ at $X=1$:
 \[\Phi_N(1)=\prod_{\gcd(g,N)=1}(1-\ze_N^g).\]
 As
 \begin{equation}\label{Phivalues}
  \Phi_N(1)=\begin{cases}
             p,\	\	&\text{ if }N\text{ is a power of }p\\
             1,\	\	&\text{ otherwise},
            \end{cases}
 \end{equation}
 we obtain the following Lemma.
 
 \begin{lemma}
  Let $\ze$ be a primitive $N$th root of unity. $1-\ze$ is a unit in $\ZZ[\ze_N]$ if and only if $N$ is not a prime power.
 \end{lemma}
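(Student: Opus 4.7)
The plan is to exploit the formula the authors have just recalled, namely
\[
\Phi_N(1)=\prod_{\gcd(g,N)=1}(1-\ze_N^g),
\]
and the fact that the right-hand side is the norm $N_{\QQ(\ze_N)/\QQ}(1-\ze_N)$, since the Galois conjugates of $1-\ze_N$ are exactly the factors $1-\ze_N^g$ for $g\in\ZZ_N^{\star}$. Since $1-\ze_N$ is an algebraic integer, it is a unit in $\ZZ[\ze_N]$ if and only if its norm is $\pm1$.

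For the ``if'' direction, assume $N$ is not a prime power. Then \eqref{Phivalues} gives $\Phi_N(1)=1$, so the product $\prod_{\gcd(g,N)=1}(1-\ze_N^g)$ equals $1$. Each factor is an algebraic integer in $\ZZ[\ze_N]$, and a product of algebraic integers equal to $1$ forces every factor to be a unit; in particular $1-\ze_N$ is a unit. For any other primitive $N$th root of unity $\ze=\ze_N^g$, applying the automorphism $\sig_g$ takes the unit $1-\ze_N$ to the unit $1-\ze$, which settles this case.

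For the ``only if'' direction, assume $N=p^a$ is a prime power. Corollary \ref{psplitting} asserts precisely that $(1-\ze)\ZZ[\ze_N]=\mf{P}$ is a (proper) prime ideal, so $1-\ze$ cannot be a unit. Alternatively, \eqref{Phivalues} yields $\Phi_N(1)=p$, so the norm of $1-\ze$ is $p\neq\pm1$, contradicting unit-ness.

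The argument is essentially a direct consequence of the identity \eqref{Phivalues} together with the norm interpretation of $\Phi_N(1)$, so no real obstacle is expected; the only subtlety is to remember that the product runs over all primitive $N$th roots of unity (hence equals the norm), so that each factor being an algebraic integer forces each to be a unit whenever the product is $1$.
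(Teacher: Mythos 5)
Your proof is correct and follows essentially the same route as the paper: the paper also derives the lemma directly from the identity $\Phi_N(1)=\prod_{\gcd(g,N)=1}(1-\ze_N^g)$ together with \eqref{Phivalues}, citing Corollary \ref{psplitting} for the prime power case. Your added observations (the norm interpretation and the fact that a product of algebraic integers equal to $1$ makes each factor a unit) just make explicit the details the paper leaves implicit.
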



 
 \bigskip
 \section{The polynomial method}\label{polynomial}
 \bigskip
 
 With every multiset $T$ with elements from $G$ we associate an element of the group ring $\ZZ[G]$, 
 namely $\sum_{g\in G}m_g g$, where $m_g$ is the multiplicity of $g$ in $T$. When $G$ is cyclic, we can write instead $\sum_{g\in G}m_g X^g$, the so-called \emph{mask polynomial}, 
 which is an element of $\ZZ[X]/(X^N-1)\cong\ZZ[G]$, where $N=\abs{G}$. Both notations have appeared in bibliography before; see for example \cite{KaSchmidt,Schmidt99,SchmidtFDM}
 for the group ring notation, or \cite{CM} for the mask polynomial. The former has been advantageous in algebraic coding theory, while the latter in tiling problems
 on $\ZZ$ or the finite cyclic groups, $\ZZ_N$ \cite{MK}. 
 
 In this paper, we will adhere to the polynomial notation; the mask polynomial of the multiset $T$ will be denoted simply by $T(X)$. Now let $d\mid n$, and define $d\cdot T$ to be the multiset
 of elements $dt$ for $t\in T$, \emph{counting multiplicities}. For example, if $T=\set{0,2}\ssq\ZZ_4$, then $2\cdot T=\set{0,0}$, i.e. $0$ has multiplicity $2$ in $2\cdot T$. A fundamental
 observation is:
 \begin{prop}\label{maskdT}
  Let $T$ be a multiset with elements from $\ZZ_N$ and $d\mid N$. Then, $T(X^d)$ is the mask polynomial of the multiset $d\cdot T$.
 \end{prop}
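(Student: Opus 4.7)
The plan is essentially to unfold the definitions carefully and track the reduction modulo $X^N-1$, since the content of the proposition is that multiplication by $d$ on the exponents (in $\ZZ_N$) corresponds to the substitution $X \mapsto X^d$ in the quotient ring $\ZZ[X]/(X^N-1)$.

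First I would write
\[
T(X) = \sum_{g \in \ZZ_N} m_g X^g,
\]
choosing canonical representatives $0 \le g < N$. Substituting $X \mapsto X^d$ gives
\[
T(X^d) = \sum_{g \in \ZZ_N} m_g X^{dg},
\]
which a priori lives in $\ZZ[X]/(X^{N}-1)$ once we reduce exponents modulo $N$ (and here we use that $d \mid N$ only implicitly, via the fact that the image of the reduction lands in the subgroup $d\ZZ_N$, not in order to make the identity hold). Indeed, $X^{dg} \equiv X^{dg \bmod N} \pmod{X^N-1}$, so the coefficient of $X^k$ in $T(X^d) \bmod (X^N-1)$ is
\[
\sum_{\substack{g \in \ZZ_N \\ dg \equiv k \pmod N}} m_g.
\]

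Next I would observe that this last expression is by definition the multiplicity of $k$ in the multiset $d \cdot T$: every element $t \in T$ (counted with multiplicity $m_t$) contributes one copy of $dt \bmod N$ to $d \cdot T$, so the multiplicity of $k$ in $d\cdot T$ is precisely $\sum_{t : dt \equiv k} m_t$. Comparing coefficients term-by-term gives $T(X^d) = (d\cdot T)(X)$ in $\ZZ[X]/(X^N-1)$, which is the claim.

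There is no real obstacle here: the whole statement is a bookkeeping exercise, and the only mildly subtle point worth flagging explicitly is that the reduction $X^{dg} \mapsto X^{dg \bmod N}$ collapses distinct values of $g$ precisely when $dg$ and $dg'$ coincide in $\ZZ_N$, which is exactly when $g$ and $g'$ give the same element of $d \cdot T$. This matches the illustrative example $T = \set{0,2} \ssq \ZZ_4$: $T(X^2) = 1 + X^4 \equiv 2 \pmod{X^4-1}$, recovering $2 \cdot T = \set{0,0}$.
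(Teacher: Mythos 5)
Your proof is correct and follows the same route as the paper: substitute $X\mapsto X^d$, reduce exponents modulo $X^N-1$, and identify the coefficient of $X^h$ as $\sum_{dg\equiv h\bmod N}m_g$, which is the multiplicity of $h$ in $d\cdot T$. The extra remarks on collapsing exponents and the worked example are consistent with, and slightly more explicit than, the paper's one-line verification.
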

 \begin{proof}
  Let $T(X)=\sum_{g\in\ZZ_N}m_gX^g$, where $m_g$ is the multiplicity of $g$, as before. Then,
  \[T(X^d)\equiv\sum_{g\in\ZZ_N}m_gX^{dg}\equiv\sum_{h\in\ZZ_N}\biggl(\sum_{dg\equiv h\bmod N} m_g\biggr)X^h\bmod(X^N-1)\]
  proving the desired fact.
 \end{proof}

 Formal duality induces polynomial congruences $\bmod (X^N-1)$, as we will see in the next section. The following Lemma is then used to show that such a congruence cannot hold, as the values of the
 derivatives of the polynomials under question on roots of unity do not agree, thus proving the non-existence of primitive formally dual sets.
 
 \begin{lemma}\label{deriv}
  Let $P(X),Q(X)\in\ZZ[X]$ such that $P(X)\equiv Q(X)\bmod(X^N-1)$, where $N$ is a positive integer. Then, for every $N$th root of unity $\ze$ (not necessarily primitive) we have
  $P(\ze)=Q(\ze)$ and $P'(\ze)\equiv Q'(\ze)\bmod N\ZZ[\ze_N]$.
 \end{lemma}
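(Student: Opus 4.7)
The plan is to write the congruence $P(X) \equiv Q(X) \pmod{X^N-1}$ as an honest equality $P(X) - Q(X) = (X^N - 1) R(X)$ for some $R(X) \in \ZZ[X]$, and then differentiate. Evaluating at any $N$th root of unity $\zeta$ gives $P(\zeta) - Q(\zeta) = (\zeta^N - 1) R(\zeta) = 0$, which proves the first claim.

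For the derivative statement, I differentiate both sides of $P(X) - Q(X) = (X^N - 1) R(X)$ to get
\[
P'(X) - Q'(X) = N X^{N-1} R(X) + (X^N - 1) R'(X).
\]
Now I evaluate at $\zeta$: the second summand vanishes since $\zeta^N = 1$, leaving
\[
P'(\zeta) - Q'(\zeta) = N \zeta^{N-1} R(\zeta).
\]
Since every $N$th root of unity lies in $\QQ(\zeta_N)$ and is integral over $\ZZ$, we have $\zeta \in \ZZ[\zeta_N]$, and therefore $\zeta^{N-1} R(\zeta) \in \ZZ[\zeta_N]$. It follows that $P'(\zeta) - Q'(\zeta) \in N \ZZ[\zeta_N]$, i.e. $P'(\zeta) \equiv Q'(\zeta) \pmod{N\ZZ[\zeta_N]}$.

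There is no real obstacle here; the only point that requires a sentence of justification is that the polynomial identity $P(X) - Q(X) = (X^N - 1) R(X)$ holds over $\ZZ[X]$ (not merely $\QQ[X]$), which follows because $X^N - 1$ is monic, so polynomial division of $P - Q$ by $X^N - 1$ stays inside $\ZZ[X]$. Once that is noted, both conclusions drop out immediately from substitution and differentiation.
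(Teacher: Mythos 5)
Your proof is correct and follows essentially the same route as the paper: write $P(X)-Q(X)=(X^N-1)R(X)$ with $R\in\ZZ[X]$, evaluate for the first claim, and differentiate via the product rule for the second. The only addition is your (welcome) remark that $R$ lies in $\ZZ[X]$ because $X^N-1$ is monic, a point the paper takes for granted.
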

 
 \begin{proof}
  This almost follows from definition; let $R(X)\in\ZZ[X]$ be such that $P(X)-Q(X)=(X^N-1)R(X)$. From this, we readily have $P(\ze)=Q(\ze)$, when $\ze^N=1$. Differentiating both
  sides, we obtain $P'(X)-Q'(X)=(X^N-1)R'(X)+NX^{N-1}R(X)$, so substituting $X=\ze$ gives $P'(\ze)\equiv Q'(\ze)\bmod N\ZZ[\ze_N]$.
 \end{proof}
 
 The following polynomials will appear a lot in the sequel.
 
 \begin{prop}\label{deriv2}
  Let $d\mid N$ be positive integers. Consider the function
  \[F(X)=\sum_{k=0}^{N/d-1}X^{dk},\]
  which we will also write formally as $\frac{X^N-1}{X^d-1}$, even at points where $X^d-1=0$. Then, 
  \[F'(\ze)\equiv\begin{cases}
                  0\bmod N\ZZ[\ze_N],\	\	&\text{ if }\ze^d=1\text{ and }N/d\text{ is odd,}\\
                  \frac{N}{2}\ze^{-1}\bmod N\ZZ[\ze_N],\	\	&\text{ if }\ze^d=1\text{ and }N/d\text{ is even}\\
                  \frac{N\ze^{-1}}{\ze^d-1}\bmod N\ZZ[\ze_N],\	\	&\text{ otherwise.}
                 \end{cases}
\]
 \end{prop}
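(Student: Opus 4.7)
The plan is to differentiate $F$ directly, then split according to whether $\zeta^d=1$ or not. When $\zeta^d\neq 1$, I will exploit the identity $(X^d-1)F(X)=X^N-1$; when $\zeta^d=1$, the geometric-sum trick fails (the denominator vanishes), so I will instead evaluate the explicit sum $F'(X)=\sum_{k=1}^{N/d-1}dk X^{dk-1}$ term by term, and reduce modulo $N$ using parity.

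First I would record
\[
F'(\zeta)=d\zeta^{-1}\sum_{k=1}^{N/d-1}k(\zeta^d)^k,
\]
valid for every $N$th root of unity $\zeta$. In the case $\zeta^d=1$, the inner sum collapses to $\sum_{k=1}^{N/d-1}k=\frac{1}{2}(N/d)(N/d-1)$, giving
\[
F'(\zeta)=\tfrac{N}{2}\bra{\tfrac{N}{d}-1}\zeta^{-1}.
\]
If $N/d$ is odd then $(N/d-1)/2\in\ZZ$, so $F'(\zeta)$ is an integer multiple of $N$ times $\zeta^{-1}$, hence $\equiv 0\pmod{N\ZZ[\zeta_N]}$. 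If $N/d$ is even, writing $N/d-1=2m+1$ yields $\tfrac{N}{2}(2m+1)\zeta^{-1}=mN\zeta^{-1}+\tfrac{N}{2}\zeta^{-1}\equiv\tfrac{N}{2}\zeta^{-1}\pmod{N\ZZ[\zeta_N]}$, matching the second line.

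For the remaining case $\zeta^d\neq 1$, I would differentiate the identity $(X^d-1)F(X)=X^N-1$ to obtain
\[
(X^d-1)F'(X)+dX^{d-1}F(X)=NX^{N-1}.
\]
Evaluating at $X=\zeta$ and using $F(\zeta)=(\zeta^N-1)/(\zeta^d-1)=0$ together with $\zeta^{N-1}=\zeta^{-1}$ gives the equality $F'(\zeta)=N\zeta^{-1}/(\zeta^d-1)$ in the field $\QQ(\zeta_N)$; since the left-hand side lies in $\ZZ[\zeta_N]$, so does the right-hand side, and the claimed congruence modulo $N\ZZ[\zeta_N]$ follows (in fact as an equality).

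I do not foresee a real obstacle here; the only delicate point is the parity analysis in the case $\zeta^d=1$, where I must be careful that $\tfrac{N}{2}(N/d-1)$ is interpreted correctly as an element of $\ZZ$ (not a rational) so that the reduction modulo $N$ is meaningful, and that the identification $\zeta^{N-1}=\zeta^{-1}$ is used consistently so the three cases fit together into a single uniform derivative formula.
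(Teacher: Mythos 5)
Your proposal is correct and follows essentially the same route as the paper: the explicit term-by-term evaluation with the parity analysis when $\ze^d=1$, and an algebraic identity for $F$ when $\ze^d\neq1$ (you differentiate the product form $(X^d-1)F(X)=X^N-1$ where the paper applies the quotient rule to $\frac{X^N-1}{X^d-1}$, which is the same computation). No gaps.
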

 
 \begin{proof}
  Obviously 
  \[F'(X)=d\sum_{k=1}^{N/d-1}kX^{dk-1},\]
  hence for $\ze^d=1$,
  \[F'(\ze)=d\ze^{-1}\frac{1}{2}\cdot\frac{N}{d}\bra{\frac{N}{d}-1}=\ze^{-1}\frac{N}{2}\bra{\frac{N}{d}-1},\]
  and the result follows easily when $\ze^d=1$. In all other cases, we use
  \[F'(X)=\bra{\frac{X^N-1}{X^d-1}}'=-\frac{dX^{d-1}(X^N-1)}{(X^d-1)^2}+\frac{NX^{N-1}}{X^d-1},\]
  whence the case $\ze^d\neq1$, $\ze^N=1$, follows.
 \end{proof}
 
 The mask polynomial of $\ZZ_N^{\star}$ will be denoted by $R_N(X)$, and appears prominently in the polynomial congruences induced by formal duality. By definition,
 \[R_N(X)=\sum_{\substack{1\leq g\leq N\\ \gcd(g,N)=1}}X^g,\]
 and the values of $R_N$ at $N$th roots of unity are the \emph{Ramanujan sums}, denoted by
 \[C_N(d)=R_N(\ze_N^d).\]
 As Ramanujan proved \cite{Ramanujansums} (see also \cite{HardyWright}), these sums are integers, and their values are given by the following formula
 \begin{equation}\label{Rsum}
 C_N(d)=\sum_{g\mid\gcd(d,N)}\mu(\frac{N}{g})g.
 \end{equation}
 These sums will appear a lot when we apply Proposition \ref{basic}, so we will also need the following \cite{HardyWright} ($d$, $N$ are integers).
  \begin{equation}\label{calcrsums}
   C_N(d)=\mu\bra{\frac{N}{\gcd(d,N)}}\frac{\vphi(N)}{\vphi\bra{\frac{N}{\gcd(d,N)}}}.
  \end{equation}
 
 
  
 
 \begin{lemma}\label{Rderivatives}
  Consider the polynomial $R_N(X)$, the mask polynomial of $\ZZ_N^{\star}$. Let $d\mid N$ and $\ze$ a primitive $d$th root of unity. Then
  \begin{equation}\label{Rderiv1}
  R'_N(\ze)\equiv N\ze^{-1}\sum_{g\mid N, d\nmid g}\frac{\mu(g)}{\ze^g-1}\bmod N\ZZ[\ze_N],
  \end{equation}
  unless $4\mid N$ and $d=\rad(N)$ or $2\pdiv N$ and $d=\rad(\frac{N}{2})$, in which cases
  \begin{equation}\label{Rderiv2}
  R'_N(\ze)\equiv N\ze^{-1}\ent{\frac{1}{2}+\sum_{g\mid N, d\nmid g}\frac{\mu(g)}{\ze^g-1}}\bmod N\ZZ[\ze_N]
  \end{equation}
  holds.
 \end{lemma}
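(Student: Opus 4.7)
The plan is to express $R_N(X)$ via a Möbius-type decomposition in terms of the polynomials $F_g(X) = (X^N-1)/(X^g-1)$ studied in Proposition \ref{deriv2}, then differentiate and reduce case by case.

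First I will use the standard identity $\bs1_{\ZZ_N^\star}(k) = \sum_{g \mid \gcd(k,N)} \mu(g) = e(\gcd(k,N))$. Swapping the order of summation gives
\[
R_N(X) = \sum_{k\in\ZZ_N}\bs1_{\ZZ_N^\star}(k)X^k \equiv \sum_{g\mid N}\mu(g)\sum_{g\mid k}X^k \equiv \sum_{g\mid N}\mu(g)F_g(X)\pmod{X^N-1},
\]
where $F_g(X) = \sum_{k=0}^{N/g-1}X^{gk}$. Applying Lemma \ref{deriv} to this polynomial congruence and evaluating at $\ze$, which satisfies $\ze^N=1$, I obtain
\[
R'_N(\ze) \equiv \sum_{g\mid N}\mu(g)F'_g(\ze) \pmod{N\ZZ[\ze_N]}.
\]

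Next I plug in the three-case formula from Proposition \ref{deriv2}, noting that $\ze^g = 1$ is equivalent to $d \mid g$ since $\ze$ is primitive of order $d$. Only square-free $g$ survive because of the $\mu(g)$ factor. The terms with $d\nmid g$ immediately produce $N\ze^{-1}\sum_{g\mid N,\,d\nmid g}\mu(g)/(\ze^g-1)$, which is the main term in both formulas of the lemma. The terms with $d\mid g$ and $N/g$ odd vanish, so the remaining task is to compute the residual contribution
\[
\Theta := \tfrac{N\ze^{-1}}{2}\sum_{g\in S}\mu(g), \qquad S=\{g\mid N:\ g\text{ square-free},\ d\mid g,\ N/g\text{ even}\}.
\]

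The main work is the combinatorial evaluation of $\sum_{g\in S}\mu(g)$, which I expect to be the main obstacle because the parity constraint on $N/g$ interacts nontrivially with $\nu_2(N)$. Writing $N=2^a m$ with $m$ odd and noting that square-freeness forces $\nu_2(g)\le 1$, a membership in $S$ boils down to: (i) if $a=0$, then $S=\varnothing$; (ii) if $a\ge 2$, then $S$ consists of all square-free multiples of $d$ dividing $\rad(N)$; (iii) if $a=1$, then $S$ consists of all square-free multiples of $d$ dividing $\rad(m)=\rad(N/2)$. In cases (ii) and (iii), provided $d$ itself is square-free and divides the relevant radical $\rho\in\{\rad(N),\rad(N/2)\}$, one has $S=\{d t: t\mid \rho/d\}$, so $\sum_{g\in S}\mu(g)=\mu(d)\,e(\rho/d)$, which vanishes unless $d=\rho$. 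Hence $\sum_{g\in S}\mu(g)\in\{0,\pm 1\}$, and it is odd precisely in the two exceptional configurations of the lemma: $4\mid N$ with $d=\rad(N)$, or $2\pdiv N$ with $d=\rad(N/2)$. In both exceptional cases the sum equals $\pm 1$, and since $\tfrac{N}{2}\cdot(\pm 1)\equiv \tfrac{N}{2}\pmod{N\ZZ[\ze_N]}$, we get $\Theta\equiv \tfrac{N\ze^{-1}}{2}$, contributing the extra $\tfrac{1}{2}$ term in \eqref{Rderiv2}; in all other cases $\Theta\equiv 0$ and \eqref{Rderiv1} holds as stated.
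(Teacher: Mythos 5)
Your proof is correct and follows essentially the same route as the paper: the M\"obius decomposition $R_N(X)\equiv\sum_{g\mid N}\mu(g)F_g(X)\bmod(X^N-1)$, termwise differentiation via Proposition \ref{deriv2}, and a case split on $\nu_2(N)$ to evaluate the residual $\tfrac{N}{2}$-contributions from the divisors $g$ with $d\mid g$. The only cosmetic difference is that you evaluate the signed sum $\sum_{g\in S}\mu(g)$ in closed form as $\mu(d)\,e(\rho/d)$, whereas the paper argues by counting the parity of the number of contributing square-free divisors; the two computations are equivalent.
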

 
 \begin{proof}
  The polynomial $R_N(X)$ is a sum of polynomials of the same form as $F(X)$ in Proposition \ref{deriv2}, for various $d\mid N$. In particular, we may formally write
  \[R_N(X)=\sum_{g\mid N}\mu(g)\frac{X^N-1}{X^g-1}.\]
  Let $d\mid N$ and $\ze^d=1$. Assume first that $N$ is odd; if $d\mid g$, then also $\ze^g=1$ and the derivative of $\frac{X^N-1}{X^g-1}$ at $X=\ze$ is $0$ by Proposition \ref{deriv2}, so the only terms
  that will appear in $R'_N(\ze)$ are those satisfying $d\nmid g$, proving that \eqref{Rderiv1} holds.
  
  Next, assume that $N$ is even. Again, by Proposition \ref{deriv2} the contribution of the terms satisfying $d\nmid g$ to $R'_N(\ze)$ is precisely $\mu(g)\frac{N\ze^{-1}}{X^g-1}\bmod N\ZZ[\ze_N]$.
  So, assume that $d\mid g$, so that $\ze^g=1$; without loss of generality, both $d$ and $g$ are square--free, otherwise $\mu(g)=0$ and the contribution is also $0$ anyway. 
  If $4|N$, then $N/g$ is always even for every square--free $g$, so if $d\neq\rad(N)$ there is precisely an even number of square--free $g\mid N$ for which $d\mid g$ (equal to the number of divisors of
  $\frac{\rad(N)}{d}$ when $d\mid \rad(N)$ and $0$ otherwise), hence their total contribution to $R'_N(\ze)$ is $0$ by Proposition \ref{deriv2} and \eqref{Rderiv1} holds. 
  If, on the other hand, $d=\rad(N)$, there is only one such contribution of the form $\frac{N}{2}\ze^{-1}\bmod N\ZZ[\ze_N]$ by Proposition \ref{deriv2} (namely, from $g=\rad(N)$), hence \eqref{Rderiv2} holds.
  If $2\mid N$ and the square--free $g\mid N$ is even, then the contribution is $0$ to $R'_N(\ze)$ by Proposition \ref{deriv2}. So, let $g$ be odd with $d\mid g$. Unless $d=\rad(\frac{N}{2})$, there is an 
  even number of odd square--free divisors $g$ divisible by $d$, so their total contribution is $0$ and \eqref{Rderiv1} holds. When $d=\rad(\frac{N}{2})$, the only contribution of 
  $\frac{N}{2}\ze^{-1}\bmod\ZZ[\ze_N]$ comes from $g=d$, hence \eqref{Rderiv2} holds, concluding the proof.
 \end{proof}
 
 \begin{cor}
  Let $H(X)=R_{N/d}(X^d)$, where $d\mid N$. Then, if $\ze^N=1$
  \[H'(\ze)\equiv N\ze^{-1}\ent{\frac{\eps}{2}+\sum_{g\mid N, \de\nmid g}\frac{\mu(g)}{\ze^{dg-1}}}\bmod N\ZZ[\ze_N]\]
  holds, where $\ze^d$ is a primitive $\de$th root of unity, and
  \[\eps=\begin{cases}
          1,\	\	&\text{ if }4\mid \frac{N}{d}\text{ and }\de=\rad(N/d)\text{ or }2\pdiv \frac{N}{d}\text{ and }\de=\rad(N/2d)\\
          0,\	\	&\text{ otherwise.}
         \end{cases}
\]
 \end{cor}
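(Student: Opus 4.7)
The plan is to derive the corollary as a direct chain-rule consequence of Lemma \ref{Rderivatives}. Since $H(X) = R_{N/d}(X^d)$, the chain rule gives $H'(X) = d\,X^{d-1}\,R'_{N/d}(X^d)$, so the task reduces to evaluating $R'_{N/d}$ at the point $\eta := \ze^d$. Because $\ze^N = 1$ we have $\eta^{N/d} = 1$, and by definition $\eta$ is a primitive $\de$th root of unity with $\de \mid N/d$. I can therefore apply Lemma \ref{Rderivatives} verbatim, with $N/d$ in place of $N$ and $\eta$ in place of $\ze$, to obtain a congruence
\[R'_{N/d}(\eta) \equiv \frac{N}{d}\,\eta^{-1}\left[\frac{\eps}{2} + \sum_{\substack{g \mid N/d \\ \de \nmid g}} \frac{\mu(g)}{\eta^g - 1}\right] \bmod \tfrac{N}{d}\,\ZZ[\ze_{N/d}],\]
which a fortiori holds modulo $\tfrac{N}{d}\,\ZZ[\ze_N]$ since $\ZZ[\ze_{N/d}] \subseteq \ZZ[\ze_N]$.

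The second step is to multiply both sides of the above congruence by $d\,\ze^{d-1}$. The left-hand side becomes $H'(\ze)$ by the chain-rule formula. On the right-hand side, the scalar prefactor simplifies as
\[d\,\ze^{d-1} \cdot \frac{N}{d}\,\eta^{-1} \;=\; N\,\ze^{d-1}\,\ze^{-d} \;=\; N\,\ze^{-1},\]
and each term $(\eta^g - 1)^{-1}$ becomes $(\ze^{dg} - 1)^{-1}$. Crucially, multiplication by $d$ upgrades the modulus from $\tfrac{N}{d}\,\ZZ[\ze_N]$ to $N\,\ZZ[\ze_N]$, which is exactly what is required.

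The only point requiring a moment of care is the identification of the $\eps$-cases. Applying Lemma \ref{Rderivatives} with $N/d$ in place of $N$ forces the exceptional conditions to become ``$4 \mid N/d$ and $\de = \rad(N/d)$'' and ``$2 \pdiv N/d$ and $\de = \rad(N/(2d))$'', which match the cases listed for $\eps$ in the corollary. No substantive obstacle arises, and I do not foresee any hard step: the proof is essentially a bookkeeping exercise around the chain rule and the modulus lift from $N/d$ to $N$ provided by the extra factor of $d$.
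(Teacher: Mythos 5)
Your proof is correct and follows exactly the route the paper intends: the corollary is stated without proof immediately after Lemma \ref{Rderivatives}, and the chain rule $H'(X)=dX^{d-1}R'_{N/d}(X^d)$ combined with that lemma applied to $N/d$ and $\eta=\ze^d$ is the whole argument, including the modulus upgrade from $\tfrac{N}{d}\ZZ[\ze_{N/d}]$ to $N\ZZ[\ze_N]$ via the extra factor $d$. One remark you should have flagged: your derivation yields the sum over $g\mid N/d$ with $\de\nmid g$, not over $g\mid N$ as printed; a direct check (e.g.\ $N=12$, $d=4$, $\ze=\ze_{12}$, where the two ranges give genuinely different values modulo $12\ZZ[\ze_{12}]$) shows the printed range is a typo --- as is the exponent $\ze^{dg-1}$, which should read $\ze^{dg}-1$ --- and your range $g\mid N/d$ is the correct one.
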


 When $T(X)$ vanishes on a certain $N$th root of unity, we get some information about the structure of $T\ssq\ZZ_N$. This follows from a theorem on the vanishing sums of roots of unity,
 independently proven by R\'edei \cite{Redei50,Redei54}, de Bruijn \cite{deB53} and Schoenberg \cite{Sch64}. When we consider vanishing sums of $N$th roots of unity where $N$ has at most two prime
 divisors (which are most of the cases that we consider in this paper), there is a stronger result by Lam and Leung \cite{LL}. We summarize this in the following theorem.
 
 \begin{thm}\label{vanishingsums}
  Suppose that $\sum_{j=0}^{N-1}c_j\ze_N^j=0$ for some integers $c_j$, $0\leq j\leq N-1$. If we consider $\ze_N$ formally as an element of $\ZZ[G]$, where $G=\scal{\ze_N}$, 
  then $\sum_{j=0}^{N-1}c_j\ze_N^j$ is equal to an integer linear combination of terms of the form
  \begin{equation}\label{pcycle}
  \ze_N^k(1+\ze_p+\ze_p^2+\dotsb+\ze_p^{p-1}),
  \end{equation}
  for $p$ prime divisor of $N$ and $k$ integer (these terms are called $p$-cycles). If $N$ has at most two distinct prime divisors and $c_j\geq0$ for all $j$, then we can write $\sum_{j=0}^{N-1}c_j\ze_N^j$
  as a nonnegative linear combination of terms such as the above.
 \end{thm}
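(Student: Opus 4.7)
The plan is to work in the group ring $\ZZ[G] \cong \ZZ[X]/(X^N-1)$ with $G = \scal{\ze_N}$, identifying $\ze_N$ with the class of $X$. Under this identification a $p$-cycle becomes the polynomial $X^k \Phi_p(X^{N/p})$, and the hypothesis $\sum_j c_j \ze_N^j = 0$ in $\CC$ translates, via the isomorphism $\ZZ[\ze_N] \cong \ZZ[X]/(\Phi_N(X))$, to the divisibility $\Phi_N(X) \mid T(X)$ in $\ZZ[X]$, where $T(X) = \sum_j c_j X^j$; equivalently, $T$ lies in the ideal $(\Phi_N(X))$ of $\ZZ[X]/(X^N-1)$.

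For the first assertion I would induct on $\om(N)$. The base case $\om(N)=1$, i.e.\ $N=p^a$, is immediate: $\Phi_{p^a}(X)=\Phi_p(X^{p^{a-1}})$ is itself a $p$-cycle (with $k=0$), so writing $T(X)=\Phi_{p^a}(X)S(X)$ with $S(X)=\sum_k s_k X^k$ and reducing modulo $X^N-1$ yields $T(X)\equiv\sum_k s_k\cdot X^k\Phi_p(X^{N/p})\pmod{X^N-1}$, a $\ZZ$-linear combination of $p$-cycles. For $\om(N)\geq2$ it suffices to show that $\Phi_N(X)$ itself lies in the ideal generated by $\{\Phi_p(X^{N/p}):p\mid N\}$ inside $\ZZ[X]/(X^N-1)$, since $T$ is a multiple of $\Phi_N(X)$. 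The crucial point is that every monomial $X^j$ is a unit modulo $X^N-1$, which allows Bezout-type identities that fail in $\ZZ[X]$; for instance $\Phi_6(X)\equiv X^4(\Phi_3(X^2)-\Phi_2(X^3))\pmod{X^6-1}$. In general a recursive reduction, peeling off one prime at a time, produces the required expression.

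For the second assertion (Lam--Leung), assume $N=p^a q^b$ and all $c_j\geq 0$, and induct on $\sum_j c_j$. The pivotal step is a \emph{cycle-extraction} lemma: for any nonzero $T(X)$ with $T(\ze_N)=0$ and nonnegative coefficients there exists either a $p$-cycle or a $q$-cycle $C(X)$ such that $T(X)-C(X)$ still has nonnegative coefficients. The proof exploits that with only two primes the vanishing forces a clean structure: grouping the support of $T$ into cosets of $\scal{\ze_N^{N/p}}$ presents $T(\ze_N)$ as a sum, one contribution per coset, of vanishing sums of $q^b$-th roots of unity, which by the prime-power base case are nonnegative combinations of $q$-cycles; if no $q$-cycle can be extracted globally in this way, the symmetric analysis on the $p$-side locates a $p$-cycle instead. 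The induction terminates at $T\equiv 0$.

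The main obstacle lies in this second assertion. The first part reduces to a polynomial-divisibility calculation once the ideal-theoretic reformulation is in place, but the positivity constraint in Lam--Leung demands a genuinely combinatorial case analysis to locate the extractable cycle at every stage. Moreover, the two-prime restriction is sharp: classical counterexamples at $N=30$ (and more generally whenever $\om(N)\geq 3$) exhibit vanishing nonnegative sums admitting no nonnegative decomposition into $p$-cycles, so the argument must truly use $\om(N)\leq 2$. The bulk of the effort would be in establishing the cycle-extraction lemma, which is the heart of the Lam--Leung argument.
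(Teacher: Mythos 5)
The paper does not actually prove Theorem \ref{vanishingsums}: it is quoted from the literature (R\'edei, de Bruijn and Schoenberg for the first assertion, Lam--Leung for the second), so your attempt has to be judged on its own. Your reduction of the first assertion is sound as far as it goes: the kernel of $\ZZ[X]/(X^N-1)\rightarrow\ZZ[\ze_N]$ is the principal ideal generated by the class of $\Phi_N(X)$, so the whole claim is equivalent to the single membership of $\Phi_N(X)$ in the ideal generated by the $\Phi_p(X^{N/p})$, and the prime power base case really is immediate because $\Phi_{p^a}(X)=\Phi_p(X^{p^{a-1}})$. But for $\om(N)\geq2$ you verify this membership only for $N=6$ and then assert that ``a recursive reduction, peeling off one prime at a time, produces the required expression''; that sentence \emph{is} the R\'edei--de Bruijn--Schoenberg theorem and is left unproved. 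Some structural input is genuinely needed here, e.g.\ the decomposition of the group ring of $\ZZ_N$ as the tensor product of the group rings of its Sylow subgroups together with the fact that the kernel of a tensor product of surjections of free modules is generated by the kernels of the factors.

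The more serious gap is in the cycle-extraction lemma. The induction on $\sum_jc_j$ is indeed a correct way to organize Lam--Leung, but your proof of the extraction step rests on a false claim: grouping the support of $T$ into cosets of a cyclic subgroup does \emph{not} present $T(\ze_N)$ as a sum of individually vanishing partial sums. Already for $N=6$ the relation $1+\ze_6^3=0$ distributes its two terms over two distinct cosets of $\scal{\ze_6^2}$, and neither coset sum vanishes. What the coset decomposition actually yields, writing $\ze_N=\ze_{p^a}\ze_{q^b}$ and $T=\sum_{i=0}^{p^a-1}\ze_{p^a}^iS_i(\ze_{q^b})$ with $S_i\in\ZZ_{\geq0}[X]$, is the single relation $\sum_i\ze_{p^a}^iS_i(\ze_{q^b})=0$ over $\ZZ[\ze_{q^b}]$; expanding in the basis $1,\ze_{p^a},\dotsc,\ze_{p^a}^{\vphi(p^a)-1}$ and using $\sum_{j=0}^{p-1}\ze_{p^a}^{jp^{a-1}}=0$, one learns only that the values $S_i(\ze_{q^b})$ are constant along the progressions $i,\,i+p^{a-1},\dotsc,i+(p-1)p^{a-1}$, not that they vanish. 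Extracting an actual $p$- or $q$-cycle from this weaker information while preserving nonnegativity (the values being equal does not even force the supports of the $S_i$ to meet) is precisely the combinatorial heart of Lam--Leung, and the ``symmetric analysis on the $p$-side'' does not supply it. As you yourself concede that the bulk of the effort lies in this lemma, the proposal should be regarded as an outline with its two key steps missing rather than a proof.
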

 
 The second part of this theorem does not hold when $N$ has at least three distinct prime divisors, as is evident from the example
 \[(\ze_p+\dotsb+\ze_p^{p-1})(\ze_q+\dotsb+\ze_q^{q-1})+(\ze_r+\dotsb+\ze_r^{r-1})=(-1)(-1)+(-1)=0,\]
 which cannot be written as a nonnegative linear combination of $p$-, $q$- or $r$-cycles \cite{LL}.
 
 Expressed with the polynomial notation, Theorem \ref{vanishingsums} gives the results below; in the two prime case we include an additional fact that will be useful for our purposes (see also \cite{MK}):
 
 \begin{thm}\label{pqrcycles}
  Let $T\ssq\ZZ_N$, such that $T(\ze_N^d)=0$ for some $d\mid N$, where $p_1,\dotsc,p_k$ are the distinct prime divisors of $N$. Then
  \[T(X^d)\equiv\sum_{j=1}^k P_j(X^d)\Phi_N(X^{N/p_j})\bmod(X^N-1),\]
  for some $P_j\in\ZZ[X]$.
 \end{thm}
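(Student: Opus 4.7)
The plan is to reduce the problem to Theorem \ref{vanishingsums} via the substitution $Y = X^d$, exploiting the fact that $T(X^d) \bmod (X^N - 1)$ lies in the subring $\ZZ[X^d]/(X^N - 1)$. The key structural observation is that, setting $M = N/d$, the ring map $Y \mapsto X^d$ induces an isomorphism
\[\ZZ[Y]/(Y^M - 1) \cong \ZZ[X^d]/(X^N - 1) \subseteq \ZZ[X]/(X^N - 1),\]
since $Y^M - 1 \mapsto X^{dM} - 1 = X^N - 1$. Accordingly, I would first define $\wt{S}(Y) := T(Y) \bmod (Y^M - 1)$, the reduction of the mask polynomial $T(Y)$ modulo $Y^M - 1$. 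Since $T$ has $\{0,1\}$-coefficients, $\wt{S}$ has nonnegative integer coefficients, and the hypothesis $T(\ze_N^d) = 0$ translates (via $\ze_N^d = \ze_M$) into the vanishing sum $\wt{S}(\ze_M) = 0$ in $\ZZ[\ze_M]$.

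Next I would invoke Theorem \ref{vanishingsums} on this vanishing sum: since $\wt{S}$ has nonnegative integer coefficients and vanishes at $\ze_M$, we obtain a decomposition of $\wt{S}(Y)$ modulo $Y^M - 1$ as an integer linear combination of $p$-cycles $Y^k \Phi_p(Y^{M/p})$ indexed by primes $p \mid M$. Collecting terms with a common prime $p$ yields
\[\wt{S}(Y) \equiv \sum_{p \mid M} Q_p(Y)\, \Phi_p(Y^{M/p}) \bmod (Y^M - 1),\]
for suitable $Q_p \in \ZZ[Y]$.

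Finally, substituting $Y = X^d$ (so that $Y^{M/p} = X^{N/p}$) transports this congruence into $\ZZ[X]/(X^N - 1)$, giving
\[T(X^d) \equiv \wt{S}(X^d) \equiv \sum_{p \mid M} Q_p(X^d)\, \Phi_p(X^{N/p}) \bmod (X^N - 1).\]
Setting $P_p := 0$ for each prime $p \mid N$ with $p \nmid M$ extends the sum over all prime divisors of $N$, producing the claimed decomposition. The proof is fairly short; the only conceptual point is that a direct application of Theorem \ref{vanishingsums} to $T(X^d) \bmod (X^N - 1)$ would yield coefficient polynomials in $X$ rather than in $X^d$, so the trick is to apply it first in the smaller ring $\ZZ[Y]/(Y^M - 1)$, where the required form $P_j(X^d)$ emerges automatically upon reinterpreting through $Y = X^d$.
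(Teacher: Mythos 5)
Your proof is correct and follows essentially the same route as the paper's: both reduce the statement to Theorem \ref{vanishingsums} via the correspondence between $p$-cycles and the polynomials $X^k\Phi_p(X^{N/p})$ under the identification $\ZZ[X]/(X^N-1)\cong\ZZ[G]$. The paper's proof is a one-line remark that leaves implicit the step you spell out, namely applying the vanishing-sums theorem at level $M=N/d$ and then substituting $Y=X^d$ so that the coefficient polynomials genuinely come out as polynomials in $X^d$; your version is, if anything, the more complete account of why the stated form holds.
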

 \begin{proof}
  We simply remark that $X^k\Php$ is the mask polynomial that corresponds to the term in \eqref{pcycle} under the canonical ring isomorphism $\ZZ[X]/(X^N-1)\cong\ZZ[G]$ with $G=\scal{\ze_N}$, 
  which identifies $X$ with $\ze_N$. The rest follows from Theorem \ref{vanishingsums}.
 \end{proof}
 
 We emphasize that the polynomials $P_j$ are not unique.
 
 \begin{thm}\label{pqcycles}
  Let $T\ssq\ZZ_N$, such that $T(\ze_N^d)=0$ for some $d\mid N$, and $N/d$ has at most two distinct prime divisors, say $p$, $q$. Then,
  \begin{equation}\label{pqroots}
  T(X^d)\equiv P(X^d)\Php+Q(X^d)\Phq\bmod(X^N-1).
  \end{equation}
  The polynomials $P,Q\in\ZZ[X]$ can be taken with nonnegative coefficients. If for some integer $a>0$ we have $dp^a\mid N$ (resp. $dq^a\mid N$) and $T(\ze_N^{dp^a})\neq0$ (resp. $T(\ze_N^{dq^a})\neq0$), then
  $P\not\equiv0$ (resp. $Q\not\equiv0$) for any selection of $Q$ (resp. $P$).
 \end{thm}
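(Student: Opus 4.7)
The plan is to reduce the problem from $\ZZ_N$ to $\ZZ_{N/d}$, apply the Lam--Leung refinement in the second half of Theorem \ref{vanishingsums}, and then pull back via the substitution $X \mapsto X^d$. The three assertions of Theorem \ref{pqcycles} will each emerge from one of these steps, with no serious obstacle besides bookkeeping.

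First, set $M = N/d$. By Proposition \ref{maskdT}, the polynomial $T(X^d) \bmod (X^N-1)$ is the mask polynomial of the multiset $d\cdot T$, all of whose elements are multiples of $d$; consequently $d\cdot T$ lies in the subgroup $d\ZZ_N \cong \ZZ_M$. Let $S(X) \in \ZZ[X]/(X^M-1)$ be its mask polynomial as a multiset in $\ZZ_M$, so $S$ has nonnegative coefficients and $S(\ze_M) = T(\ze_N^d) = 0$. Since $M$ has at most two distinct prime divisors, the strengthened conclusion of Theorem \ref{vanishingsums} expresses $S(\ze_M) = 0$ as a \emph{nonnegative} integer combination of $p$-cycles and $q$-cycles in $\ZZ_M$, which in polynomial form reads
\[ S(X) \equiv P(X)\Phi_p(X^{M/p}) + Q(X)\Phi_q(X^{M/q}) \pmod{X^M-1}, \]
with $P,Q \in \ZZ[X]$ having nonnegative coefficients.

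Next, I would substitute $X \mapsto X^d$, which preserves congruences modulo $(X^d)^M - 1 = X^N - 1$. Using $dM/p = N/p$ and $dM/q = N/q$, this produces exactly the decomposition \eqref{pqroots}, with $P(X^d)$ and $Q(X^d)$ still having nonnegative coefficients (the substitution only redistributes them among powers divisible by $d$). This establishes both the existence and the nonnegativity claims.

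Finally, for the non-triviality statement, I would take an arbitrary representation as in \eqref{pqroots} and evaluate both sides at the specific root of unity $X = \ze_N^{p^a}$, which is well-defined because $dp^a \mid N$. Then $X^d = \ze_N^{dp^a}$; $X^{N/p} = \ze_N^{p^{a-1}N} = 1$, giving $\Phi_p(X^{N/p}) = \Phi_p(1) = p$; and $X^{N/q} = (\ze_N^{N/q})^{p^a}$ is a primitive $q$-th root of unity since $\gcd(p^a, q) = 1$, so $\Phi_q(X^{N/q}) = 0$. The identity collapses to
\[ T(\ze_N^{dp^a}) = p \cdot P(\ze_N^{dp^a}), \]
forcing $P(\ze_N^{dp^a}) \neq 0$ whenever $T(\ze_N^{dp^a}) \neq 0$, regardless of the chosen $Q$; hence $P \not\equiv 0$. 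The claim for $Q$ follows by swapping the roles of $p$ and $q$. The only point requiring mild care is tracking the passage between the quotient rings $\ZZ[X]/(X^M-1)$ and $\ZZ[X]/(X^N-1)$ under $X \mapsto X^d$, but this is routine since $X^M - 1$ divides $X^N-1$ after this substitution.
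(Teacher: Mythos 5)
Your proposal is correct and follows essentially the same route as the paper: the existence and nonnegativity of $P$, $Q$ come from the Lam--Leung refinement in Theorem \ref{vanishingsums} (you merely spell out the reduction to $\ZZ_{N/d}$ that the paper leaves implicit), and the non-triviality of $P$ is obtained by the same specialization, since evaluating \eqref{pqroots} at $X=\ze_N^{p^a}$ is exactly the paper's two-step substitution $X\mapsto X^{p^a}$ followed by $X\mapsto\ze_N$, which turns $\Php$ into $p$ and annihilates the $\Phq$ term.
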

 \begin{proof}
  The nonnegativity of $P$ and $Q$ follows from the second part of Theorem \ref{vanishingsums}. If $T(X^{dp^a})\neq0$, then replacing $X$ by $X^{p^a}$ we obtain
  \[T(X^{dp^a})\equiv pP(X^{dp^a})+Q(X^{dp^a})\Phq\bmod(X^N-1),\]
  and substituting $X$ by $\ze_N$, $T(\ze_N^{dp^a})=pP(\ze_N^{dp^a})\neq0$, therefore $P\not\equiv0$, as desired.
 \end{proof}
 
 If $N/d$ is only divided by one prime factor, $p$, it is understood $Q\equiv0$ at \eqref{pqroots}.

 \bigskip
 \section{Structural results on formal duality}\label{structural}
 \bigskip
 
 
 
 
 Let $G$ be a finite Abelian group, and $\hat{G}$ its dual. 
 
 \begin{defn}
  The sets $S\ssq G$, $T\ssq\hat{G}$ are formally dual if they satisfy
  \begin{equation}\label{formdualdefn}
  \abs*{\frac{1}{\abs{S}}\sum_{x\in S}\xi(x)}^2=\frac{1}{\abs{T}}\nu_T(\xi),
  \end{equation}
  for every $\xi\in \hat{G}$, where $\nu_T$ is the \emph{weight enumerator} of $T$, defined by
  \[\nu_T(\xi)=\#\set{(t,t')\in T\times T : t-t'=\xi}.\]
 \end{defn}

 Under the notation introduced in Section \ref{basicnotation}, we observe that we can rewrite the weight enumerator function simply as a convolution:
 \begin{equation}
  \nu_T(\xi)=\wtn{T}(\xi).
 \end{equation}
 From now on, we will focus on cyclic groups, and 
 we will use the explicit isomorphism between $\ZZ_N$ and $\wh{\ZZ}_N$ mapping $y$ to $\chi_y$ given by $\chi_y(x)=\ze_N^{xy}$, for all $x,y,\in\ZZ_N$, thus considering both $S$ and $T$ as subsets of the same
 group, $\ZZ_N$. Then, putting $\xi=\chi_y$,
 the left hand side of \eqref{formdualdefn} can be written as
 \[\frac{1}{\abs{S}^2}\bigl|\sum_{x\in S}\ze_N^{xy}\bigr|^2=\frac{1}{\abs{S}^2}\abs{S(\ze_N^{y})}^2,\]
 so we can rewrite \eqref{formdualdefn} in the cyclic case as
 \begin{equation}\label{formdualalt}
  \frac{1}{\abs{S}^2}\abs{S(\ze_N^y)}^2=\frac{1}{\abs{T}}\wtn{T}(y).
 \end{equation}
 Furthermore, there is an obvious connection between the mask polynomial of $S$ and the Fourier transform of $\bs1_S$, namely
 \[\hat{\bs1}_S(y)=S(\ze_N^y).\]
 The operator $\frac{1}{\sqrt{N}}\bs F$ is unitary, so Parseval's identity is
 \begin{equation}\label{Parseval}
 N\sum_{x=1}^N\abs{f(x)}^2 = \sum_{y=1}^N\abs{\hat{f}(y)}^2,
 \end{equation}
 and we have $\wh{f\ast g}=\hat{f}\hat{g}$, therefore,
 \begin{equation}\label{equiv1}
 \abs{S(\ze_N^y)}^2=\abs{\hat{\bs1}_S(y)}^2=\hat{\bs1}_S(y)\hat{\bs1}_{-S}(y)=\wh{\wtn{S}}(y),
 \end{equation}
 so \eqref{formdualdefn} can also be written as
 \begin{equation}\label{equiv2}
  \frac{1}{\abs{S}^2}\abs{\hat{\bs1}_S(y)}^2=\frac{1}{\abs{T}}\wtn{T}(y),
 \end{equation}
 when $G=\ZZ_N$.
 Summing over $y\in\ZZ_N$ and applying \eqref{Parseval}, we obtain
 \[\frac{N}{\abs{S}^2}\sum_{y\in\ZZ_N}\abs{\bs1_S(y)}^2=\frac{N}{\abs{S}}=\abs{T},\]
 proving a fact already known \cite{CKRS,Schueler16}.
 \begin{prop}\label{STN}
 Let $T,S\ssq\ZZ_N$ be formally dual. Then $N=\abs{S}\cdot\abs{T}$. 
 \end{prop}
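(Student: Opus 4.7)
The plan is to sum the formal duality identity \eqref{equiv2} over all $y\in\ZZ_N$ and exploit Parseval's identity together with the elementary fact that the total mass of a convolution factors. Concretely, starting from
\[\frac{1}{\abs{S}^2}\abs{\hat{\bs1}_S(y)}^2=\frac{1}{\abs{T}}(\bs1_T\ast\bs1_{-T})(y),\]
I would sum both sides over $y\in\ZZ_N$. On the left, applying Parseval \eqref{Parseval} to $f=\bs1_S$ yields $\sum_{y}\abs{\hat{\bs1}_S(y)}^2=N\sum_x\abs{\bs1_S(x)}^2=N\abs{S}$, so the left-hand side totals to $N/\abs{S}$. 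On the right, summing a convolution over the whole group factors as the product of the summands' total masses, giving $\sum_y(\bs1_T\ast\bs1_{-T})(y)=\abs{T}^2$, so the right-hand side totals to $\abs{T}$. Equating produces $N/\abs{S}=\abs{T}$, that is, $N=\abs{S}\cdot\abs{T}$.

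There is essentially no obstacle here: both tools (Parseval and the multiplicativity of the convolution sum) have already been recorded in the preceding paragraph, and evaluating the formal duality identity at the single character $y=0$ already hints at the relationship. The only matter of care is to note that the identity \eqref{equiv2} must hold simultaneously for every $y\in\ZZ_N$, so summing is legitimate; the resulting scalar identity then pins down the product $\abs{S}\cdot\abs{T}$ exactly, without any further structural input on $S$ or $T$.
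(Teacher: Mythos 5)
Your proposal is correct and is essentially identical to the paper's own argument: the paper likewise sums \eqref{equiv2} over $y\in\ZZ_N$, applies Parseval's identity \eqref{Parseval} to the left-hand side to obtain $N/\abs{S}$, and uses $\sum_y(\bs1_T\ast\bs1_{-T})(y)=\abs{T}^2$ on the right to conclude $N=\abs{S}\cdot\abs{T}$.
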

 \eqref{equiv2} can also be written as
 \begin{equation}\label{equiv3}
  \frac{\wh{\wtn{S}}(y)}{\abs{S}^2}=\frac{\wtn{T}(y)}{\abs{T}}.
 \end{equation}
 $\frac{1}{N}\bs F^2$ fixes all even functions, therefore by Fourier inversion we get
 \[\frac{N\wtn{S}(y)}{\abs{S}^2}=\frac{\wh{\wtn{T}}(y)}{\abs{T}}\Longleftrightarrow\frac{\wtn{S}(y)}{\abs{S}}=\frac{\wh{\wtn{T}}(y)}{\abs{T}^2},\]
 confirming that the definition of formal duality is indeed \emph{dual} with respect to $S$, $T$, as expected.
 \begin{prop}\label{basic}
  Let $T,S\ssq\ZZ_N$ be formally dual subsets. Then, the values of $\wtn{T}$ are fixed within a divisor class, and $\abs{T(\ze_N^d)}^2\in\ZZ$ for every integer $d$.
  Also, the mask polynomial of $T-T$ as a multiset is $\equiv T(X)T(X^{-1})\bmod(X^N-1)$, where it is understood that $X^{-1}\equiv X^{N-1}\bmod(X^N-1)$, hence
  \begin{eqnarray*}
  T(X)T(X^{-1}) &\equiv& \sum_{d\mid N}\wtn{T}(d)R_{N/d}(X^d)\bmod(X^N-1)\\
  &\equiv& \frac{\abs{T}}{\abs{S}^2}\sum_{d\mid N}\abs{S(\ze_N^d)}^2R_{N/d}(X^d)\bmod(X^N-1).
  \end{eqnarray*}
 \end{prop}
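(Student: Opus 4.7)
The plan is to derive all four assertions from the formal-duality identity \eqref{formdualalt} and the Galois action on $\QQ(\ze_N)$ recalled in Section \ref{basicnotation}. Since $S\ssq\ZZ_N$, the mask polynomial $S(X)$ has integer coefficients, so $\abs{S(\ze_N^y)}^2 = S(\ze_N^y)S(\ze_N^{-y})$ is an algebraic integer in $\ZZ[\ze_N]$. On the other hand, \eqref{formdualalt} rearranges to $\abs{S(\ze_N^y)}^2 = \frac{\abs{S}^2}{\abs{T}}\wtn{T}(y)\in\QQ$, since $\wtn{T}(y)$ is a nonnegative integer. A rational algebraic integer is a rational integer, so $\abs{S(\ze_N^y)}^2\in\ZZ$. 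By the symmetry between $S$ and $T$ in Definition \ref{maindefn}, the same argument with their roles swapped gives the second assertion, $\abs{T(\ze_N^d)}^2\in\ZZ$ for every integer $d$.

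Next I would upgrade rationality to divisor-class invariance via Galois theory. Any automorphism $\sig_g\in\Gal(\QQ(\ze_N)/\QQ)$, indexed by $g\in\ZZ_N^{\star}$, sends $\ze_N^y\mapsto\ze_N^{gy}$, and therefore $\abs{S(\ze_N^y)}^2\mapsto\abs{S(\ze_N^{gy})}^2$. Because $\abs{S(\ze_N^y)}^2$ is rational it is fixed by every such $\sig_g$, so it depends only on the $\ZZ_N^{\star}$-orbit of $y$, which is exactly the divisor class $\set{y'\in\ZZ_N:\gcd(y',N)=\gcd(y,N)}$. Reinserting this into \eqref{formdualalt} proves that $\wtn{T}(y)$ is constant on divisor classes, confirming the first assertion.

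For the mask polynomial of $T-T$, I would expand directly: modulo $X^N-1$, $T(X^{-1})\equiv\sum_{t\in T}X^{-t}$, hence
\[T(X)T(X^{-1})\equiv\sum_{t,t'\in T}X^{t-t'}\equiv\sum_{d\in\ZZ_N}\wtn{T}(d)X^d\bmod(X^N-1),\]
which is precisely the mask polynomial of the multiset $T-T$. To obtain the first displayed congruence I would group monomials by divisor class: for each $d'\mid N$, the set $\set{y\in\ZZ_N:\gcd(y,N)=d'}$ equals $\set{d'k:1\le k\le N/d',\ \gcd(k,N/d')=1}$, whose mask polynomial is $R_{N/d'}(X^{d'})$ by the very definition of $R_{N/d'}$. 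Using the divisor-class constancy of $\wtn{T}$ just established, the sum regroups to $\sum_{d\mid N}\wtn{T}(d)R_{N/d}(X^d)$, and substituting $\wtn{T}(d)=\frac{\abs{T}}{\abs{S}^2}\abs{S(\ze_N^d)}^2$ from \eqref{formdualalt} produces the second congruence. The only non-routine step is the Galois-theoretic passage from rationality to constancy on divisor classes; once formal duality forces $\abs{S(\ze_N^y)}^2$ to be rational, the rest is bookkeeping, so I anticipate no serious obstacle.
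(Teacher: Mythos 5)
Your proof is correct and follows essentially the same route as the paper's: rationality of the right-hand side of \eqref{formdualalt} combined with algebraic integrality and the Galois action $\ze_N\mapsto\ze_N^g$ gives the integrality and the divisor-class invariance, and the displayed congruences follow by expanding $T(X)T(X^{-1})$ and regrouping monomials by divisor class. The only small imprecision is that the symmetry you invoke to swap $S$ and $T$ does not come from Definition \ref{maindefn} (which concerns periodic sets in $\RR^n$) but from the Fourier-inversion computation carried out just before the proposition, which shows that \eqref{formdualdefn} is indeed symmetric in the pair.
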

 
 \begin{proof}
  Let $y\in\ZZ_N$ be arbitrary, and let $g\in\ZZ_N^{\star}$. Consider $\sig\in\Gal(\QQ(\ze_N)/\QQ)$ such that $\sig(\ze_N)=\ze_N^g$. The right hand side of \eqref{formdualalt} is
  a rational number, hence invariant under the action of $\sig$; the left hand side though, becomes 
  \[\frac{1}{\abs{S}^2}\abs{S(\ze_N^{gy})}^2=\frac{1}{\abs{T}}\wtn{T}(gy),\]
  which gives
  \[\wtn{T}(y)=\wtn{T}(gy),\]
  for all $y\in\ZZ_N$, $g\in\ZZ_N^{\star}$, proving the first part. Next, by \eqref{formdualalt} we obtain
  \[\abs{T(\ze_N^d)}^2=\frac{\abs{T}^2}{\abs{S}}\wtn{S}(d),\]
  for every $d$. The left hand side is in $\ZZ[\ze_N]$, while the right hand side in $\QQ$, whence $\abs{T(\ze_N^d)}^2\in\ZZ$ for all $d$ (similarly for $S$).
  
  The next part follows from the fact that $T(X^{-1})$ is the mask polynomial of $-T$, and if $A(X)$, $B(X)$ are the mask polynomials of the multisets $A$, $B$, then 
  $A(X)B(X)$ is the mask polynomial of the sumset $A+B$ (\emph{counting multiplicities}). Furthermore, the coefficients of the mask polynomial of the multiset $T-T$ are
  precisely the values of $\wtn{T}$, which yields
  \begin{eqnarray*}
  T(X)T(X^{-1}) & \equiv &\sum_{t=0}^{N-1}\wtn{T}(t)X^t\equiv\sum_{d\mid N}\wtn{T}(d)R_{N/d}(X^d)\equiv \\
  & \equiv &\frac{\abs{T}}{\abs{S}^2}\sum_{d\mid N}\abs{S(\ze_N^d)}^2R_{N/d}(X^d)\bmod(X^N-1),
  \end{eqnarray*}
  completing the proof.
 \end{proof}

 For any $T\ssq\ZZ_N$ and $d\mid N$, $0\leq j\leq d-1$, we define
 \[T_{j,d}:=\set{t\in T:t\equiv j\bmod d}.\]
 
 \begin{prop}\label{modd}
  The following holds for a primitive $T\ssq\ZZ_N$ possessing a formal dual:
  \[\wtn{T}(1)R_N(X)\equiv \sum_{d\mid N}\mu(d)\sum_{j=0}^{d-1}T_{j,d}(X)T_{j,d}(X^{-1})\bmod(X^N-1).\]
 \end{prop}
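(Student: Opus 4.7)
The plan is to interpret each side as a mask polynomial of a multiset of differences, and then apply Möbius inversion over the divisors of $N$.

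First, I would rewrite the right-hand side combinatorially. For fixed $d \mid N$ and $0 \le j \le d-1$, expanding the product gives
\[T_{j,d}(X)\,T_{j,d}(X^{-1}) \equiv \sum_{t,t'\in T_{j,d}} X^{t-t'} \bmod (X^N-1),\]
so summing over $j$ collects exactly those pairs $(t,t') \in T \times T$ with $t \equiv t' \bmod d$, giving
\[A_d(X) := \sum_{j=0}^{d-1} T_{j,d}(X)\,T_{j,d}(X^{-1}) \equiv \sum_{\substack{t,t'\in T \\ d\mid t-t'}} X^{t-t'} \bmod (X^N-1).\]
Here, since $d \mid N$, the divisibility $d \mid t-t'$ in $\ZZ$ is equivalent to $t-t' \in d\ZZ_N$, so the condition is unambiguous modulo $X^N - 1$.

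Next I would swap the order of summation in $\sum_{d \mid N} \mu(d) A_d(X)$. For each pair $(t,t')$ the coefficient of $X^{t-t'}$ contributed is $\sum_{d \mid \gcd(t-t',N)} \mu(d)$, which by the standard identity $\mu \ast \bs1 = e$ equals $1$ if $\gcd(t-t',N)=1$ and $0$ otherwise. Hence
\[\sum_{d\mid N} \mu(d)\, A_d(X) \equiv \sum_{\substack{t,t'\in T \\ \gcd(t-t',N)=1}} X^{t-t'} \equiv \sum_{k \in \ZZ_N^{\star}} \wtn{T}(k)\, X^k \bmod (X^N-1).\]

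To close the argument, I would invoke Proposition \ref{basic}: since $T$ possesses a formal dual, the values of $\wtn{T}$ are constant on divisor classes of $\ZZ_N$, in particular $\wtn{T}(k) = \wtn{T}(1)$ for every $k \in \ZZ_N^{\star}$. Substituting yields the required identity, because by definition $R_N(X) = \sum_{k \in \ZZ_N^{\star}} X^k$.

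No real obstacle arises here; the only subtle point is being careful that the condition ``$d \mid t - t'$'' is interpreted inside $\ZZ_N$ (i.e.\ $t - t' \in d\ZZ_N$) rather than in $\ZZ$, but since $d \mid N$ the two notions agree on any lift, so the Möbius inversion goes through without complication. The role of primitivity of $T$ is not directly used in this particular identity; what is used is the existence of a formal dual, through the constancy of $\wtn{T}$ on divisor classes.
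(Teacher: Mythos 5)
Your proof is correct and follows essentially the same route as the paper: expand each $T_{j,d}(X)T_{j,d}(X^{-1})$ into differences, apply $\sum_{d\mid\gcd(t-t',N)}\mu(d)=[\gcd(t-t',N)=1]$, and invoke the constancy of $\wtn{T}$ on $\ZZ_N^{\star}$ from Proposition \ref{basic}. Your closing remark that primitivity is not actually needed (only the existence of a formal dual) is also consistent with the paper's argument.
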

 
 \begin{proof}
  Consider the following union of \emph{multisets}:
  \[\bigcup_{\substack{0\leq i,j\leq\rad(N)-1\\ \gcd(N,i-j)=1}}(T_{i,\rad(N)}-T_{j,\rad(N)}).\]
  By definition, this is precisely the set $\ZZ_N^{\star}$, where every element appears with the same multiplicity, i.e. $\wtn{T}(1)$, hence its mask polynomial is
  \[\wtn{T}(1)R_N(X).\]
  On the other hand, we have
  \[\sum_{d\mid N}\mu(d)\sum_{j=0}^{d-1}T_{j,d}(X)T_{j,d}(X^{-1})\equiv \sum_{n=0}^{N-1}\sum_{d\mid N}\mu(d)\sum_{j=0}^{d-1}\wtn{T_{j,d}}(n)X^n\bmod(X^N-1),\]
  so we will compare coefficients between the latter polynomial and $\wtn{T}(1)R_N(X)$. The coefficient of $X^n$
  in $\sum_{n=0}^{N-1}\sum_{d\mid N}\mu(d)\sum_{j=0}^{d-1}\wtn{T_{j,d}}(n)X^n$ is simply
  \begin{equation}\label{coeffs}
  \sum_{d\mid N}\mu(d)\sum_{j=0}^{d-1}\wtn{T_{j,d}}(n).
  \end{equation}
  The term $\wtn{T_{j,d}}(n)$ counts the number of pairs $(t,t')\in T_{j,d}\times T_{j,d}$ that satisfy $t-t'=n$. If $\gcd(n,N)=1$, $t$ and $t'$ cannot belong to the same set
  $T_{j,d}$, for every $d>1$, $0\leq j\leq d-1$; therefore the only contribution comes from the term $\wtn{T}(n)=\wtn{T}(1)$, which is the same as the coefficient of $X^n$ 
  in $\wtn{T}(1)R_N(X)$. If $\gcd(n,N)>1$, then the contribution of a specific pair $(t,t')\in T\times T$ with $t-t'=n$ in \eqref{coeffs} is $\sum_{d\mid t-t'}\mu(d)=0$, which
  shows that both coefficients must be equal to $0$ in this case, completing the proof.
 \end{proof}

 \begin{lemma}\label{mainformula}
  Let $T$, $S$ be formally dual subsets of $\ZZ_N$. Then, for every $d\mid N$ we have
  \[\frac{1}{\sqrt{d}\abs{T}^{3/2}}\sum_{e\mid d}\mu(\tfrac{d}{e})\abs{T(\ze_N^e)}^2=\frac{1}{\sqrt{N/d}\abs{S}^{3/2}}\sum_{\de\mid \frac{N}{d}}\mu(\tfrac{N/d}{\de})\abs{S(\ze_N^{\de})}^2.\]
 \end{lemma}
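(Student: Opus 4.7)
The strategy is to use the Ramanujan-sum expansion of $|T(\ze_N^e)|^2$ from Proposition \ref{basic}, apply a M\"{o}bius weighting to rewrite the sum in terms of $\wtn{T}$, and then invoke formal duality (which swaps $\wtn{T}$ with $|S(\ze_N^\cdot)|^2$ up to a normalization) to land on the right-hand side.

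Concretely, I would start by evaluating the congruence of Proposition \ref{basic} at $X = \ze_N^e$, which yields
\[|T(\ze_N^e)|^2 = \sum_{d'\mid N}\wtn{T}(d')\,C_{N/d'}(e),\]
since $R_{N/d'}(\ze_N^{d'e})$ is precisely the Ramanujan sum $C_{N/d'}(e)$. The workhorse is then the identity
\[\sum_{e\mid d}\mu(d/e)\,C_n(e) = \begin{cases} d\,\mu(n/d), & \text{if } d\mid n,\\ 0, & \text{if } d\nmid n,\end{cases}\]
which follows by expanding $C_n(e) = \sum_{g\mid\gcd(e,n)}g\,\mu(n/g)$ via \eqref{Rsum}, interchanging the order of summation, and using the collapse $\sum_{f\mid d/g}\mu((d/g)/f) = [d=g]$. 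Substituting this into the previous expansion gives
\[\sum_{e\mid d}\mu(d/e)|T(\ze_N^e)|^2 = d\sum_{d'\mid N/d}\mu\!\bigl((N/d)/d'\bigr)\,\wtn{T}(d').\]

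To conclude, I would substitute the formal duality relation $\wtn{T}(d') = \tfrac{|T|}{|S|^2}|S(\ze_N^{d'})|^2$ (a rearrangement of \eqref{formdualalt}) and simplify using $|T|\cdot|S| = N$ from Proposition \ref{STN}. The scalar prefactor collapses cleanly via $|T|^{1/2}|S|^{2} = \sqrt{N}\,|S|^{3/2}$ and $\sqrt{d}/\sqrt{N} = 1/\sqrt{N/d}$, producing exactly the right-hand side of the lemma.

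The only non-routine step is the M\"{o}bius--Ramanujan identity above; it is elementary but demands care when swapping the double sum. Everything else is bookkeeping built on Proposition \ref{basic}, \eqref{formdualalt}, and Proposition \ref{STN}, so I do not expect any serious obstacle.
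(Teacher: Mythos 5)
Your proposal is correct and follows essentially the same route as the paper's proof: both expand $\abs{T(\ze_N^e)}^2$ via Proposition \ref{basic} into Ramanujan sums, apply the M\"{o}bius weights, interchange the double sum so that $\sum_{e'\mid d/g}\mu((d/g)/e')$ collapses to force $g=d$ and $d\mid N/\de$, and finish with $N=\abs{S}\cdot\abs{T}$. The only (cosmetic) difference is that you defer the formal-duality substitution $\wtn{T}(d')=\tfrac{\abs{T}}{\abs{S}^2}\abs{S(\ze_N^{d'})}^2$ to the end and package the combinatorial step as a standalone M\"{o}bius--Ramanujan identity, whereas the paper substitutes it at the outset.
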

 
 \begin{proof}
  By Proposition \ref{basic}, we get
  \[\abs{T(\ze_N^e)}^2=\frac{\abs{T}}{\abs{S}^2}\sum_{\de\mid N}\abs{S(\ze_N^{\de})}^2C_{N/\de}(e)=
  \frac{\abs{T}}{\abs{S}^2}\sum_{\de\mid N}\abs{S(\ze_N^{\de})}^2\sum_{g\mid\gcd(e,N/\de)}g\mu(\frac{N/\de}{g}).\]
  Hence,
  \begin{equation}\label{moebius1}
  \sum_{e\mid d}\mu(\frac{d}{e})\abs{T(\ze_N^e)}^2=\frac{\abs{T}}{\abs{S}^2}\sum_{\de\mid N}\abs{S(\ze_N^{\de})}^2\sum_{\substack{e\mid d\\g\mid\gcd(e,N/\de)}}g\mu(d/e)\mu(\frac{N/\de}{g}).
  \end{equation}
  The inner sum  is equal to
  \[\sum_{g\mid\gcd(d,N/\de)}g\mu(\frac{N/\de}{g})\sum_{g\mid e\mid d}\mu(d/e)=\sum_{g\mid\gcd(d,N/\de)}g\mu(\frac{N/\de}{g})\sum_{e'\mid \frac{d}{g}}\mu(\frac{d/g}{e'}).\]
  The latter sum is nonzero, precisely when $g=d$ and $d\mid \frac{N}{\de}$; in that case it's equal to $d\mu(\frac{N/d}{\de})$. Substituting into \eqref{moebius1} we obtain
  \[\sum_{e\mid d}\mu(\frac{d}{e})\abs{T(\ze_N^e)}^2=\frac{d\abs{T}}{\abs{S}^2}\sum_{\de\mid \frac{N}{\de}}\mu(\frac{N/d}{\de})\abs{S(\ze_N^{\de})}^2,\]
  and the proof is completed by dividing both sides by $\sqrt{d}\abs{T}^{3/2}$.
 \end{proof}

 Next, we restrict our attention to \emph{primitive} subsets of $\ZZ_N$, that is, subsets that are not contained in a coset of a proper subgroup. We remark that if we remove this restriction, there
 are trivial pairs of formal duals given by $H<\ZZ_N$ and $H^{\perp}$, its orthogonal subgroup (which is isomorphic to the group of characters that vanish on $H$). However, these examples come from 
 dual lattices in Euclidean spaces, so we gain no new information with regards to formal duality there. Furthermore, if $T$ is not primitive, we can always reduce this situation to a primitive
 formally dual pair in $\ZZ_M$, where $M\mid N$ \cite{CKRS}.
 
 \begin{prop}\label{upperboundmask}
  Let $T\ssq\ZZ_N$ be primitive. Then, for every $\ze\neq1$, $\ze^N=1$, we have
  \[\abs*{T(\ze)}<\abs{T}.\]
 \end{prop}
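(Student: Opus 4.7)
The plan is to apply the triangle inequality directly and analyze the equality case. By definition $T(\zeta) = \sum_{t \in T} \zeta^t$, so
\[
\abs{T(\zeta)} = \Bigl|\sum_{t \in T} \zeta^t\Bigr| \leq \sum_{t \in T} \abs{\zeta^t} = \abs{T},
\]
with equality if and only if all the unit complex numbers $\zeta^t$, $t \in T$, are equal, i.e.\ they point in the same direction.

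The strategy is then to argue that the equality case forces $T$ into a proper coset of $\ZZ_N$, contradicting primitivity. Since $\zeta \neq 1$ is an $N$th root of unity, it is a primitive $d$th root of unity for some divisor $d \mid N$ with $d > 1$. The condition $\zeta^t = \zeta^{t'}$ for all $t,t' \in T$ is equivalent to $\zeta^{t - t'} = 1$, i.e.\ $d \mid t - t'$ in $\ZZ_N$. Thus $T$ is contained in a single residue class modulo $d$, which is a coset of the subgroup $d\ZZ_N < \ZZ_N$. Since $d > 1$, this subgroup is proper, contradicting the hypothesis that $T$ is primitive.

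I do not foresee any real obstacle here: the argument is just the standard equality case of the triangle inequality for sums of unit vectors, combined with the definition of primitivity. The only mild subtlety is tracking that $\zeta \neq 1$ indeed corresponds to a nontrivial divisor $d > 1$, which is immediate from $\zeta$ being a nontrivial $N$th root of unity.
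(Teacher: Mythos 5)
Your proof is correct and takes essentially the same approach as the paper: both arguments reduce to the strict case of the triangle inequality, using primitivity to guarantee that $T$ meets at least two residue classes modulo the order $d$ of $\ze$, so the unit vectors $\ze^t$ cannot all coincide. The paper phrases this by splitting $T$ into the classes $T_{i,d}$ and isolating two nonempty ones, but the content is identical.
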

 \begin{proof}
  Assume otherwise, that $\abs{T(\ze)}^2=\abs{T}^2$, where $\ze$ is a primitive $d$th root of unity. As $T$ is primitive, there are at least two integers $j,k$ with $j\not\equiv k\bmod d$, such that
  $T_{j,d}$ and $T_{k,d}$ are nonempty. Then,
  \[
  \abs{T(\ze)}=\abs*{\sum_{i=0}^{d-1}T_{i,d}\ze^i}\leq \abs{T_{j,d}\ze^j+T_{k,d}\ze^k}+\sum_{\substack{0\leq i\leq d-1\\ j,k\not\equiv i\bmod d}}\abs{T_{i,d}}<\sum_{i=0}^{d-1}\abs{T_{i,d}}=\abs{T},
  \]
  as $\abs{T_{j,d}\ze^j+T_{k,d}\ze^k}<\abs{T_{j,d}}+\abs{T_{k,d}}$; equality could only hold if $\abs{T_{j,d}}\ze^j=\la\abs{T_{k,d}}\ze^k$ for some $\la>0$, however, this is impossible.
 \end{proof}

 Lemma \ref{mainformula} provides the following estimate on $\wtn{T}(1)$.
 
 \begin{cor}\label{firstestimate}
  Let $T$, $S$ be primitive formally dual subsets of $\ZZ_N$. Then,
  \[\wtn{T}(1)\leq\frac{2^{\om(N)-1}\abs{T}^2}{N},\]
  and similarly for $S$. Equality can only hold when $\om(N)=1$ 
  and $T(\ze_p)=0$. Furthermore, if $\abs{T}\leq\abs{S}$ and $\wtn{T}(1)\neq0$ (or equivalently, $S(\ze_N)\neq0$), the following inequalities hold:
  \[\sqrt{\frac{N}{2^{\om(N)-1}}}\leq\abs{T}\leq\sqrt{N}\leq\abs{S}\leq\sqrt{2^{\om(N)-1}N},\]
  where again the leftmost and rightmost inequalities are equalities precisely when $\om(N)=1$, $T(\ze_p)=0$ and $\wtn{T}(1)=1$.
 \end{cor}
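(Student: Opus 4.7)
The plan is to apply Lemma~\ref{mainformula} with $d = N$. Combined with Proposition~\ref{STN} (giving $\abs{T}\abs{S}=N$) and \eqref{formdualalt} (giving $\abs{S(\ze_N)}^2 = (\abs{S}^2/\abs{T})\wtn{T}(1)$), this should collapse into the M\"obius-style identity
\[
N\wtn{T}(1) \;=\; \sum_{\de \mid N} \mu(N/\de)\abs{T(\ze_N^\de)}^2 \;=\; \sum_{e \mid \rad(N)}\mu(e)\abs{T(\ze_e)}^2,
\]
the second equality following by dropping the zero M\"obius terms and substituting $e = N/\de$ (so that $\ze_N^\de = \ze_e$).

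For the upper bound, I would discard the $\mu(e)=-1$ terms (which contribute non-positively) and bound each surviving term by $\abs{T}^2$. Primitivity of $T$ combined with Proposition~\ref{upperboundmask} gives the strict bound $\abs{T(\ze_e)} < \abs{T}$ for every $e > 1$, while trivially $\abs{T(\ze_1)} = \abs{T}$. Since the number of squarefree divisors of $\rad(N)$ with an even number of prime factors is $2^{\om(N)-1}$, this produces $N\wtn{T}(1) \leq 2^{\om(N)-1}\abs{T}^2$, as required.

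For the equality case, I need $T(\ze_e)=0$ for every $e\mid\rad(N)$ with $\mu(e)=-1$ and $\abs{T(\ze_e)}=\abs{T}$ for every $e\mid\rad(N)$ with $\mu(e)=1$. By Proposition~\ref{upperboundmask} the latter condition forces each such $e$ to equal $1$, which, by counting the squarefree divisors with even $\om$-weight, is possible only when $\om(N)=1$. In that case the only remaining condition is $\mu(p)=-1$, i.e.\ $T(\ze_p)=0$.

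Finally, assuming $\wtn{T}(1) \neq 0$, integrality forces $\wtn{T}(1) \geq 1$; combined with the upper bound this yields $\abs{T}^2 \geq N/2^{\om(N)-1}$. Coupled with $\abs{T} \leq \abs{S}$ and $\abs{T}\abs{S} = N$, this immediately delivers the four-fold chain of inequalities, and equality at the extreme ends forces both $\wtn{T}(1) = 1$ and saturation of the upper bound, hence precisely the extremal configuration identified above. The equivalence $\wtn{T}(1) \neq 0 \Leftrightarrow S(\ze_N) \neq 0$ is immediate from \eqref{formdualalt}. The only genuinely subtle step is the first one: recognizing that Lemma~\ref{mainformula} at $d=N$ collapses into the clean M\"obius identity for $\wtn{T}(1)$; once that is in hand, the remainder is routine bookkeeping.
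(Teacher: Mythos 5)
Your proposal is correct and follows essentially the same route as the paper: apply Lemma~\ref{mainformula} at $d=N$, rewrite $\abs{S(\ze_N)}^2$ via \eqref{formdualalt}, keep the $2^{\om(N)-1}$ nonnegative M\"obius terms bounded by $\abs{T}^2$ using Proposition~\ref{upperboundmask}, and finish with $N=\abs{S}\cdot\abs{T}$. The only (cosmetic) difference is that you first normalize the lemma into the clean identity $N\wtn{T}(1)=\sum_{e\mid\rad(N)}\mu(e)\abs{T(\ze_e)}^2$ before estimating, while the paper carries the $\abs{S}^{3/2}$ and $\sqrt{N}\abs{T}^{3/2}$ factors through the inequality.
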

 
 \begin{proof}
  We apply Lemma \ref{mainformula} for $d=N$:
  \[\frac{1}{\abs{S}^{3/2}}\abs{S(\ze_N)}^2=\frac{1}{\sqrt{N}\abs{T}^{3/2}}\sum_{e\mid N}\mu(N/e)\abs{T(\ze_N^e)}^2\leq\frac{2^{\om(N)-1}\abs{T}^2}{\sqrt{N}\abs{T}^{3/2}},\]
  by Proposition \ref{upperboundmask},
  where equality could only hold if $\om(N)=1$ and $T(\ze_p)=0$.
  By \eqref{formdualalt}, the left hand side is equal to
  \[\frac{\abs{S}^{3/2}}{N}\wtn{T}(1),\]
  whence
  \[\wtn{T}(1)\leq\frac{2^{\om(N)-1}N\abs{T}^2}{\sqrt{N}\abs{S}^{3/2}\abs{T}^{3/2}}=\frac{2^{\om(N)-1}\abs{T}^2}{N},\]
  as desired. Solving this inequality for $\abs{T}$ and then using $N=\abs{S}\cdot\abs{T}$ by Proposition \ref{STN}, we get the final inequalities.
 \end{proof}
 
 The above estimate is not always the best we can achieve; we will close this section with another estimate which most of the times is better.
 
 \begin{lemma}\label{mainestimate}
  Let $T$, $S$ be primitive formally dual subsets of $\ZZ_N$. Then
  \begin{equation}\label{main1}
   \wtn{T}(1)\leq\frac{\abs{T}^2-\abs{T}}{\vphi(N)}<\frac{\abs{T}^2}{\vphi(N)},
  \end{equation}
  and similarly for $S$. Furthermore, if $\abs{T}\leq\abs{S}$ and $\wtn{T}(1)\neq0$ (or equivalently, $S(\ze_N)\neq0$), the following inequalities hold:
  \begin{equation}\label{main2}
  \sqrt{\vphi(N)}<\abs{T}\leq\sqrt{N}\leq\abs{S}<\frac{N}{\sqrt{\vphi(N)}}. 
  \end{equation}
  The middle equalities can only hold when $N$ is a square.
 \end{lemma}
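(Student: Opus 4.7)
The plan is to deduce \eqref{main1} from the simple observation that $\wtn{T}$ is constant on $\ZZ_N^{\star}$, which is exactly the content of the first assertion of Proposition \ref{basic}. Denoting this common value by $\wtn{T}(1)$, I would sum over $\ZZ_N^{\star}$ to get
\[
\varphi(N)\,\wtn{T}(1) \;=\; \sum_{g\in\ZZ_N^{\star}}\wtn{T}(g) \;=\; \#\{(t,t')\in T\times T \,:\, t-t'\in\ZZ_N^{\star}\}.
\]
Since $\ZZ_N^{\star}\ssq\ZZ_N\sm\{0\}$, the right-hand side is at most the total number of pairs $(t,t')$ with $t\neq t'$, which is $\abs{T}^2-\abs{T}$. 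This gives the left inequality of \eqref{main1}, and the right inequality is trivial. The same argument applies verbatim to $S$.

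For \eqref{main2}, I would first record the equivalence $\wtn{T}(1)\neq 0 \Lrar S(\ze_N)\neq 0$, which is immediate from \eqref{formdualalt} with $y=1$. Assuming $\wtn{T}(1)\neq 0$, the integrality of $\wtn{T}(1)$ (it is a nonnegative integer by definition of the weight enumerator) forces $\wtn{T}(1)\geq 1$, so combining with the strict inequality in \eqref{main1} gives
\[
1\;\leq\;\wtn{T}(1)\;<\;\frac{\abs{T}^2}{\vphi(N)},
\]
i.e. $\abs{T}>\sqrt{\vphi(N)}$. By Proposition \ref{STN}, $\abs{T}\abs{S}=N$, whence the assumption $\abs{T}\leq\abs{S}$ yields $\abs{T}\leq\sqrt{N}\leq\abs{S}$, and the rightmost inequality $\abs{S}<N/\sqrt{\vphi(N)}$ follows from $\abs{S}=N/\abs{T}$ and the already established lower bound on $\abs{T}$. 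Finally, equality in either of the two middle inequalities forces $\abs{T}=\abs{S}=\sqrt{N}$, so $N$ must be a perfect square.

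There is no serious obstacle here; the only point that requires a moment's thought is that the strict inequality in \eqref{main1} cannot a priori be replaced by equality (so that $\abs{T}\geq\sqrt{\vphi(N)}$ is not enough), but this is handled automatically because $-\abs{T}<0$ makes the gap strict. Primitivity of $T$ is not actually used in this lemma; it is tacitly invoked only through the surrounding framework (to guarantee that $T$ is not concentrated on a coset, where the identification would be degenerate).
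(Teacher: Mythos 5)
Your proof is correct and follows essentially the same route as the paper: both arguments rest on the constancy of $\wtn{T}$ on $\ZZ_N^{\star}$ (Proposition \ref{basic}) together with the count of at most $\abs{T}^2-\abs{T}$ nonzero differences, and then deduce \eqref{main2} from $\abs{T}\abs{S}=N$. Your write-up is in fact slightly more careful than the paper's one-line justification (the summation over $\ZZ_N^{\star}$ makes the factor $\vphi(N)\,\wtn{T}(1)$ explicit rather than just "appears at least once"), and your side remark that primitivity is not actually needed here is accurate.
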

 
 \begin{proof}
  The number of nonzero differences between elements of $T$ are precisely $\abs{T}^2-\abs{T}$ (counting multiplicities). If $\wtn{T}(1)\neq 0$, this means that every element of $\ZZ_N^{\star}$
  appears at least once in $T-T$, yielding \eqref{main1} (if $\wtn{T}(1)=0$, it is trivial). From $\abs{T}\leq\abs{S}$ and \eqref{main1} we easily obtain \eqref{main2}.
 \end{proof}
 
 Now we compare the inequalities from Corollary \ref{firstestimate} and Lemma \ref{mainestimate}. Suppose that $\om(N)\geq3$, that is $N$ has at least three distinct prime factors.
 Then,
 \[\frac{\abs{T}^2}{\vphi(N)}=\frac{\abs{T}^2}{N}\prod_{\substack{p\mid N\\ p\text{ prime }}}\bra{1-\frac{1}{p}}^{-1}\leq
 \frac{\abs{T}^2}{N}\cdot 2\cdot\frac{3}{2}\cdot\bra{\frac{5}{4}}^{\om(N)-2}<\frac{2^{\om(N)-1}\abs{T}^2}{N},\]
 as $(8/5)^k>3/2$ for all $k\geq1$, so Lemma \ref{mainestimate} provides a better bound. The same holds when $\om(N)=2$ and $N$ is odd, as
 \[\frac{\abs{T}^2}{\vphi(N)}\leq\frac{\abs{T}^2}{N}\frac{3\cdot 5}{2\cdot 4}<\frac{2\abs{T}^2}{N},\]
 but at all other cases, Corollary \ref{firstestimate} gives better bounds. Indeed, if $\om(N)=1$, then obviously
 \[\frac{\abs{T}^2}{N}<\frac{\abs{T}^2}{\vphi(N)},\]
 while if $\om(N)=2$ and $N$ even, we obtain
 \[\frac{2\abs{T}^2}{N}=\frac{2\abs{T}^2}{\vphi(N)}\frac{p-1}{2p}<\frac{\abs{T}^2}{\vphi(N)},\]
 where $p$ the unique odd prime dividing $N$.

 \bigskip
 \section{The field descent method}\label{fielddescent}
 \bigskip
 
  We mention the main tools from the \emph{field descent method}, developed in \cite{KaSchmidt,Schmidt99,SchmidtFDM}. The question
  that was addressed by this method is the following: under which circumstances can we have $X\in\ZZ[\ze_N]$, such that $\abs{X}^2=n\in\ZZ$? 
  First, we need the definition below, before we pass to the main theorems of the field descent method.
  
  \begin{defn}[Definition 2.6 \cite{KaSchmidt}]\label{Fmn}
   Let $m,n>1$ integers. $\mathscr{D}(t)$ denotes the set of prime divisors of an integer $t$. For $q\in\mathscr{D}(n)$ let
   \[m_q:=\begin{cases}
           \prod_{p\in\mathscr{D}(m)\sm\set{q}}p,\	\	&\text{if }m\text{ is odd or }q=2\\
           4\prod_{p\in\mathscr{D}(m)\sm\set{2,q}}p,\	\	&\text{otherwise.}
          \end{cases}
\]
Set
\begin{eqnarray*}
b(2,m,n)=&\max_{q\in\mathscr{D}(n)\sm\set{2}}\set{\nu_2(q^2-1)+\nu_2(\ord_{m_q}(q))-1}\\
b(r,m,n)=&\max_{q\in\mathscr{D}(n)\sm\set{r}}\set{\nu_r(q^{r-1}-1)+\nu_r(\ord_{m_q}(q))}
\end{eqnarray*}
for any prime $r>2$ with the convention that $b(2,m,n)=2$ if $\mathscr{D}(n)=\set{2}$ and $b(r,m,n)=1$ if $\mathscr{D}(n)=\set{r}$. We define
\[F(m,n):=\gcd(m,\prod_{p\in\mathscr{D}(m)}p^{b(p,m,n)}).\]
  \end{defn}

  \begin{thm}[\cite{Schmidt99,SchmidtFDM}]\label{FDM}
   Let $A\in\ZZ[\ze_m]$, such that $\abs{A}^2=n$. Then, $A$ belongs to a smaller cyclotomic field up to multiplication by a root of unity, that is
   \[A\in \ze_m^j\ZZ[\ze_{F(m,n)}].\]
  \end{thm}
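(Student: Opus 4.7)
The statement is due to Schmidt, and the plan is to follow his strategy: show that $A$ lies in a single Galois orbit of roots-of-unity multiples under the subgroup $H := \Gal(\QQ(\ze_m)/\QQ(\ze_{F(m,n)}))$, and then extract an element of $\ZZ[\ze_{F(m,n)}]$ from $A$ by killing the resulting cocycle with a power of $\ze_m$.

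First I would analyze the prime ideal factorization of the principal ideal $(A)(\bar A) = (n)$ in the Dedekind domain $\ZZ[\ze_m]$. By Theorem \ref{splitting}, for each prime $q \mid n$ the ideal $q\ZZ[\ze_m]$ factors as a product of prime ideals $\mf Q_1,\dotsc,\mf Q_r$ with ramification index $\vphi(q^{\nu_q(m)})$ and inertia degree equal to the multiplicative order of $q$ modulo the $q$-free part of $m$; all prime ideals dividing $(A)$ must lie above some $q \mid n$. The decomposition group of each such $\mf Q_i$ inside $\Gal(\QQ(\ze_m)/\QQ)\cong \ZZ_m^{\times}$ is exactly the subgroup generated by $q \bmod m_q$ (together with the inertia at $q$).

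Next, for each prime $p \mid m$ with $p^{\nu_p(m)} \nmid F(m,n)$, I would construct an automorphism $\sig \in H$ which setwise fixes every prime $\mf Q$ lying above every $q \mid n$. The invariants $b(p,m,n)$ in Definition \ref{Fmn} are engineered precisely so that such a $\sig$ exists: they bound the $p$-adic contribution coming from $\nu_p(\ord_{m_q}(q))$ together with $\nu_p(q^{p-1}-1)$ (respectively $\nu_2(q^2-1)$ when $p=2$), and hence measure how much of the $p$-part of the Galois group can act nontrivially on the primes above $q$. Once $\sig$ fixes every $\mf Q$ dividing $(A)$, the fractional ideal $(\sig(A)/A)$ is trivial, so $\sig(A)/A$ is a unit in $\ZZ[\ze_m]$; since $|\sig(A)|^2 = n = |A|^2$ in every complex embedding, Kronecker's theorem on algebraic integers with all conjugates on the unit circle forces $\sig(A)/A$ to be a root of unity in $\QQ(\ze_m)$, that is, a power of $\ze_{2m}$. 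The induced map $\sig \mapsto \sig(A)/A$ is a $1$-cocycle $H \to \scal{\ze_{2m}}$, and since $H$ acts on the cyclic group $\scal{\ze_{2m}}$ through the cyclotomic character, a Hilbert-$90$-type computation produces an integer $j$ such that $\ze_m^{-j}A$ is $H$-invariant and therefore lies in $\ZZ[\ze_{F(m,n)}]$.

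The technical heart, and main obstacle, is verifying that a suitable $\sig$ really fixes each $\mf Q \mid q$: this requires precise control of the $p$-part of both the Frobenius (which is $q \bmod m_q$) and the inertia subgroup at $q$, and the choice of exponent $b(p,m,n)$ must survive the simultaneous constraints imposed by \emph{all} primes $q \mid n$. The asymmetric definition of $b(2,m,n)$ (using $\nu_2(q^2-1)$ rather than $\nu_2(q-1)$) is forced by the non-cyclicity of $(\ZZ/2^k\ZZ)^{\times}$ for $k \geq 3$, and the edge cases $\mathscr{D}(n)=\set{p}$ require separate verification. This is the substantive content of \cite{Schmidt99,SchmidtFDM}, and a complete proof is significantly longer than the outline above.
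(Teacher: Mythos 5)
The paper does not prove this theorem: it is imported verbatim from \cite{Schmidt99,SchmidtFDM} and used as a black box, so there is no internal proof to compare against. Your outline does track the actual architecture of Schmidt's argument --- factor the ideal $(A)(\bar A)=(n)$ into primes above the $q\in\mathscr{D}(n)$, find $\sig$ in $\Gal(\QQ(\ze_m)/\QQ(\ze_{F(m,n)}))$ fixing every such prime so that $\sig(A)/A$ is a unit of absolute value $1$ in every embedding, invoke Kronecker to conclude it is a root of unity, and then trivialize the resulting cocycle by a power of $\ze_m$. As a description of where the theorem comes from, this is accurate.

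As a proof, however, it has a genuine gap, and it is exactly the one you flag yourself: the entire content of the theorem is the claim that the exponents $b(p,m,n)$ of Definition \ref{Fmn} are large enough that, for every prime $p\mid m$ with $p^{\nu_p(m)}\nmid F(m,n)$, one can exhibit a $\sig$ of $p$-power order lying \emph{simultaneously} in the decomposition group of every prime ideal above every $q\in\mathscr{D}(n)$. You assert that the invariants are ``engineered precisely so that such a $\sig$ exists'' but never verify it; without that verification nothing has been proved, since the rest of the argument (Kronecker plus the Hilbert-90 step) is routine. There is also a technical imprecision worth noting: the decomposition group of a prime above $q$ is generated by the inertia at $q$ together with the Frobenius, which is $q$ modulo the full $q$-free part $m'=m/q^{\nu_q(m)}$ of $m$, not modulo the radical-like quantity $m_q$ of Definition \ref{Fmn}; the passage from $\ord_{m'}(q)$ to $\ord_{m_q}(q)$ (and the special handling of $p=2$, where $(\ZZ/2^k\ZZ)^{\times}$ is not cyclic) is precisely the delicate bookkeeping that the definition of $b(p,m,n)$ encodes, and it is the part one must actually carry out. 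Finally, the cocycle-killing step needs an argument that the map $\sig\mapsto\sig(A)/A$ is a coboundary with values in $\scal{\ze_m}$ rather than merely $\scal{\ze_{2m}}$; this is not automatic and is handled explicitly in \cite{SchmidtFDM}.
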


  \begin{thm}[\cite{SchmidtFDM}]\label{Fbound}
   Let $X\in\ZZ[\ze_m]$ be of the form 
   \[X=\sum_{i=0}^{m-1}a_i\ze_m^i\]
   with $0\leq a_i\leq C$ for some constant $C$ and assume that $n=\abs{X}^2$ is an integer. Then
   \[n\leq \frac{C^2F(m,n)^2}{4\vphi(F(m,n))}.\]
  \end{thm}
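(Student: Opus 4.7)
The plan is to combine Theorem \ref{FDM} (the field descent) with a Parseval estimate on $\ZZ_F$, after re-centering the coefficients of $X$ using the identity $\sum_{i=0}^{F-1} \ze_F^i = 0$.

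First I would apply Theorem \ref{FDM} to obtain $X = \ze_m^j Y$ with $Y \in \ZZ[\ze_F]$ and $F = F(m,n)$; multiplication by $\ze_m^j$ preserves absolute values, so $\abs{Y}^2 = n$. The next step is to argue that $Y$ admits a representation $Y = \sum_{i=0}^{F-1} b_i \ze_F^i$ with integer $b_i$ satisfying $0 \le b_i \le C$. Writing $Y = \ze_m^{-j} X = \sum_k a_{k+j}\ze_m^k$ and grouping terms by $k \bmod d$ (where $d = m/F$), we use $\ze_m^d = \ze_F$ to reduce everything to an expression of the form $\sum_{r=0}^{d-1} \ze_m^r P_r(\ze_F)$; since $Y$ must lie in $\ZZ[\ze_F]$, the cyclotomic relations between the $\ze_m^r$ over $\ZZ[\ze_F]$ force this to collapse to a single $P_r$, yielding integer coefficients still in $[0,C]$.

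Once the bounded representation is in hand, I would shift by $C/2$ using $\sum_{i=0}^{F-1}\ze_F^i = 0$ to obtain $Y = \sum_i c_i \ze_F^i$ with $\abs{c_i} \le C/2$. Viewing $c$ as a function on $\ZZ_F$ with Fourier transform $\hat c(y) = \sum_i c_i \ze_F^{iy}$, Parseval's identity yields
\[
  \sum_{y=0}^{F-1} \abs{\hat c(y)}^2 \;=\; F \sum_{i=0}^{F-1} c_i^2 \;\le\; \frac{F^2 C^2}{4}.
\]
For each $y \in \ZZ_F^{\star}$, $\hat c(y) = \sig_y Y$ is a Galois conjugate of $Y$ over $\QQ$; since $\abs{Y}^2 = n \in \ZZ$ is Galois-invariant, $\abs{\hat c(y)}^2 = n$. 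Retaining only these $\vphi(F)$ terms on the left-hand side and discarding the rest gives $\vphi(F)\, n \le F^2 C^2/4$, which is the claimed bound.

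The hard part will be the intermediate descent to a bounded $\ZZ[\ze_F]$-representation: for prime-power $m$ the set $\{\ze_m^r : 0 \le r < d\}$ is a $\ZZ[\ze_F]$-basis of $\ZZ[\ze_m]$ and the argument is clean, but in general $\ZZ[\ze_m]$ carries extra cyclotomic relations over $\ZZ[\ze_F]$ (whenever a prime divides $m$ but not $F$), so $[\QQ(\ze_m):\QQ(\ze_F)] < d$. Handling this requires either a careful case analysis based on the prime factorization of $m$ and $F$, or an orbit-sum (Gauss-period) decomposition of $Y$ under the Galois subgroup $\Gal(\QQ(\ze_m)/\QQ(\ze_F))$; in either case one must verify that the final coefficients remain in $[0,C]$ throughout.
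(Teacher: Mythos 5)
The paper does not prove this theorem at all --- it is imported verbatim from Schmidt \cite{SchmidtFDM} --- so there is no internal proof to compare against. Your argument is, in substance, Schmidt's own: descend via Theorem \ref{FDM}, extract a representation of $Y=\ze_m^{-j}X$ over $\ZZ[\ze_F]$ with coefficients still in $[0,C]$, recentre by $C/2$, and apply Parseval on $\ZZ_F$ together with the Galois-invariance of $\abs{Y}^2=n$ at the $\vphi(F)$ frequencies in $\ZZ_F^{\star}$. All of these steps are sound; note that $F(m,n)\geq\rad(m)>1$, so $\sum_{i=0}^{F-1}\ze_F^i=0$ and $0\notin\ZZ_F^{\star}$, which your recentring and truncation both quietly use.

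The one step you leave open --- the descent to a bounded representation over $\ZZ[\ze_F]$ --- does not in fact require the case analysis or Gauss-period decomposition you anticipate. By Proposition \ref{Fproperties}(2), $\rad(F(m,n))=\rad(m)$, so the scenario ``a prime divides $m$ but not $F$'' never occurs for $F=F(m,n)$. Hence $[\QQ(\ze_m):\QQ(\ze_F)]=\vphi(m)/\vphi(F)=m/F=d$, and $\set{1,\ze_m,\dotsc,\ze_m^{d-1}}$ is a free $\ZZ[\ze_F]$-basis of $\ZZ[\ze_m]$ (since $\ze_m^d$ is a primitive $F$th root of unity, $\ze_m$ is a root of $T^d-\ze_m^d$ over $\ZZ[\ze_F]$, and this polynomial has the correct degree). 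Writing $Y=\sum_{r=0}^{d-1}\ze_m^r P_r(\ze_F)$ as you do, freeness forces $P_r(\ze_F)=0$ for $r\neq0$ and $Y=P_0(\ze_F)$, whose coefficients are among the original $a_i\in[0,C]$. With that one observation inserted, your proof is complete and is essentially the standard one.
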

  
  The definition of $F(m,n)$ seems technical, so we need the Proposition below in order to shed some light on it; see also \cite{KaSchmidt}.
  
  \begin{prop}\label{Fproperties}
   The number $F(m,n)$ has the following properties.
   \begin{enumerate}
    \item $F(m,n)$ divides $m$.
    \item $\rad(m)=\rad(F(m,n))$.
    \item $F(m,n)=F(m,\rad(n))$, i.e., if we fix $m$, $F(m,n)$ depends only on the prime divisors of $n$.
    \item For every finite set of primes $P$, there is an explicitly computable constant $C(P)$, such that $F(m,n)\leq C(P)$ whenever $\mathscr{D}(m),\mathscr{D}(n)\ssq P$
    (Proposition 2.2.7 \cite{SchmidtFDM}).
   \end{enumerate}
  \end{prop}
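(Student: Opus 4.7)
The plan is to address the four claims in turn. Claims (1) and (3) are essentially immediate from Definition \ref{Fmn}; claim (2) requires a short positivity check; and claim (4) is where the real content lies.

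For (1), the definition writes $F(m,n)$ as a gcd whose first argument is $m$, so divisibility is automatic. For (3), I will inspect how $n$ enters Definition \ref{Fmn}: the maxima defining $b(p,m,n)$ range over $q \in \mathscr{D}(n)$, and the summands involve only $p$, $q$, and $m_q$, with $m_q$ depending only on $\mathscr{D}(m)$. Since $\mathscr{D}(n) = \mathscr{D}(\rad(n))$, nothing in the definition sees multiplicities in $n$, so $b(p,m,n) = b(p,m,\rad(n))$ and therefore $F(m,n) = F(m,\rad(n))$.

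For (2), the divisibility $\rad(F(m,n)) \mid \rad(m)$ is immediate from (1). For the reverse, it suffices to show $b(p,m,n) \geq 1$ for every $p \in \mathscr{D}(m)$, since then $p$ divides both arguments of the gcd defining $F(m,n)$. The degenerate conventions $b(2,m,n) = 2$ when $\mathscr{D}(n) = \{2\}$ and $b(r,m,n) = 1$ when $\mathscr{D}(n) = \{r\}$ cover the case $\mathscr{D}(n) = \{p\}$. Otherwise, I will pick any $q \in \mathscr{D}(n) \setminus \{p\}$ and use standard congruences: for $p=2$, any odd prime $q$ satisfies $q^2 \equiv 1 \pmod 8$, so $\nu_2(q^2-1) \geq 3$; for $p$ odd, Fermat's little theorem gives $p \mid q^{p-1}-1$. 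Either of these bounds is enough to force $b(p,m,n) \geq 1$.

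The main obstacle is (4). Combining (1) and (2), $F(m,n)$ is supported on the primes in $\mathscr{D}(m) \subseteq P$, so the task reduces to bounding each exponent $b(p,m,n)$ uniformly once $P$ is fixed. The prefactors $\nu_2(q^2-1)$ and $\nu_p(q^{p-1}-1)$ depend only on $p,q \in P$, hence are bounded by a constant $C_1(P)$. The key observation, which I expect to be the only non-mechanical point, is that $m_q$ itself—although it is built from $m$—is bounded purely in terms of $P$: directly from its definition,
\[m_q \leq 4\prod_{r \in \mathscr{D}(m)} r \leq 4\prod_{r \in P} r =: M(P),\]
because $m_q$ is square-free up to the factor of $4$. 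Consequently $\ord_{m_q}(q) \leq \vphi(m_q) \leq M(P)$, so $\nu_p(\ord_{m_q}(q)) \leq \log_2 M(P)$. Assembling these uniform bounds yields an explicit $b(p,m,n) \leq C_2(P)$, and therefore $F(m,n) \leq \prod_{p \in P} p^{C_2(P)} =: C(P)$, proving (4).
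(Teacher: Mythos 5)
Your proof is correct. The paper itself offers no proof of this proposition — it is stated as a collection of properties imported from the field descent literature, with item (4) attributed to Proposition 2.2.7 of \cite{SchmidtFDM} — so there is no competing argument to compare against; your verification supplies exactly the details the paper delegates to its references. All the steps check out: (1) and (3) are indeed immediate from Definition \ref{Fmn}, your positivity check for (2) correctly handles both the convention cases and the generic case via $q^2\equiv1\bmod 8$ and Fermat's little theorem, and the key point for (4) — that $m_q$ is squarefree up to a factor of $4$ and hence bounded by $4\prod_{r\in P}r$ independently of the multiplicities in $m$, which bounds $\ord_{m_q}(q)$ and therefore each exponent $b(p,m,n)$ — is precisely the observation that makes the field descent bound uniform over all $m$ supported on $P$.
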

  
  The case $\om(m)\leq 2$ and $\mathscr{D}(n)\ssq\mathscr{D}(m)$ ($n>1$) will be particularly useful in the next two sections, so we will provide formulae for $F(m,n)$ in this case.
  By Proposition \ref{Fproperties}(3), $F(m,n)=F(m,p)$ if $\mathscr{D}(m)=\set{p}$, and $F(m,n)$ is equal to $F(m,pq)$, $F(m,p)$, or $F(m,q)$, if $\mathscr{D}(m)=\set{p,q}$.
  
  \begin{prop}\label{Cpq}
  Let $p$, $q$ be distinct primes. We have
  \[F(p^k,p)=\begin{cases}
              p,\	\	&\text{ if }p>2\\
              \gcd(2^k,4),\	\	&\text{ if }p=2.
             \end{cases}
\]
   Next, let $m=p^kq^l$ and $k,l>0$. If $m$ is odd we have
   \begin{eqnarray}
   F(m,pq) &=& \gcd(m,(p^{q-1}-1)(q^{p-1}-1)) \label{Fmpq}\\
   F(m,p) &=& p\gcd(q^l,p^{q-1}-1) \label{Fmp}\\
   F(m,q) &=& q\gcd(p^k,q^{p-1}-1) \label{Fmq},
   \end{eqnarray}
   and if $m$ is even (without loss of generality, $p=2$) we have
   \begin{eqnarray}
   F(m,2q) &=& \begin{cases}
		\gcd(m,\frac{1}{2}(q^2-1)(2^{q-1}-1)),\		\	&\text{ if }q\equiv1\bmod4\\ 
                \gcd(m,(q^2-1)(2^{q-1}-1)),\		\	&\text{ if }q\equiv3\bmod4\\
               \end{cases}
\label{Fm2q}\\
   F(m,2) &=& \begin{cases}
	      2\gcd(q^l,2^{q-1}-1),\		\	&\text{ if }4\nmid m\\
              4\gcd(q^l,2^{q-1}-1),\		\	&\text{ if }4\mid m\\
              \end{cases}
 \label{Fm2}\\
   F(m,q) &=& \begin{cases}
	      q\gcd(2^k,\frac{1}{2}(q^2-1)),\		\	&\text{ if }q\equiv1\bmod4\\
	      q\gcd(2^k,q^2-1),\		\	&\text{ if }q\equiv3\bmod4\\               
              \end{cases}
 \label{Fmqeven}.
   \end{eqnarray}
   At all cases, 
   \begin{equation}\label{pqconst}
    F(m,p),F(m,q)\leq F(m,pq) \leq p^aq^b,
   \end{equation}
   where 
   \[a=\begin{cases}
	\nu_p(q^{p-1}-1), \	\	&\text{ if }p>2\\
	\nu_2(q^2-1), \	        \	&\text{ otherwise},
       \end{cases}
\]
  $b=\nu_q(p^{q-1}-1)$.
  \end{prop}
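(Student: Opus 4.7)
The proof is a direct but bookkeeping-heavy unpacking of Definition \ref{Fmn}. The plan is to dispatch the prime-power case via the singleton convention, then compute $b(p, m, n)$ and $b(q, m, n)$ in each two-prime subcase, and finally verify the two inequalities.

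For $F(p^k, p)$, the convention in Definition \ref{Fmn} applies since $\mathscr{D}(n) = \{p\}$, giving $b(p, p^k, p) = 1$ for $p > 2$ and $b(2, 2^k, 2) = 2$. Taking the $\gcd$ with $p^k$ yields the stated values. The essential simplification in the two-prime case $m = p^k q^l$ is that $\omega(m) = 2$ makes the integer $m_q$ (from Definition \ref{Fmn}) very small: it is the other prime of $\mathscr{D}(m)$ when $m$ is odd, while for $p = 2$ one has $m_2 = q$ and $m_q = 4$. Consequently the multiplicative order that appears as $\ord_{m_q}(q)$ in $b(r, m, n)$ divides one of $p - 1$, $q - 1$, or $2$, so its $r$-adic valuation is usually zero.

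Concretely, in the odd case with $\mathscr{D}(n) = \{p, q\}$, the order $\ord_p(q)$ divides $p - 1$, so $\nu_p(\ord_p(q)) = 0$ and hence $b(p, m, pq) = \nu_p(q^{p-1} - 1)$; symmetrically $b(q, m, pq) = \nu_q(p^{q-1} - 1)$. Formula \eqref{Fmpq} then follows because $\nu_p(p^{q-1} - 1) = 0$ and $\nu_q(q^{p-1} - 1) = 0$, so the cross terms contribute nothing to the $\gcd$ with $m$. For $F(m, p)$, the singleton convention contributes a factor $p$ while the same computation gives $b(q, m, p) = \nu_q(p^{q-1} - 1)$, producing \eqref{Fmp}; formula \eqref{Fmq} is symmetric. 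The even case (with $p = 2$) differs only in $b(2, m, n)$, through $m_q = 4$: since $\ord_4(q)$ equals $1$ or $2$ according as $q \equiv 1$ or $3 \pmod 4$, the term $\nu_2(\ord_4(q))$ is $0$ or $1$, which after the $-1$ shift in the definition of $b(2, m, n)$ yields the case split $(q^2 - 1)/2$ versus $q^2 - 1$ seen in \eqref{Fm2q} and \eqref{Fmqeven}. The remaining formula \eqref{Fm2} uses instead the singleton convention $b(2, m, 2) = 2$, producing the factor $\gcd(2^k, 4)$.

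The inequalities collect from the formulas by a prime-by-prime comparison. The $q$-parts of $F(m, p)$ and $F(m, pq)$ agree, whereas the $p$-part of $F(m, p)$ equals $p$ and the $p$-part of $F(m, pq)$ equals $p^{\min(k, \nu_p(q^{p-1} - 1))}$; Fermat's little theorem gives $p \mid q^{p-1} - 1$, so the latter is at least $p$. The symmetric argument handles $F(m, q) \leq F(m, pq)$, and the bound $F(m, pq) \leq p^a q^b$ is immediate once the $\gcd$ with $m$ is dropped. The main obstacle is purely the subcase bookkeeping (odd vs.\ even $m$; singleton vs.\ two-element $\mathscr{D}(n)$; $q$ modulo $4$); no nontrivial arithmetic beyond Fermat's little theorem and the small values of $\ord_4$ is needed.
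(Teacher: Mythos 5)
Your proposal is correct and follows essentially the same route as the paper's proof: unpack Definition \ref{Fmn}, observe that $m_p=q$ and $m_q=p$ (respectively $m_q=4$ when $m$ is even), kill the $\nu_r(\ord_{m_q}(q))$ terms via $\ord_p(q)\mid p-1$ and $\ord_4(q)\in\{1,2\}$, and invoke the singleton conventions for $b$. The one spot worth tightening is the even case of \eqref{pqconst}, where the $2$-part of $F(m,2)$ is $\gcd(2^k,4)$ and can equal $4$, so the prime-by-prime comparison there needs $\nu_2(q^2-1)+\nu_2(\ord_4(q))-1\geq 2$ (which holds since $\nu_2(q^2-1)\geq 3$ for odd $q$) --- a detail the paper also leaves implicit.
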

  
  \begin{proof}
  By Definition \ref{Fmn}, $b(2,2^k,2)=2$ and $b(p,p^k,p)=1$ for $p>2$, so $F(2^k,2)=\gcd(2^k,2^2)$, while $F(p^k,p)=\gcd(p^k,p)=p$, proving the first part.
  
   Next, let $m=p^kq^l$. First we assume that $m$ is odd. Then, according to Definition \ref{Fmn}, $m_q=p$ and $m_p=q$, therefore $b(p,m,pq) = b(p,m,q) = \nu_p(q^{p-1}-1)$ while $b(p,m,p)=1$, and similarly
   $b(q,m,pq)=b(q,m,p)=\nu_q(p^{q-1}-1)$ and $b(q,m,q)=1$, since $p\nmid \ord_p(q)$ and $q\nmid \ord_q(p)$. This shows that
   \[F(m,pq)=\gcd(m,p^{b(p,m,pq)}q^{b(q,m,pq)})=\gcd(m,(p^{q-1}-1)(q^{p-1}-1)),\]
   since
   \[\nu_p((p^{q-1}-1)(q^{p-1}-1))=\nu_p(p^{q-1}-1)+\nu_p(q^{p-1}-1)=b(p,m,pq),\]
   and similarly for $b(q,m,pq)$. Moreover,
   \[F(m,p)=\gcd(m,pq^{b(q,m,p)})=p\gcd(q^l,q^{p-1}-1),\]
   and the formula for $F(m,q)$ is recovered in the same manner.
   
   Next, assume that $m$ is even, so that $p=2$ without loss of generality. Then, $m_2=q$ and $m_q=4$, therefore $b(2,m,2q)=b(2,m,q)=\nu_2(q^2-1)+\nu_2(\ord_4(q))-1$ and $b(2,m,2)=2$;
   similarly $b(q,m,2q)=b(q,m,2)=\nu_q(2^{q-1}-1)$, as $q\nmid\ord_q(2)$, and $b(q,m,q)=1$, as before. Hence,
   \[F(m,2q)=\gcd(m,2^{b(2,m,2q)}q^{b(q,m,2q)})=\gcd(m,\frac{1}{2}\ord_4(q)(q^2-1)(2^{q-1}-1))\]
   yielding \eqref{Fm2q}. Also
   \[F(m,2)=\gcd(m,4q^{b(q,m,2)})=2\gcd(\frac{m}{2},2^q-2),\]
   and
   \[F(m,q)=\gcd(m,2^{b(2,m,q)}q)=q\gcd(2^k,\frac{1}{2}\ord_4(q)(q^2-1)),\]
   giving us \eqref{Fm2} and \eqref{Fmqeven}. Equation \eqref{pqconst} follows easily.
  \end{proof}

 \bigskip
 \section{The prime power case, revisited}\label{oneprime}
 \bigskip
 
 Now we are ready to apply the methods already introduced; we will begin by providing two different proofs for the prime power case, one using the field descent method and one using the polynomial method.
 We emphasize that the polynomial method comprises of similar arguments as in \cite{Schueler16}, albeit with a different language.
 
 \begin{thm}\label{primepower}
  Let $N=p^k$, where $p$ prime. Then, $\ZZ_N$ cannot have primitive formally dual sets, unless $N=4$
 \end{thm}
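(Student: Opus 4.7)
The plan is to extract very rigid structural constraints on a hypothetical primitive $T \ssq \ZZ_N$ with formal dual $S$ from Corollary \ref{firstestimate}, and then apply the field descent bound of Theorem \ref{Fbound} to $T(\ze_N)$. Assume without loss of generality $\abs{T} \leq \abs{S}$. Primitivity forces $\wtn{T}(1) \geq 1$, otherwise $T - T \ssq p\ZZ_N$ and $T$ would lie in a single coset of the proper subgroup $p\ZZ_N$. Combining this with the bound $\wtn{T}(1) \leq \abs{T}^2/N$ from Corollary \ref{firstestimate} (with $\om(N) = 1$) and the constraints $\abs{T} \leq \abs{S}$, $\abs{T}\abs{S} = N$, yields $\abs{T} = \abs{S} = \sqrt{N}$; in particular $k = 2m$ is even, $\wtn{T}(1) = 1$, and the equality case of Corollary \ref{firstestimate} (via Proposition \ref{upperboundmask}) gives $T(\ze_p) = 0$.

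Next, I claim $S$ is also primitive. If not, $S \ssq s_0 + p\ZZ_N$ and $\wtn{S}$ would be supported on $p\ZZ_N$; by \eqref{formdualalt} this gives $T(\ze_N) = 0$, and Theorem \ref{pqcycles} (with $d = 1$ and only the single prime $p$ dividing $N$) expresses $T$ as a union of cosets of the order-$p$ subgroup $p^{k-1}\ZZ_N$. But then $\wtn{T}(g)$ would be a multiple of $p$ for every $g$, contradicting $\wtn{T}(1) = 1$. With $S$ primitive, the symmetric argument applied to $S$ gives $\wtn{S}(1) = 1$, and \eqref{formdualalt} with $y = 1$ and $\abs{T} = \abs{S} = p^m$ yields
\[
n := \abs{T(\ze_N)}^2 = \frac{\abs{T}^2}{\abs{S}} \wtn{S}(1) = p^m.
\]

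Finally, $T(X)$ has coefficients in $\set{0,1}$, so Theorem \ref{Fbound} gives $p^m \leq F(N,n)^2/(4\vphi(F(N,n)))$. Since $\mathscr{D}(n) = \mathscr{D}(N) = \set{p}$, Proposition \ref{Cpq} yields $F(N,n) = F(p^k, p)$, which equals $p$ for odd $p$ and $\min(2^k, 4)$ for $p = 2$. For odd $p$ the bound $p^m \leq p^2/(4(p-1))$ combined with $m \geq 1$ forces $4(p-1) \leq p$, impossible. For $p = 2$ with $k \geq 2$ the bound $2^m \leq 2$ forces $m = 1$, hence $N = 4$. The main delicacy lies in pinning down that $\wtn{S}(1) = 1$ exactly; any larger integer could introduce extra prime divisors of $n$, inflating $F(N,n)$ and weakening the descent bound beyond what the argument can absorb.
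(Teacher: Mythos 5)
Your argument is correct and follows essentially the same route as the paper's first proof of Theorem \ref{primepower} (the field descent proof): force $\abs{T}=\abs{S}=\sqrt{N}$ and $\wtn{T}(1)=\wtn{S}(1)=1$, deduce $\abs{T(\ze_N)}^2=p^{k/2}$, and apply Theorem \ref{Fbound} with $C=1$ together with Proposition \ref{Cpq}. The only differences are cosmetic — you bound $T(\ze_N)$ rather than $S(\ze_N)$, use Corollary \ref{firstestimate} in place of Lemma \ref{mainestimate}, and include a (redundant, since both sets are assumed primitive) verification that $S$ is primitive.
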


 Let $N=p^m$, and let $T$, $S$ be a pair of primitive formally dual subsets of $\ZZ_N$. We have (see also Lemma \ref{intersection})
 \[(T-T)\cap\ZZ_N^{\star}\neq\vn\neq(S-S)\cap\ZZ_N^{\star},\]
 otherwise $T-T\ssq p\ZZ_N$ or $S-S\ssq p\ZZ_N$, a possibility excluded due to the fact that $T$, $S$ are primitive. 
 This is equivalent to $\wtn{T}(1)\wtn{S}(1)\neq0$ or $T(\ze_N)S(\ze_N)\neq0$.
 Without loss of generality, we suppose $\abs{T}\leq \abs{S}$,
 so that $\abs{T}\leq\sqrt{N}$. If $N$ is not a square, then $m=2k+1$, and
 \[\abs{T}\leq p^k<p^{k+1}\leq\abs{S}.\]
 But then, the differences between unequal elements of $T$ are
 \[\abs{T}^2-\abs{T} \leq p^{2k}-p^k<p^{2k}(p-1)=\vphi(N),\]
 contradicting Lemma \ref{mainestimate}. Hence, $N$ must be a square, so that $m=2k$. If $\abs{T}<\abs{S}$, then
 $\abs{T}\leq p^{k-1}$, and we are led again to a contradiction, as
 \[\abs{T}(\abs{T}-1)\leq p^{2k-2}-p^{k-1}<p^{2k-1}(p-1)=\vphi(N).\]
 Therefore, $\abs{T}=\abs{S}=\sqrt{N}=p^k$.
 
 \begin{proof}[Proof of Theorem \ref{primepower} via the field descent method.]
  By Lemma \ref{mainestimate} we have 
  \[0<\wtn{T}(1)<\frac{\abs{T}^2}{N}=\frac{N}{\vphi(N)}=\frac{p}{p-1}\leq2,\]
  hence $\wtn{T}(1)=1$ and by \eqref{formdualalt} we get
  \[n=\abs{S(\ze_N)}^2=\frac{\abs{S}^2}{\abs{T}}\geq\sqrt{N}=p^k,\]
  and $\mathscr{D}(n)=\set{p}$. Furthermore, $S(\ze_N)=\sum_{s\in S}\ze_N^s$, therefore applying Theorem \ref{Fbound} with $C=1$ and Propositions \ref{Fproperties} and \ref{Cpq} we get for $p$ odd
  \[p^k\leq\frac{F(p^{2k},p)^2}{4\vphi(F(p^k,p))}=\frac{p^2}{4(p-1)}<p,\]
  a contradiction, while for $p=2$,
  \[2^k\leq\frac{F(2^{2k},2)^2}{4\vphi(F(2^{2k},2))}=2,\]
  yielding $k=1$, as desired.
 \end{proof}
 
 \medskip

 \begin{proof}[Proof of Theorem \ref{primepower} via the polynomial method.]
 Define $T_j:=\set{t\in T: t\equiv j\bmod p}$. Since 
 \[\wtn{T}(1)=1,\]
 Proposition \ref{modd} gives
 \[R_N(X)\equiv T(X)T(X^{-1})-\sum_{j=0}^{p-1}T_j(X)T_j(X^{-1})\bmod X^N-1,\]
 so that
 \[R_N(\zeta)=\abs{T(\zeta)}^2-\sum_{j=0}^{p-1}\abs{T_j(\zeta)}^2,\]
 for every $N$th root of unity $\zeta$. Putting $\zeta=1$ we get
 \begin{eqnarray*}
 \vphi(N)=N-\sum_{j=0}^{p-1}\abs{T_j}^2 \Longleftrightarrow \sum_{j=0}^{p-1}\abs{T_j}^2 = N/p \Longleftrightarrow N= p\sum_{j=0}^{p-1}\abs{T_j}^2\geq \abs{T}^2
 \end{eqnarray*}
 by Cauchy-Schwarz inequality, where equality holds precisely when all $\abs{T_j}$ are the same; equality indeed holds\footnote{In \cite{CKRS,Schueler16}, this
 argument was attributed to Gregory Minton.}, since $\abs{T}=\sqrt{N}$, hence $\abs{T_j}=p^{k-1}$, for
 all $j$. This implies that
 \[T(\ze_p)=\sum_{j=0}^{p-1}\ze_p^j\abs{T_j}=0,\]
 and similarly $S(\ze_p)=0$. The case $N=p^2$ has already been tackled in \cite{CKRS}, where primitive formally dual subsets exist only for $p=2$, so we may assume that
 $m=2k$ with $k\geq2$.
 
 If $T(\zeta)\neq0$ for any other $N$th root of unity besides $\ze_p$ and its conjugates, then $S-S$ intersects all divisor classes except for $\frac{N}{p}\ZZ_N^{\star}$,
 yielding
 \[\abs{S}^2-\abs{S}\geq \sum_{i=0}^{m-2}\vphi(N/p^i)=N-p,\]
 while on the other hand
 \[\abs{S}^2-\abs{S}=N-\sqrt{N}<N-p,\]
 a contradiction. So, there is some $N$th root of unity $\zeta$ with $\zeta^p\neq1$, such that $T(\zeta)=0$, hence by \eqref{calcrsums}
 \[0=R_N(\zeta)=-\sum_{j=0}^{p-1}\abs{T_j(\zeta)}^2,\]
 implying that $T_j(\zeta)=0$ for all $j$. This leads to a contradiction, when we take derivatives; by Lemma \ref{Rderivatives} we have
 \[R'(\zeta)\equiv N\zeta^{-1}\ent{\frac{1}{\zeta-1}-\frac{1}{\zeta^p-1}}\equiv\frac{N(1+\zeta+\dotsc+\zeta^{p-2})}{\zeta^p-1}\equiv\frac{N}{\zeta^p-1}\frac{\zeta^{p-1}-1}{\zeta-1}
 \not\equiv 0\bmod N\ZZ[\ze_N],\]
 since $\frac{\zeta^{p-1}-1}{\zeta-1}$ is a unit in $\ZZ[\ze_N]$, while $\zeta^p-1$ is not. On the other hand, we obtain
 \begin{eqnarray*}
 & \ent{T(X)T(X^{-1})-\sum_{j=0}^{p-1}T_j(X)T_j(X^{-1})}' = \\
 & T'(X)T(X^{-1})-T(X)T'(X^{-1})X^{-2}-\sum_{j=0}^{p-1}(T'_j(X)T_j(X^{-1})-T_j(X)T'_j(X^{-1})X^{-2}),
 \end{eqnarray*}
 and keeping in mind that we have $T(\zeta)=T(\zeta^{-1})=T_j(\zeta)=T_j(\zeta^{-1})=0$ for all $j$, we get
 \[\frac{d}{dX}\ent{T(X)T(X^{-1})-\sum_{j=0}^{p-1}T_j(X)T_j(X^{-1})}_{X=\zeta}=0,\]
 a contradiction. Thus, if $N$ is a prime power with $N\neq4$, there are no primitive formally dual pairs in $\ZZ_N$.
 \end{proof}

 \bigskip
 \section{Products of two prime powers}\label{twoprimes}
 \bigskip
 
 In this section, we will address the case $\mathscr{D}(N)=\set{p,q}$.
 
 \begin{lemma}\label{intersection}
  Let $T\ssq\ZZ_N$ be primitive and $N$ is divisible by at most two primes. Then $(T-T)\cap\ZZ_N^{\star}\neq\vn$.
 \end{lemma}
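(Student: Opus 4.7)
The plan is to argue by contradiction: suppose that every difference $t-t'$ with $t,t' \in T$ lies in $\ZZ_N \setminus \ZZ_N^{\star}$, and deduce that $T$ must be contained in a proper coset of $\ZZ_N$, contradicting primitivity. The assumption says precisely that every element of $T-T$ shares a prime factor with $N$, so since $\mathscr{D}(N) \subseteq \{p,q\}$, every difference is divisible by $p$ or by $q$.

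If $N$ is a prime power $p^k$, the argument is immediate: all differences being divisible by $p$ forces $T$ to lie in a single residue class mod $p$, hence in a coset of the proper subgroup $p\ZZ_N$, contradicting primitivity.

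The interesting case is $\mathscr{D}(N)=\{p,q\}$, and here I will use a short pigeonhole argument. Either every difference is divisible by $p$ (and we conclude as above), or there exist $t_1,t_2\in T$ with $p\nmid t_1-t_2$, which then forces $q\mid t_1-t_2$, i.e.\ $t_1\equiv t_2\bmod q$. For any third element $t_3\in T$, I will show $q\mid t_3-t_1$. Indeed, if instead $q\nmid t_3-t_1$, then $p\mid t_3-t_1$; moreover $t_3\not\equiv t_1\equiv t_2\bmod q$, so $q\nmid t_3-t_2$ either, forcing $p\mid t_3-t_2$. Subtracting would yield $p\mid t_1-t_2$, contrary to our choice. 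Hence every element of $T$ is congruent to $t_1$ modulo $q$, so $T\subseteq t_1+q\ZZ_N$, a proper coset of $\ZZ_N$, contradicting primitivity.

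There is no real obstacle here; the argument is elementary, and the only reason the hypothesis is restricted to at most two primes is that with three or more primes the pigeonhole step collapses (this is mirrored by the counterexample to the nonnegative version of Theorem \ref{vanishingsums} noted after that theorem, where $p$-, $q$-, and $r$-cycles combine in nontrivial ways). I will just need to present the two-prime pigeonhole cleanly and invoke primitivity at the end.
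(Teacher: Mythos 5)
Your proof is correct and takes essentially the same route as the paper's: both argue by contradiction from the dichotomy that every difference is divisible by $p$ or by $q$, use a three-element telescoping/pigeonhole step to force $T-T\ssq q\ZZ_N$ (the paper phrases this via the partition of $T$ into residue classes $T_{j}$ mod $p$, you via individual elements $t_1,t_2,t_3$), and then contradict primitivity. No gap; your element-wise presentation is just a minor reorganization of the same idea.
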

 
 \begin{proof}
  If $\rad(N)=p$, then $T$ is not a subset of any coset of the subgroup $p\ZZ_N$. In particular, there are two elements of $T$ that are not congruent $\bmod p$; hence, their difference
  is also prime to $N$, which yields the desired conclusion.
  
  Let $\rad(N)=pq$, and define $T_j=\set{t\in T : t\equiv j\bmod p}$. Since $T$ is primitive, at least two of the sets $T_j$ are nonempty. Let $T_a$ and $T_b$ be two arbitrary nonempty
  sets of this family. Suppose that $T_a$ has at least two elements, say $t$, $t'$, and take $t''\in T_b$. Assume that $(T-T)\cap\ZZ_N^{\star}=\vn$. Since $p\nmid t-t''$, we must certainly
  have $q\mid t-t''$, otherwise $\gcd(t-t'',N)=1$. The same holds for the difference between elements of $T_a$ and $T_b$, so
  \[T_a-T_b\ssq q\ZZ_N.\]
  On the other hand, $t-t'=(t-t'')-(t'-t'')\in q\ZZ_N$, and since $t,t'\in T_a$ were arbitrary, we obtain
  \[T_a-T_a\ssq q\ZZ_N,\]
  therefore,
  \[T-T=\bigcup_{a,b=0}^{p-1}(T_a-T_b)\ssq q\ZZ_N,\]
  a contradiction. Thus, $(T-T)\cap\ZZ_N^{\star}\neq\vn$.
  \end{proof}
  
  \begin{rem}
   The above does not necessarily hold when $n$ is divisible by at least three primes. For example, take $\rad(N)=pqr$, and
   \[T=pq\ZZ_N\cup qr\ZZ_N \cup pr\ZZ_N.\]
   $T$ is indeed a primitive set, but $(T-T)\cap\ZZ_N^{\star}=\vn$.
  \end{rem}

  Lemma \ref{intersection} shows that $S(\ze_N)T(\ze_N)\neq0$ when $\rad(N)=pq$ and $S$, $T$ are primitive formally dual, or equivalently, $\wtn{T}(1),\wtn{S}(1)\geq1$. Upper bounds
  are given by the following:
  
  \begin{prop}\label{Som}
   Let $N$ be a positive integer with $\rad(N)=pq$, and $T$, $S$ primitive formally dual subsets of $\ZZ_N$ with $\abs{T}\leq\abs{S}$. Then
   \begin{equation}\label{Someq}
   \abs{S(\ze_N)}^2=\frac{\abs{S}^3}{N}.
   \end{equation}
   Furthermore, $\wtn{S}(1)\leq3$, 
   so that
   \[\abs{T(\ze_N)}^2=\frac{K\abs{T}^3}{N},\]
   where $1\leq K\leq 3$.
  \end{prop}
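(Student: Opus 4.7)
The plan is to derive both claims from Corollary~\ref{firstestimate}, applied symmetrically to $T$ and to $S$, exploiting the strict inequality it provides whenever $\om(N)\geq 2$, together with $\abs{T}\abs{S}=N$ from Proposition~\ref{STN} and the primitivity lower bound $\wtn{T}(1),\wtn{S}(1)\geq 1$ supplied by Lemma~\ref{intersection}.

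For the first claim, I would first observe that $\abs{T}\leq\abs{S}$ combined with $\abs{T}\abs{S}=N$ forces $\abs{T}^2\leq N$. Since $\om(N)=2$, Corollary~\ref{firstestimate} gives the strict inequality
\[\wtn{T}(1)<\frac{2\abs{T}^2}{N}\leq 2,\]
which together with $\wtn{T}(1)\geq 1$ forces $\wtn{T}(1)=1$. Substituting into \eqref{formdualalt} and using $\abs{T}=N/\abs{S}$ yields $\abs{S(\ze_N)}^2=\abs{S}^2\wtn{T}(1)/\abs{T}=\abs{S}^3/N$, as desired.

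For the second claim, the key trick is to run Corollary~\ref{firstestimate} a second time, now on $S$, obtaining $\wtn{S}(1)<2\abs{S}^2/N$, while the strict inequality $1=\wtn{T}(1)<2\abs{T}^2/N$ just established translates into $\abs{T}^2>N/2$. Combining these with $\abs{T}\abs{S}=N$ we get
\[\wtn{S}(1)<\frac{2\abs{S}^2}{N}=\frac{2N}{\abs{T}^2}<\frac{2N}{N/2}=4,\]
so $\wtn{S}(1)\leq 3$ since $\wtn{S}(1)\in\ZZ$. Setting $K=\wtn{S}(1)\in\set{1,2,3}$, a final application of \eqref{formdualalt} gives $\abs{T(\ze_N)}^2=\abs{T}^2\wtn{S}(1)/\abs{S}=K\abs{T}^3/N$, completing the proof.

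The only point requiring care is that Corollary~\ref{firstestimate} genuinely produces a strict inequality when $\om(N)=2$: this follows because the sum $\sum_{e\mid N}\mu(N/e)\abs{T(\ze_N^e)}^2$ appearing in its proof includes the term $\abs{T(\ze_{pq})}^2$ with positive sign, and $\abs{T(\ze_{pq})}<\abs{T}$ by Proposition~\ref{upperboundmask} (since $T$ is primitive and $\ze_{pq}\neq 1$), with the analogous statement for $S$. Beyond tracking these strict inequalities, no further obstacle arises.
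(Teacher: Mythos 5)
Your proposal is correct and follows essentially the same route as the paper: Corollary~\ref{firstestimate} with $\om(N)=2$ plus Lemma~\ref{intersection} force $\wtn{T}(1)=1$, which gives \eqref{Someq} via \eqref{formdualalt}, and then the bound $\abs{S}^2<2N$ (which you re-derive from $\abs{T}^2>N/2$ rather than quoting it directly from the corollary's chain of inequalities) gives $\wtn{S}(1)<4$. Your closing remark on why the inequality in Corollary~\ref{firstestimate} is strict for $\om(N)=2$ matches the paper's own justification via Proposition~\ref{upperboundmask}.
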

  
  \begin{proof}
   By Corollary \ref{firstestimate} we obtain
   \[\wtn{T}(1)<\frac{2\abs{T}^2}{N}\leq2,\]
   whence $\wtn{T}(1)=1$, due to Lemma \ref{intersection}, yielding \eqref{Someq}. Furthermore,
   \[\wtn{S}(1)<\frac{2\abs{S}^2}{N}<\frac{2\cdot2N}{N}=4,\]
   concluding the proof.
  \end{proof}

  Since $\abs{S}\mid N$ and $\abs{S(\ze_N)}^2\in\ZZ$, Proposition \ref{Som} implies that $\abs{S(\ze_N)}^2$ is not divisible by primes other than $p$ and $q$, and $\abs{S}\geq\sqrt{N}$
  yields
  \[\abs{S(\ze_N)}^2\geq\sqrt{N},\]
  that is, the squared modulus of the algebraic integer $S(\ze_N)\in\ZZ[\ze_N]$ is a relatively large integer with respect to the order of $\ze_N$, i.e. $N$.   
  This leads to the main result of this section.
  
  \begin{thm}\label{finexc}
   Fix $p$, $q$, two distinct primes. Then, possibly with finitely many exceptions, no group $\ZZ_N$ with $\rad(N)=pq$ possesses primitive formally dual subsets.
  \end{thm}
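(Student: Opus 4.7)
The plan is to combine the structural identity from Proposition \ref{Som} with the field descent bound of Theorem \ref{Fbound}, exploiting the fact that the resulting upper estimate for $F(N,n)$ depends only on the fixed primes $p,q$, not on the exponents in $N$. Assume $S,T\ssq\ZZ_N$ are primitive formally dual with $\abs{T}\le\abs{S}$, so that $\abs{S}\ge\sqrt{N}$. Set $n:=\abs{S(\ze_N)}^2$. By Proposition \ref{Som} we have $n=\abs{S}^3/N\in\ZZ$, and since $\abs{S}\mid N$ with $\rad(N)=pq$ every prime divisor of $n$ lies in $\set{p,q}$. Moreover Lemma \ref{intersection} gives $n\ge 1$, and $\abs{S}\ge\sqrt{N}$ yields the lower bound
\[n=\frac{\abs{S}^3}{N}\ge\sqrt{N}.\]

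Next, I would apply Theorem \ref{Fbound} to $S(\ze_N)=\sum_{s\in S}\ze_N^s$ with $m=N$ and $C=1$ (valid since the coefficients are $0$ or $1$), obtaining
\[n\le\frac{F(N,n)^2}{4\vphi(F(N,n))}.\]
The crucial point is now that $F(N,n)$ is bounded in terms of $p,q$ alone. By Proposition \ref{Fproperties}(3), $F(N,n)=F(N,\rad(n))$, and since $\mathscr{D}(n)\ssq\set{p,q}$, the estimate \eqref{pqconst} in Proposition \ref{Cpq} produces a constant $M_0(p,q)$ with $F(N,n)\le M_0(p,q)$, independent of the exponents $\nu_p(N),\nu_q(N)$. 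Consequently $n\le M(p,q)$ for some constant depending only on $p,q$.

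Combining the two inequalities, $\sqrt{N}\le n\le M(p,q)$, so $N\le M(p,q)^2$. Only finitely many integers $N$ with $\rad(N)=pq$ satisfy this, which is precisely the statement of the theorem.

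There is no serious obstacle in the argument: it is essentially an exercise in aligning the lower bound $n\ge\sqrt{N}$ coming from the structural analysis of Section \ref{structural} with the upper bound $n\le F(N,n)^2/(4\vphi(F(N,n)))$ from the field descent method. The only subtle point is verifying that the estimate \eqref{pqconst} genuinely bounds $F(N,n)$ uniformly as the exponents in $N=p^mq^k$ vary, which is guaranteed because $b(p,N,n)$ and $b(q,N,n)$ in Definition \ref{Fmn} depend only on $\nu_p(q^{p-1}-1)$, $\nu_2(q^2-1)$ and $\nu_q(p^{q-1}-1)$. The trade-off is that the argument is non-effective: it does not identify the finite exceptional set explicitly, which is why the theorem is phrased with unspecified finitely many exceptions per pair $(p,q)$.
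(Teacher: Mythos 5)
Your proposal is correct and follows essentially the same route as the paper: it derives the lower bound $n=\abs{S}^3/N\ge\sqrt{N}$ from Proposition \ref{Som}, applies Theorem \ref{Fbound} with $C=1$ to $S(\ze_N)$, and uses Proposition \ref{Cpq} to bound $F(N,n)\le p^aq^b$ uniformly in the exponents, yielding $N\le p^{2a}q^{2b}/(16(1-\tfrac{1}{p})^2(1-\tfrac{1}{q})^2)$. This matches the paper's argument step for step.
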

  
  \begin{proof}
   Let $P=\set{p,q}$ and put $a$, $b$, exactly as in Proposition \ref{Cpq}, hence $F(m,n)\leq p^aq^b$, whenever $\mathscr{D}(n)\ssq\mathscr{D}(m)=P$. Now let $N=p^kq^l$, and
   $T,S\ssq\ZZ_N$ be primitive formally dual, with $\abs{T}\leq\abs{S}$, so that $\abs{S}\geq\sqrt{N}$; also, put $n=\abs{S(\ze_N)}^2$, where $\ze_N=e^{2\pi i/N}$. 
   Then, by Proposition \ref{Som} we get
   \begin{equation}\label{lower}
   n=\frac{\abs{S}^3}{N}\geq\sqrt{N}.
   \end{equation}
   On the other hand, we observe that $S(\ze_N)=\sum_{j\in S}\ze_N^j$, so if we put $X=S(\ze_N)$ as in Theorem \ref{Fbound}, the constant $C$ can be taken equal to $1$. Applying this Theorem
   we obtain
   \begin{equation}\label{upper}
   n\leq \frac{F(N,n)^2}{4\vphi(F(N,n))}=\frac{F(N,n)}{4(1-\frac{1}{p})(1-\frac{1}{q})}\leq\frac{p^aq^b}{4(1-\frac{1}{p})(1-\frac{1}{q})}.
   \end{equation}
   Combining equations \eqref{lower} and \eqref{upper}, we get
   \[N\leq \frac{p^{2a}q^{2b}}{16(1-\frac{1}{p})^2(1-\frac{1}{q})^2},\]
   hence at most finitely many groups $\ZZ_N$ with $\rad(N)=pq$ possess primitive formally dual subsets.
  \end{proof}

Theorem \ref{finexc} tackles the Conjecture when the sum of the exponents of $p$ and $q$ is sufficiently high. Next, we will tackle some cases where one or both of the exponents are
small.

\begin{prop}\label{singleq}
 Let $N=p^aq$, where $p$, $q$ distinct primes. Then $\ZZ_N$ does not have primitive formally dual subsets.
\end{prop}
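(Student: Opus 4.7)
The plan is to combine the rigidity from Proposition \ref{Som} with the field descent method and an averaging identity via Ramanujan sums, progressively narrowing the admissible parameters until no case remains.

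First, WLOG $\abs{T} \leq \abs{S}$. Since $\rad(N) = pq$, Proposition \ref{Som} yields $\wtn{T}(1) = 1$, $\wtn{S}(1) \leq 3$, and the identities $\abs{S(\ze_N)}^2 = \abs{S}^3/N$ and $\abs{T(\ze_N)}^2 = \wtn{S}(1)\abs{T}^3/N$, which must both be positive integers. Integrality of the first is equivalent to $\abs{T}^3 \mid N^2 = p^{2\alpha}q^2$; since $q$ appears to the first power in $N$, this forces $\abs{T} = p^i$ for some $i \geq 1$. Integrality of $\abs{T(\ze_N)}^2 = \wtn{S}(1)\,p^{3i-\alpha}/q$ then forces $q \mid \wtn{S}(1)$, hence $q \in \{2,3\}$ with $\wtn{S}(1) = q$; in particular, $q \geq 5$ is ruled out at once.

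Combining $p^{2i} \leq N$ with the lower bound $p^{2i} = \abs{T}^2 > \vphi(N)$ of Lemma \ref{mainestimate}, and using also $\alpha/3 \leq i \leq 2\alpha/3$ (from integrality of both squared moduli), a direct check shows that $q = 3$ with $p \geq 5$ leaves no admissible $i$, while for $q = 2$ with $p$ odd one must have $2i = \alpha$, and for $q = 3$, $p = 2$ one must have $2i = \alpha + 1$ with $\alpha$ odd and $\alpha \geq 3$. Apply now Theorem \ref{Fbound} to $T(\ze_N) \in \ZZ[\ze_N]$ with $C = 1$: using Proposition \ref{Cpq} we compute $F(N,p) = 2p$ for $q = 2$, and $F(N,2) = 12$ for $q = 3$, $p = 2$, $\alpha \geq 2$, so that the inequalities
\[p^{\alpha/2} \leq \frac{(2p)^2}{4\vphi(2p)} = \frac{p^2}{p-1}, \qquad 2^{(\alpha+3)/2} \leq \frac{12^2}{4\vphi(12)} = 9\]
force $\alpha = 2$ in the first case and $\alpha = 3$ in the second, leaving only the two configurations $N = 2p^2$ (with $\abs{T} = p$) and $N = 24$ (with $\abs{T} = 4$).

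The first is killed by a parity pigeonhole: since $\wtn{T}(1) = 1$ and $\abs{T}^2 - \abs{T} = p(p-1) = \vphi(N)$, every nonzero element of $T - T$ must be a unit of $\ZZ_N$, hence odd, so the $p \geq 3$ elements of $T$ would have pairwise distinct residues modulo $2$, which is impossible. For $N = 24$, we have $\abs{T(\ze_{24})}^2 = 8$ and, by Galois invariance, $\abs{T(\ze_{24}^g)}^2 = 8$ for every $g \in \ZZ_{24}^\star$. Expanding $\abs{T(\ze_{24}^g)}^2 = \sum_{t,t' \in T} \ze_{24}^{(t-t')g}$ and summing over $g$, the left-hand side equals $8\vphi(24) = 64$, while the right-hand side rewrites via the Ramanujan sums of \eqref{calcrsums} (which vanish except when $\gcd(k,24) \in \{4,8,12\}$) as
\[\abs{T}\,\vphi(24) + \sum_{k \neq 0} \wtn{T}(k)\, C_{24}(k) = 32 + 8\bigl(\wtn{T}(4) - \wtn{T}(8) - \wtn{T}(12)\bigr),\]
forcing $\wtn{T}(4) \geq 4$. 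But the total multiplicity of non-unit nonzero differences is $\abs{T}^2 - \abs{T} - \vphi(24) = 4$ and the symmetry $\wtn{T}(4) = \wtn{T}(20)$ caps $\wtn{T}(4) \leq 2$, a contradiction. The main obstacle I anticipate is precisely this $N = 24$ endpoint, where both field-descent bounds are only just satisfied and the Ramanujan-sum averaging is what finally extracts a usable arithmetic constraint.
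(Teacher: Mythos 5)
Your proof is correct, and its skeleton coincides with the paper's: Proposition \ref{Som} plus integrality pins down $\abs{T}=p^i$ and $q\mid\wtn{S}(1)$, forcing $q\in\{2,3\}$; the size bounds of Lemma \ref{mainestimate} eliminate odd $N$ and fix the exponent up to parity; and the field descent bound of Theorem \ref{Fbound} reduces $p=2$, $q=3$ to $N=24$. (A nice bonus of your integrality constraint $a/3\le i\le 2a/3$ is that it disposes of $a=1$ internally, where the paper defers to the square-free result of \cite{XPC}.) The two terminal cases, however, are finished differently. For $q=2$ the paper is more economical: once $\abs{T}=p^{a/2}$ is known, Corollary \ref{firstestimate} gives $\wtn{T}(1)<2\abs{T}^2/N=1$, contradicting Lemma \ref{intersection} immediately and for all even $a$, whereas you first invoke the field descent to force $a=2$ and then run a parity pigeonhole on $T\ssq\ZZ_{2p^2}$ --- correct, but two extra steps. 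For $N=24$ the paper pins down the multiset $T-T$ class by class (using that $4\nmid\abs{S}$) until $T$ must contain a pair $\set{t,t+12}$, whence $\abs{T(\ze_{24})}^2\le4<8$; your alternative --- summing $\abs{T(\ze_{24}^g)}^2=8$ over $g\in\ZZ_{24}^{\star}$ and evaluating the right-hand side by Ramanujan sums to force $\wtn{T}(4)\ge4$ against the difference budget of $4$ --- is arguably cleaner and avoids any case analysis on which divisor classes $T-T$ meets. Both endgames check out; I verified the Ramanujan-sum computation ($C_{24}(k)=4,-4,-8$ for $\gcd(k,24)=4,8,12$ and $0$ otherwise for $k\ne0$) and the resulting identity $\wtn{T}(4)-\wtn{T}(8)-\wtn{T}(12)=4$.
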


\begin{proof}
  We remind that if $a=1$ then $N$ is square--free, so this is already proven by \cite{XPC}; we may assume that $a>1$.
 Suppose on the contrary that $T$, $S$ are such subsets, with $\abs{T}<\abs{S}$ as usual (equality cannot occur in this case, as $N$ is not a square), therefore
 \begin{equation}\label{sqrt}
 \sqrt{\vphi(N)}<\abs{T}<\sqrt{N}<\abs{S}<\frac{N}{\sqrt{\vphi(N)}}.
 \end{equation}
 
 By Proposition \ref{Som} we get
 \[\abs{S(\ze_N)}^2=\frac{\abs{S}^3}{N},\]
 hence $q\mid\abs{S}$ and $q\nmid\abs{T}$, so $\abs{T}=p^b$ and $\abs{S}=p^{a-b}q$ for some integer $b$. We recall that
 \begin{equation}\label{Tom}
 \abs{T(\ze_N)}^2=\frac{\abs{T}^3}{N}\bs1_S\ast\bs1_{-S}(1).
 \end{equation}
 Since the latter is an integer, $q\mid N$ and $q\nmid\abs{T}$, we must have 
 \begin{equation}\label{qS}
 q\mid\wtn{S}(1).
 \end{equation} 
 Suppose first that $q$ is odd; then by Proposition \ref{Som} we must have $q=\wtn{S}(1)=3$
 so that $N=3p^a$ and $\vphi(N)=2(p-1)p^{a-1}$. By virtue of Lemma \ref{mainestimate} we obtain
 \[\sqrt{2(p-1)}p^{\frac{a-1}{2}}<p^b<\sqrt{3}p^{\frac{a}{2}}.\]
 If $N$ is odd, then $p>3$, so we have $\sqrt{2(p-1)}>\sqrt{p}$, so the above inequalities lead to
 \[p^{\frac{a}{2}}<p^b<p^{\frac{a+1}{2}}\]
 which is a contradiction, as there is no such integer $b$.
 
 Now suppose that $p=2$, so that $N=2^a3$. Then $\abs{T}=2^b$ and by \eqref{sqrt} we get
 \[2^{\frac{a}{2}}<2^b<2^{\frac{a}{2}}\sqrt{3},\]
 yielding $\abs{T}=2^{\frac{a+1}{2}}$ and $\abs{S}=2^{\frac{a-1}{2}}3$; also, $a$ must be odd. Equations \eqref{Tom} and \eqref{qS} yield
 \[\abs{T(\ze_N)}^2=2^{\frac{a+3}{2}}.\]
 Applying Theorem \ref{Fbound} and equation \eqref{Fm2q}, we get
 \[\abs{T(\ze_N)}^2\leq\frac{F(2^a3,2)}{4\vphi(F(2^a3,2))}=\frac{12^2}{4\vphi(12)}=9,\]
 and since $a$ is odd, the only solution we get from $2^{\frac{a+3}{2}}\leq9$ is $a=3$. Hence, $N=24$, $\abs{T}=4$, $\abs{S}=6$, and $\abs{T(\ze_N)}^2=8$. Now we will try to determine
 $T-T$ as a multi-set. Since $12=\abs{T}^2-\abs{T}<2\vphi(24)=16$ it only contains $\ZZ_{24}^{\star}$ with multiplicity one. The rest of the $4=\abs{T}^2-\abs{T}-\vphi(24)$ differences
 between different elements of $T$ must belong to other divisor classes. Two of these classes must necessarily be from 
 \[12\ZZ_{24}^{\star}=\set{12},\	6\ZZ_{24}^{\star}=\set{6,18},\	3\ZZ_{24}^{\star}=\set{3,9,15,21},\]
 otherwise $4\mid \abs{S}$, a contradiction. Thus, the only possibility is for $12\ZZ_{24}^{\star}$ to appear with multiplicity $2$ and $6\ZZ_{24}^{\star}$ with
 multiplicity $1$. This means, that there is some $t\in T$, such that $t+12\in T$, say $T=\set{u,v,t,t+12}$. But then,
 \[\abs{T(\ze_N)}^2=\abs{\ze_N^u+\ze_N^v+\ze_N^t-\ze_N^t}^2=\abs{\ze_N^u+\ze_N^v}^2\leq 4<8,\]
 a contradiction. Therefore, neither in this case do exist primitive formally dual subsets.
 
 Finally, suppose that $q=2$, so that $N=2p^a$, $\abs{T}=p^b$, $\abs{S}=2p^{a-b}$. By \eqref{sqrt} we have the bounds
 \[\sqrt{2}p^{\frac{a}{2}}<2p^{a-b}<\frac{2p^{\frac{a+1}{2}}}{\sqrt{p-1}},\]
 or equivalently,
 \[\frac{1}{\sqrt{2}}p^{\frac{a}{2}}<p^{a-b}<\frac{p^{\frac{a+1}{2}}}{\sqrt{p-1}},\]
 which can only happen if $a$ is even and $a=2b$. Hence, $\abs{S}=2p^\frac{a}{2}$ and $\abs{T}=p^{\frac{a}{2}}$; however, this contradicts Corollary \ref{firstestimate}
 and Lemma \ref{intersection}, as
 \[\wtn{T}(1)<\frac{2\abs{T}^2}{N}=1,\]
 completing the proof.
\end{proof}

\begin{prop}\label{poddq2}
 Let $N=p^aq^2$, where $p$, $q$ distinct primes and $a$ is odd. Then $\ZZ_N$ does not have primitive formally dual subsets.
\end{prop}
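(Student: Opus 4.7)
The strategy parallels that of Proposition \ref{singleq}: first pin down the possible sizes of $T$ and $S$ using the integrality and size bounds, and then invoke the field descent method to obtain a contradiction for large $a$, leaving a finite residue to be handled directly.

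Suppose $T,S$ are primitive formally dual with $\abs{T}\leq\abs{S}$. Since $a$ is odd, $N$ is not a square, so the middle inequalities of \eqref{main2} are strict and $\abs{T}<\sqrt{N}<\abs{S}$. Writing $\abs{T}=p^bq^c$ and $\abs{S}=p^{a-b}q^{2-c}$, the identity $\abs{S(\ze_N)}^2=\abs{S}^3/N=p^{2a-3b}q^{4-3c}$ of Proposition \ref{Som} forces $c\in\set{0,1}$. The case $c=0$ is ruled out because then $\abs{T(\ze_N)}^2=K p^{3b-a}/q^2$ (with $K=\wtn{S}(1)\leq3$) would fail to be an integer, since $q^2\nmid K$. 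Hence $c=1$, and the integrality of both masks yields $a/3\leq b\leq 2a/3$.

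Next, I plug $\abs{T}=p^bq$ into $\sqrt{\vphi(N)}<\abs{T}<\sqrt{N}$ from Lemma \ref{mainestimate}. Since $a$ is odd, the strict upper bound gives $b\leq(a-1)/2$, while the strict lower bound rearranges to $p^b>p^{(a-1)/2}\sqrt{(p-1)(q-1)/q}$. A short case check on $p$ shows that these are jointly consistent only when $(p-1)(q-1)<q$, which forces $p=2$, and then $b=(a-1)/2$ is the only possibility. Together with $a/3\leq b$, this gives $a\geq 3$, and we arrive at
\[N=2^aq^2,\quad \abs{T}=2^{(a-1)/2}q,\quad \abs{S}=2^{(a+1)/2}q,\quad \abs{S(\ze_N)}^2=2^{(a+3)/2}q.\]

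The heart of the argument is then the field descent method applied to $S(\ze_N)=\sum_{s\in S}\ze_N^s$. Taking $C=1$ in Theorem \ref{Fbound},
\[2^{(a+3)/2}q\leq\frac{F(N,2q)^2}{4\vphi(F(N,2q))},\]
and formula \eqref{Fm2q} of Proposition \ref{Cpq}, together with Proposition \ref{Fproperties}(4), shows that $F(N,2q)$ stabilizes at a constant depending only on $q$ once $a$ exceeds an explicit threshold. Since the left hand side grows exponentially in $a$, this yields an upper bound on $a$ for each fixed $q$. The main obstacle is then handling the finite residual set of small pairs $(a,q)$ for which this generic estimate is too weak; for these one must either refine the analysis by applying Theorem \ref{Fbound} also to $T(\ze_N)$, whose squared modulus $K\cdot 2^{(a-3)/2}q$ involves the same two primes, or perform a direct combinatorial analysis of the multiset $T-T$ in the style of the $N=24$ subcase at the end of Proposition \ref{singleq}, exploiting that $\wtn{S}(1)\in\set{1,2,3}$ severely constrains the multiplicities of the divisor classes of $\ZZ_N$ appearing in $T-T$.
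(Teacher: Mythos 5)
Your first half is sound and essentially matches the paper: you correctly reduce to $q\pdiv\abs{T}$, $q\pdiv\abs{S}$ via the integrality of $\abs{S(\ze_N)}^2=\abs{S}^3/N$ and $\abs{T(\ze_N)}^2=\wtn{S}(1)\abs{T}^3/N$ together with $\wtn{S}(1)\leq3$, and the size bounds of Lemma \ref{mainestimate} then force $p=2$ exactly as in the paper. (One small quibble: your inference $b\geq a/3$ from integrality is not airtight when $p\in\set{2,3}$, since $\wtn{S}(1)\leq 3$ could absorb one factor of $p$; but this is harmless because it is superseded by $b=(a-1)/2$.)

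The genuine gap is the endgame. Once you know $p=2$ and $\abs{T}\leq 2^{(a-1)/2}q$, you propose to run the field descent method on $\abs{S(\ze_N)}^2=2^{(a+3)/2}q$, which only bounds $a$ for each fixed $q$ and, as you yourself acknowledge, leaves ``a finite residual set of small pairs $(a,q)$'' that you do not actually handle — your two suggested remedies are sketches, not arguments, so the proof is incomplete as written. Moreover, the field descent detour is unnecessary: the case $p=2$ dies immediately from Corollary \ref{firstestimate}. Indeed,
\[\wtn{T}(1)<\frac{2\abs{T}^2}{N}\leq\frac{2\cdot 2^{a-1}q^2}{2^{a}q^2}=1,\]
so $\wtn{T}(1)=0$, contradicting Lemma \ref{intersection}, which guarantees $(T-T)\cap\ZZ_N^{\star}\neq\vn$ for a primitive set when $\rad(N)$ has at most two prime factors. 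This is exactly how the paper closes the proof: the point is that for even $N$ with two prime factors the bound $2\abs{T}^2/N$ of Corollary \ref{firstestimate} is strictly sharper than the $\abs{T}^2/\vphi(N)$ bound you were relying on, and it turns the last step into a one-line contradiction with no residual cases. You should replace your field-descent finish with this observation.
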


\begin{proof}
 Throughout the proof we assume that $a\geq3$, as the case $a=1$ is covered by Proposition \ref{singleq}.
 As before, we assume that $T$, $S$ are primitive formally dual subsets of $\ZZ_N$, with $\abs{T}<\abs{S}$, as $N$ is not a square, hence
 \[\sqrt{\vphi(N)}<\abs{T}<\sqrt{N}<\abs{S}<\frac{N}{\sqrt{\vphi(N)}}.\]
 Proposition \ref{Som} implies that $q\mid \abs{S}$. We will also show that $q\mid\abs{T}$; suppose on the contrary that $q\nmid\abs{T}$. Then $q^2\mid\bs1_S\ast\bs1_{-S}(1)$, since
 \[\abs{T(\ze_N)}^2=\frac{\abs{T}^3}{N}\bs1_S\ast\bs1_{-S}(1),\]
 contradicting Proposition \ref{Som}. 
 
 Thus, $q$ divides both $\abs{T}$ and $\abs{S}$, hence $\abs{T}\leq p^kq$, where $a=2k+1$. This leads to
 $\abs{T}^2-\abs{T}\leq p^kq(p^kq-1)$, while $\vphi(N)=p^{2k}q(p-1)(q-1)$. Since $\abs{T}^2-\abs{T}\geq\vphi(N)$, we get $p^kq-1\geq p^k(p-1)(q-1)$; when $p$ is odd, the latter
 is $\geq2p^k(q-1)\geq p^kq$, a contradiction. Hence, $p=2$, and applying Corollary \ref{firstestimate} we obtain
 \[\wtn{T}(1)<\frac{2\abs{T}^2}{N}\leq\frac{2\cdot2^{2k}q^2}{2^{2k+1}q^2}=1,\]
 contradicting Lemma \ref{intersection}, as desired.
\end{proof}

\begin{prop}\label{p4q3}
  Let $N=p^4q^3$. Then $\ZZ_N$ has no primitive formally dual subsets.
 \end{prop}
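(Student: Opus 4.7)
The plan is to combine the cardinality constraints from Proposition~\ref{Som} and Lemma~\ref{mainestimate} with the field descent bound of Theorem~\ref{Fbound}, in the spirit of Propositions~\ref{singleq} and~\ref{poddq2}, reducing the problem to a short list of admissible sizes $(\abs{T},\abs{S})$ and then ruling each one out. Assume $T,S\subseteq\ZZ_N$ are primitive formally dual with $\abs{T}\leq\abs{S}$; since $N=p^4q^3$ is not a perfect square, the inequality is strict. By Proposition~\ref{Som}, $\wtn{T}(1)=1$, $\abs{S(\ze_N)}^2=\abs{S}^3/N$, and $\wtn{S}(1)\in\{1,2,3\}$. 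Writing $\abs{T}=p^aq^b$ (so $\abs{S}=p^{4-a}q^{3-b}$), integrality of $\abs{S(\ze_N)}^2=p^{8-3a}q^{6-3b}$ forces $a,b\leq 2$, while integrality of $\abs{T(\ze_N)}^2=p^{3a-4}q^{3b-3}\wtn{S}(1)$ together with $\wtn{S}(1)\leq 3$ forces $a,b\geq 1$; combining with $\abs{T}<\abs{S}$ then leaves $(a,b)\in\{(1,1),(1,2),(2,1)\}$, with $(1,2)$ additionally requiring $q<p^2$.

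Next I would invoke Lemma~\ref{mainestimate} to rule out $(a,b)=(1,1)$ (which would force the absurd inequality $1>p(p-1)(q-1)$), to force $q=2$ in case $(2,1)$ (since $p>(p-1)(q-1)$), and to narrow case $(1,2)$: combining the constraint $p\mid\wtn{S}(1)\leq 3$ (so $p\in\{2,3\}$) with $q^2>p(p-1)(q-1)$ and $q<p^2$ reduces the list to $(p,q)\in\{(2,3),(3,5),(3,7)\}$. For each of these three triples I would apply Theorem~\ref{Fbound} with $C=1$ to $S(\ze_N)$, evaluating $F(N,pq)$ directly from Proposition~\ref{Cpq}; a short calculation shows that $\abs{S(\ze_N)}^2>F(N,pq)^2/(4\vphi(F(N,pq)))$ in each of the three cases, yielding the required contradiction.

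The main obstacle is case $(2,1)$ with $q=2$: here $N=8p^4$, $\abs{T}=2p^2$, $\abs{S}=4p^2$, and $\abs{S(\ze_N)}^2=8p^2$, where $p$ is an arbitrary odd prime. From Proposition~\ref{Cpq} one has $F(N,2p)=2^cp^{\min(v,4)}$ for some $c\in\{2,3\}$ and $v:=\nu_p(2^{p-1}-1)\geq 1$, and the field descent bound applied to $S(\ze_N)$ already produces an immediate contradiction whenever $v\leq 2$. To handle the remaining values of $v$ I would apply Theorem~\ref{Fbound} a second time, now to $T(\ze_N)$, whose squared modulus $p^2\wtn{S}(1)\in\{p^2,2p^2,3p^2\}$ has a smaller arithmetic footprint, and evaluate the corresponding $F(N,p)$, $F(N,2p)$, or $F(N,3p)$ via Proposition~\ref{Cpq}: for $\wtn{S}(1)=1$ the bound forces $p\leq 5$, while for $\wtn{S}(1)\in\{2,3\}$ the combination with the $S$-side estimate leaves only finitely many explicit pairs $(p,\wtn{S}(1))$ to be verified by direct numerical computation. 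This last case analysis is where I expect the bulk of the technical work to lie.
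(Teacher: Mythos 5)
Your reduction to the three admissible size patterns $(\abs{T},\abs{S})\in\set{(pq,\,p^3q^2),\,(pq^2,\,p^3q),\,(p^2q,\,p^2q^2)}$ is correct and coincides with the paper's (which organizes the same cases by whether $p$ divides $\abs{T}$ exactly or to the second power), and your treatment of the first two patterns is sound: $(1,1)$ dies by Lemma \ref{mainestimate}, and in case $(1,2)$ the forced divisibility $p\mid\wtn{S}(1)\leq3$ gives $p\in\set{2,3}$, after which Theorem \ref{Fbound} applied to $\abs{S(\ze_N)}^2=\abs{S}^3/N$ eliminates $(2,3)$, $(3,5)$ and $(3,7)$. (The paper removes $(3,7)$ arithmetically, via $\wtn{S}(1)=3$ and $3\vphi(N)<\abs{S}^2$, before invoking field descent, but your route through the $F$-bound also works for that pair.)

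The genuine gap is in case $(2,1)$ with $q=2$, which you rightly flag as the obstacle but do not close, and your sketched plan cannot close it. Writing $v=\nu_p(2^{p-1}-1)$, one has $F(N,2p)\leq 8p^{\min(4,v)}$ by Proposition \ref{Cpq}, so the $S$-side inequality $8p^2\leq F(N,2p)\,p/(2(p-1))$ is violated precisely when $v\leq2$; for $v\geq3$ it gives nothing. On the $T$-side, for $\wtn{S}(1)=2$ the relevant quantity is again $F(N,2p)$ and the bound $2p^2\leq F(N,2p)\,p/(2(p-1))$ is satisfied for every $p$ once $v\geq2$, while for $\wtn{S}(1)=3$ and $p\neq3$ the number $n=3p^2$ has $3\notin\mathscr{D}(N)$, so Proposition \ref{Cpq} does not even apply. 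Since the set of primes with $p^3\mid 2^{p-1}-1$ is not known to be finite, the claim that only ``finitely many explicit pairs'' survive for numerical verification is unjustified, and the proposition asserts the result for all $p$. The case is in fact vacuous for an elementary reason, and this is exactly how the paper handles it: here $\abs{T}=2p^2$, so $2\abs{T}^2=8p^4=N$, and Corollary \ref{firstestimate} --- whose inequality is strict because $\om(N)=2$ --- yields $\wtn{T}(1)<2\abs{T}^2/N=1$, contradicting Lemma \ref{intersection}. You already used this corollary implicitly through Proposition \ref{Som}; applying it with the exact value of $\abs{T}$ finishes this case in one line and removes the need for any field descent there.
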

 
 \begin{proof}
  Let $T,S\ssq\ZZ_N$ be such a pair with $\abs{T}<\abs{S}$. Since $\abs{S(\ze_N)}^2\in\ZZ$, Proposition \ref{Som} implies that $p^2q\mid\abs{S}$. Furthermore, $pq\mid\abs{T}$, since $\wtn{S}(1)$ cannot be divided
  by a cube of an integer $>1$. If $p\pdiv \abs{T}$, then we must necessarily have $p\mid\wtn{S}(1)$, which can only happen if $p=2$ or $p=3$. If $p=2$, so that $N=16q^3$, then $\abs{T}=2q^2$; the only other
  possibility would be $\abs{T}=2q$, but this contradicts Corollary \ref{firstestimate}, as $2\abs{T}^2=8q^2<N$ in this case. Hence, $\abs{S}=8q$, and since $\abs{S}>\abs{T}$, we must have $q=3$ and
  $\abs{S(\ze_N)}^2=32$ by Proposition \ref{Som}. Applying Theorem \ref{Fbound} and Propositions \ref{Fproperties} and \ref{Cpq} we get
  \[32\leq\frac{F(N,32)^2}{4\vphi(F(N,32))}=\frac{F(2^43^3,2)}{4/3}=\frac{4\gcd(3^3,2^2-1)}{4/3}=9,\]
  a contradiction. Assume next that $p=3$. If $\abs{T}=3q$, then $\abs{T}^2<\vphi(N)$, hence the only possibility that remains by Lemma \ref{mainestimate} is $\abs{T}=3q^2$ and $\abs{S}=27q$. Since
  $\abs{S}>\abs{T}$, we must have $q<9$. Furthermore, since $\abs{T}^2>\vphi(N)$ by Lemma \ref{mainestimate}, we should have
  \[9q^4>27q^2\cdot2(q-1)\Leftrightarrow q^2>6(q-1),\]
  which yields $q>4$. Lastly, since $3\mid\wtn{S}(1)$, we must have $\wtn{S}(1)=3$ by Proposition \ref{Som}, which implies $3\vphi(N)<\abs{S}^2$ by Lemma \ref{mainestimate}. Hence,
  \[3^4q^2\cdot2(q-1)<3^6q^2\Leftrightarrow2(q-1)<9,\]
  yielding $q\leq 5$, thus $q=5$, and $\abs{S(\ze_N)}^2=3^5$ by Proposition \ref{Som}. Applying Theorem \ref{Fbound} and Propositions \ref{Fproperties} and \ref{Cpq} we get
  \[3^5\leq\frac{F(N,3^5)^2}{4\vphi(F(N,3^5))}=\frac{F(N,3)}{4\cdot\frac{2}{3}\cdot\frac{4}{5}}<\frac{1}{2}\cdot3\gcd(5^3,3^4-1)=\frac{15}{2},\]
  a contradiction.
  
  Therefore, $p\pdiv \abs{T}$ cannot hold; $p^2q$ divides both $\abs{T}$ and $\abs{S}$. The only possibility is $\abs{T}=p^2q$ and $\abs{S}=p^2q^2$. The inequality $\vphi(N)<\abs{T}^2$ would then imply
  $(p-1)(q-1)<p$, which only holds when $q=2$. Applying Corollary \ref{firstestimate} we establish a contradiction, as $2\abs{T}^2=8p^4=N$, completing the proof.
 \end{proof}

Lastly, we prove the following:

\begin{prop}\label{p2q2}
 Let $N=p^2q^2$, for $p$, $q$ distinct primes. Then, $\ZZ_N$ does not have primitive formally dual subsets.
\end{prop}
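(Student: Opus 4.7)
The plan is to reduce to the tight case $|T|=|S|=pq$ with $\wtn{T}(1)=\wtn{S}(1)=1$, then combine Theorem \ref{FDM} with a Parseval-type argument on the descended polynomial.

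Since $\rad(N)=pq$, Lemma \ref{intersection} guarantees $\wtn{T}(1),\wtn{S}(1)\geq 1$, so Lemma \ref{mainestimate} applies. Assuming $|T|\leq|S|$, the quantity $|T|$ is a divisor of $N$ in the range $(\sqrt{\vphi(N)},\sqrt{N}]=(\sqrt{pq(p-1)(q-1)},pq]$. The candidate divisors $1,p,q,p^2,q^2$ are each ruled out either by Corollary \ref{firstestimate} or by Lemma \ref{mainestimate} (the case analysis runs parallel to the final paragraph of the proof of Proposition \ref{p4q3}), leaving only $|T|=pq$ and hence $|S|=pq$. At this size, Corollary \ref{firstestimate} yields $\wtn{T}(1),\wtn{S}(1)<2$, so $\wtn{T}(1)=\wtn{S}(1)=1$, and \eqref{formdualalt} gives $|T(\ze_N)|^2=|S(\ze_N)|^2=pq$.

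Next, Theorem \ref{FDM} produces $S(\ze_N)=\ze_N^{j}B$ with $B\in\ZZ[\ze_m]$ and $m:=F(N,pq)$. Equivalently, the translate $S':=S-j$ is invariant under the subgroup
\[H:=\{g\in(\ZZ/N\ZZ)^{\times}:g\equiv 1\bmod m\},\]
of order $N/m$ (since $\rad(m)=\rad(N)=pq$, so $\vphi(N)/\vphi(m)=N/m$). A direct computation of the $H$-orbit sum
\[\sum_{g\in H}\ze_N^{gx}=\ze_N^{x}\sum_{c=0}^{N/m-1}\ze_{N/m}^{cx}\]
shows that it vanishes unless $(N/m)\mid x$, and equals $(N/m)\ze_N^{x}$ otherwise. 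Consequently the contribution to $B$ from each orbit outside $(N/m)\ZZ_N$ is zero, so
\[B=\sum_{s\in S'\cap(N/m)\ZZ_N}\ze_N^{s}=T_0(\ze_m)\]
for the subset $T_0:=\{t\in\ZZ_m:(N/m)t\in S'\}\subseteq\ZZ_m$.

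Finally, Parseval's identity applied to $T_0$ in $\ZZ_m$, combined with the Galois-invariance $|T_0(\ze_m^{y})|^2=pq$ for every $y$ coprime to $m$, gives
\[m|T_0|=\sum_{y\in\ZZ_m}|T_0(\ze_m^{y})|^2\geq|T_0|^2+\vphi(m)\cdot pq,\]
i.e., $|T_0|^2-m|T_0|+\vphi(m)pq\leq 0$; for this quadratic in $|T_0|$ to admit a real solution, one needs $m^2\geq 4\vphi(m)pq$. In the generic case $m=pq$ this reads $pq\geq 4(p-1)(q-1)$, which \emph{fails} for every pair of distinct primes $p,q$, yielding the contradiction. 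For the exceptional cases where $m\in\{p^2q,pq^2,p^2q^2\}$---arising from the Wieferich-like conditions $\nu_p(q^{p-1}-1)\geq 2$ or $\nu_q(p^{q-1}-1)\geq 2$---the Parseval bound is no longer tight, and the argument is completed by invoking Proposition \ref{modd} at primitive $p$th, $q$th and $pq$th roots of unity, which yields the identity $|T(\omega_p)|^2+|T(\omega_q)|^2=|T(\omega_{pq})|^2$ and its analogue for $S$; coupled with Lemma \ref{deriv} and the orbit structure above, this forces a contradiction from the incompatibility of the rigid structure on $S'$ with the primitivity hypothesis.

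The main obstacle lies in the Wieferich-exceptional cases, particularly when both valuation conditions hold simultaneously, so that $m=N$ and the field descent is trivial; there the interaction between the polynomial identities of Section \ref{polynomial}, the orbit-vanishing structure derived from Theorem \ref{FDM}, and the primitivity of $S$ must be navigated with care.
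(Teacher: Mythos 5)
Your reduction to $\abs{T}=\abs{S}=pq$ with $\wtn{T}(1)=\wtn{S}(1)=1$ and $\abs{T(\ze_N)}^2=\abs{S(\ze_N)}^2=pq$ matches the paper (though the paper eliminates $\abs{T}=p^2$ via the integrality of $\abs{S(\ze_N)}^2=q^4/p^2$ rather than the size bounds, which is cleaner). From there, however, your argument has a genuine gap. The field-descent/Parseval mechanism --- descending $S(\ze_N)$ to $T_0(\ze_m)$ with $m=F(N,pq)$ and deriving $m^2\geq 4\vphi(m)pq$ --- is sound as far as it goes, but it only yields a contradiction when $m=pq$, and this ``generic'' case is much narrower than you suggest. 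For even $N$ it never occurs: by Proposition \ref{Cpq}, $4\mid F(4q^2,2q)$ for every odd prime $q$ (since $\nu_2(q^2-1)\geq 3$), so $m\geq 4q$, and for $m=4q$ the discriminant condition $m^2\geq 4\vphi(m)\cdot 2q$ reads $16q^2\geq 16q(q-1)$, which always holds --- your argument therefore says nothing even about $N=36$. For odd $N$ the Wieferich-type exceptions $m\in\set{p^2q,\,pq^2,\,p^2q^2}$ remain, and when both valuations are $\geq 2$ the descent is vacuous ($m=N$), as you acknowledge.

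These deferred cases are not a technicality: they are where essentially all of the difficulty of the proposition lives, and your final paragraph replaces them with an assertion that Proposition \ref{modd}, Lemma \ref{deriv} and ``the orbit structure'' will force a contradiction, with no argument supplied. (Also, the identity $\abs{T(\ze_p)}^2+\abs{T(\ze_q)}^2=\abs{T(\ze_{pq})}^2$ comes from Lemma \ref{mainformula} applied with $d=N$, not from Proposition \ref{modd}.) The paper's proof makes no use of field descent here at all; after the same reduction it runs an extended case analysis on which of $T(\ze_{pq})$, $T(\ze_N^p)$, $T(\ze_p)$, etc.\ vanish, using Lemma \ref{mainformula} at various divisors, the vanishing-sum structure of Theorem \ref{pqcycles}, derivative congruences via Lemma \ref{deriv}, and a final counting argument with Proposition \ref{basic} at $X=1$. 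Your descent-plus-Parseval idea is an attractive shortcut for the generic odd case, but as written the proof is incomplete precisely where the real work is required.
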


\begin{proof}
 As usual, suppose that such subsets exist, with $\abs{T}\leq\abs{S}$. Without loss of generality assume $p<q$; then $\abs{T}\in\set{p,q,p^2,pq}$. If $\abs{T}=p$ or $\abs{T}=q$,
 then $\vphi(N)=pq(p-1)(q-1)\geq2q(q-1)>q^2\geq\abs{T}^2$, contradicting Lemma \ref{mainestimate} and Lemma \ref{intersection}. If $\abs{T}=p^2$, then $\abs{S}=q^2$ and \eqref{Someq}
 yields
 \[\abs{S(\ze_N)}^2=\frac{q^4}{p^2}\notin\ZZ,\]
 a contradiction. Thus, $\abs{T}=\abs{S}=pq$, and Proposition \ref{Som} applies to $T$
 as well, that is
 \[\abs{S(\ze_N)}^2=\abs{T(\ze_N)}^2=pq,\]
 or equivalently, $\wtn{T}(1)=\wtn{S}(1)=1$. Applying Lemma \ref{mainformula} for $d=N$ we obtain
 \[\frac{pq}{(pq)^{3/2}}=\frac{1}{pq(pq)^{3/2}}\ent{\abs{T(1)}^2-\abs{T(\ze_p)}^2-\abs{T(\ze_q)}^2+\abs{T(\ze_{pq})}^2},\]
 or equivalently,
 \begin{equation}\label{TT2}
  \abs{T(\ze_{pq})}^2=\abs{T(\ze_p)}^2+\abs{T(\ze_q)}^2,
 \end{equation}
 and similarly for $S$,
 \begin{equation}\label{ST2}
  \abs{S(\ze_{pq})}^2=\abs{S(\ze_p)}^2+\abs{S(\ze_q)}^2.
 \end{equation}
 Suppose first that $T(\ze_{pq})=0$; from \eqref{TT2} we also get $T(\ze_p)=T(\ze_q)=0$, hence $T(X)$ is divided by $\frac{X^{pq}-1}{X-1}=1+X+\dotsb+X^{pq-1}$. Since $T(1)=\abs{T}=pq$,
 we have
 \[T(X)\equiv 1+X+\dotsb+X^{pq-1}\bmod(X^{pq}-1).\]
 This implies that $\abs{T_{j,pq}}=1$, for all $0\leq j\leq pq-1$, therefore
 \[\wtn{T}(pq)=\wtn{T}(p^2q)=\wtn{T}(pq^2)=0,\]
 or $S(\ze_{pq})=S(\ze_p)=S(\ze_q)=0$. Moreover, $T_{j,pq}(X)$ are monomials with coefficient $1$; since $\wtn{T}(1)=1$, we have
 \[\bigcup_{j\in\ZZ_{pq}}(T_{j,pq}-T_{j-1,pq})=1+pq\ZZ_N.\]
 Taking mask polynomials on both sides we obtain
 \[X(1+X^{pq}+\dotsb+X^{pq(pq-1)})\equiv \sum_{j\in\ZZ_{pq}}T_{j,pq}(X)T_{j-1,pq}(X^{-1})\bmod(X^N-1).\]
 Differentiating both sides with respect to $X$ and then setting $X=1$ we get
 \begin{eqnarray*}
 pq+pq\frac{pq(pq-1)}{2} &\equiv& \sum_{j\in\ZZ_{pq}}T'_{j,pq}(1)T_{j-1,pq}(1)-T_{j,pq}(1)T'_{j-1,pq}(1)\bmod N\\
 &\equiv& \sum_{j\in\ZZ_{pq}}T'_{j,pq}(1)-T'_{j-1,pq}(1) \equiv0\bmod N,
 \end{eqnarray*}
 a contradiction, since the left hand side can never be divisible by $N$; if $N$ is odd, it is $pq\bmod N$, otherwise, we have (say) $p=2$, and it is $2q\bmod 2q^2$.
 
 Therefore, we assume that $T(\ze_{pq})S(\ze_{pq})\neq0$. We observe that if $T(\ze_N^p)=0$, then
 \[T(X^p)\equiv P_p(X^p)\Php+Q_p(X^p)\Phq\bmod(X^N-1),\]
 for some $P_p,Q_p\in\ZZ[X]$ with nonnegative coefficients. Since
 \[pq=T(1)=pP_p(1)+qQ_p(1),\]
 we have either $P_p(1)=q$ and $Q_p\equiv0$ or $P_p\equiv0$ and $Q_p(1)=p$. The former case implies
 \[T(X^{pq})\equiv P_p(X^{pq})\Php\bmod(X^N-1),\]
 contradicting $T(\ze_{pq})\neq0$. Thus, $P_p\equiv0$, establishing
 \[T(X^p)\equiv Q_p(X^p)\Phq\bmod(X^N-1),\]
 whence $T(\ze_{N}^{p^2})=0$. Working with $S$ instead of $T$ and $q$ instead of $p$, we get the following implications:
 \begin{claim}\label{implications}
 If $T(\ze_{pq})S(\ze_{pq})\neq0$, then
 \begin{eqnarray*}
  T(\ze_N^p)=0\Rightarrow T(\ze_N^{p^2})=0,\\
  T(\ze_N^q)=0\Rightarrow T(\ze_N^{q^2})=0,\\
  S(\ze_N^p)=0\Rightarrow S(\ze_N^{p^2})=0,\\
  S(\ze_N^q)=0\Rightarrow S(\ze_N^{q^2})=0.
 \end{eqnarray*}
 \end{claim}

 We suppose first that $T(\ze_N^p)=0$.
 Applying Lemma \ref{mainformula} for $d=p^2$ we get
 \[\abs{S(\ze_N^{q^2})}^2=\abs{S(\ze_N^{q})}^2,\]
 and another application for $d=p$ yields
 \[-\frac{1}{\sqrt{p}}\abs{T(\ze_N)}^2=-\frac{1}{q\sqrt{p}}\abs{S(\ze_q)}^2,\]
 or equivalently, $\abs{S(\ze_q)}^2=pq^2$.
 
 We distinguish two cases; first, $T(\ze_p)=0$. By \eqref{TT2} we have $\abs{T(\ze_{pq})}^2=\abs{T(\ze_q)}^2$, so Lemma \ref{mainformula} for $d=p^2q$ gives
 $\abs{S(\ze_N^q)}^2=\abs{S(\ze_N)}^2$, and thence
 \[\abs{S(\ze_N^{q^2})}^2=\abs{S(\ze_N^q)}^2=\abs{S(\ze_N)}^2=pq.\]
 Also, by $\abs{S(\ze_q)}^2=pq^2$ and \eqref{ST2} we get $\abs{S(\ze_{pq})}^2\geq pq^2$, so by Proposition \ref{basic} for $X=1$ we get
 \[p^2q^2=\frac{1}{pq}\sum_{d\mid N}\abs{S(\ze_N^d)}^2\vphi(N/d)\geq \vphi(p^2q^2)+\vphi(p^2q)+\vphi(p^2)+q(\vphi(pq)+\vphi(q))+pq=p^2q^2,\]
 therefore, since the inequality in the middle is actually an equality, we obtain
 \[S(\ze_N^p)=S(\ze_N^{p^2})=S(\ze_p)=0,\]
 and utilizing the same arguments as before (interchanging $T$ by $S$) we obtain
 \[\abs{T(\ze_N^d)}^2=\abs{S(\ze_N^d)}^2\]
 for all integers $d$. This also yields
 \begin{eqnarray*}
 \wtn{T}(1)=\wtn{T}(q)=\wtn{T}(q^2)=1,\\
 \wtn{T}(pq)=\wtn{T}(p^2q)=q,\\
 \wtn{T}(p)=\wtn{T}(p^2)=\wtn{T}(pq^2)=0.
 \end{eqnarray*}
 For every $j\in\ZZ_p$, the difference sets $T_{j,p}-T_{j-1,p}$ are subsets of $1+p\ZZ_N$; furthermore, from the equations above, every element of $1+p\ZZ_N$ occurs exactly once
 as difference $t-t'$, where $t,t'\in T$. Hence, the following equality between multisets holds:
 \[\bigcup_{j\in\ZZ_p}(T_{j,p}-T_{j-1,p})=1+p\ZZ_N,\]
 and taking mask polynomials on both sides we obtain
 \begin{equation}\label{Tjp}
 \sum_{j\in\ZZ_p}T_{j,p}(X)T_{j-1,p}(X^{-1})\equiv X\sum_{k=0}^{pq^2-1}X^{pk}\bmod(X^N-1).
 \end{equation}
 A consequence of $T(\ze_p)=0$ is $\abs{T_{j,p}}=q$ for all $j$; indeed, as
 \[T(\ze_p)=\sum_{j\in\ZZ_p}\ze_p^j\abs{T_{j,p}}=0.\]
 Differentiating the left hand side side of \eqref{Tjp} at $X=1$, we get
 \[\sum_{j\in\ZZ_p}T'_{j,p}(1)T_{j-1,p}(1)-T_{j,p}(1)T'_{j-1,p}(1)=q\sum_{j\in\ZZ_p}T'_{j,p}(1)-T'_{j-1,p}(1)=0,\]
 and differentiating the right hand side of \eqref{Tjp} at $X=1$,
 \[pq^2+p\frac{pq^2(pq^2-1)}{2}=\frac{N}{p}+N\frac{N/p-1}{2}.\]
 Next, we apply Lemma \ref{deriv}; if $N$ is odd, then the above derivative is $\equiv\frac{N}{p}\bmod N$, a contradiction. So, $N$ must be even and $p=2$, because
 this derivative is $\equiv\frac{N}{p}\bmod\frac{N}{2}$. Furthermore, $T(\ze_{q^2})=0$ implies
 \[\sum_{j\in\ZZ_{q^2}}\ze_{q^2}^j\abs{T_{j,q^2}}=0.\]
 This implies that for every $j$,
 \[\abs{T_{j,q^2}}=\abs{T_{j+q,q^2}}=\abs{T_{j+2q,q^2}}=\dotsb=\abs{T_{j+(q^2-q),q^2}},\]
 or, simply put, $\abs{T_{j,q}}=q\abs{T_{j,q^2}}$. Since $\abs{T}=2q$ and $T\neq T_{j,q}$ for any $j$, due to the primitivity of $T$, there must be $j,k\in\ZZ_q$, $j\neq k$, such that
 $T=T_{j,q}\cup T_{k,q}$; moreover, $\abs{T_{j,q}}=\abs{T_{k,q}}=q$. The differences $T-T$ taken $\bmod q$ would then be only $0$ and $\pm(j-k)$, which shows that $q=3$,
 since all possible residues $\bmod q$ appear in $\ZZ_{4q^2}^{\star}\ssq T-T$. Possibly after
 translating $T$, we may assume that $T=T_{0,3}\cup T_{1,3}$. As multisets, we would have the following equality
 \[(T_{0,3}-T_{1,3})\cup(T_{1,3}-T_{0,3})=\ZZ_{36}^{\star}.\]
 But this leads to a contradiction, as the possible differences in the left hand side are $18$, while $\ZZ_{36}^{\star}=12$.
 
 The next case is $T(\ze_p)\neq0$. Applying Lemma \ref{mainformula} for $d=p^2q$ and \eqref{TT2} we obtain
 \[-\frac{1}{p\sqrt{q}}\abs{T(\ze_p)}^2=\frac{1}{\sqrt{q}}\ent{\abs{S(\ze_N^q)}^2-\abs{S(\ze_N)}^2},\]
 hence $\abs{S(\ze_N^q)}^2<\abs{S(\ze_N)}^2$ or equivalently, $\wtn{T}(q)<1$. Therefore, $S(\ze_N^q)=0$, and by Lemma \ref{mainformula} for $d=p^2$ we also get
 \[S(\ze_N^q)=S(\ze_N^{q^2})=0.\]
 Applying Lemma \ref{mainformula} for $d=pq^2$ and $d=q^2$, we obtain the formulae
 \[\frac{1}{q}\ent{-\abs{T(\ze_q)}^2-\abs{T(\ze_N^{q^2})}^2+\abs{T(\ze_N^q)}^2}=\abs{S(\ze_N^p)}^2-\abs{S(\ze_N)}^2\]
 and
 \[\frac{1}{q}\ent{\abs{T(\ze_N^{q^2})}^2-\abs{T(\ze_N^q)}^2}=\frac{1}{p}\ent{\abs{S(\ze_N^{p^2})}^2-\abs{S(\ze_N^p)}^2}.\]
 Adding these equations by parts yields
 \begin{eqnarray*}
 -\frac{1}{q}\abs{T(\ze_q)}^2 &=& \frac{1}{p}\ent{\abs{S(\ze_N^{p^2})}^2-\abs{S(\ze_N^p)}^2}+\ent{\abs{S(\ze_N^p)}^2-\abs{S(\ze_N)}^2}\\
 &=& \frac{1}{p}\ent{\abs{S(\ze_N^{p^2})}^2-\abs{S(\ze_N)}^2}+\bra{1-\frac{1}{p}}\ent{\abs{S(\ze_N^p)}^2-\abs{S(\ze_N)}^2}.
 \end{eqnarray*}
 Since the left hand side is negative, either one of $\abs{S(\ze_N^p)}^2$ and $\abs{S(\ze_N^{p^2})}^2$ must be less than $\abs{S(\ze_N)}^2$; but this means that one of them is zero,
 otherwise, we would either have $\wtn{T}(p)<1$ or $\wtn{T}(p^2)<1$. If $S(\ze_N^p)=0$, then $S(\ze_N^{p^2})=0$ as well by Claim \ref{implications}, since $S(\ze_{pq})\neq0$, 
 so at any rate, $S(\ze_N^{p^2})=0$. Hence,
 \[-\frac{1}{q}\abs{T(\ze_q)}^2 = \bra{1-\frac{1}{p}}\abs{S(\ze_N^p)}^2-\abs{S(\ze_N)}^2.\]
 If $\wtn{T}(p)\geq2$, then $\abs{S(\ze_N^p)}^2\geq2pq$ and the right hand side would be $\geq2(p-1)q-pq=(p-2)q\geq0$, while the left hand side is negative. so, either $\wtn{T}(p)=0$
 or $1$. If $\wtn{T}=1$, then $\abs{T(\ze_q)}^2=q^2$, an absurdity as $pq\mid\abs{T(\ze_N^d)}^2$ for all $d\in\ZZ$. This shows that $S(\ze_N^p)=0$ as well; a symmetric argument also
 yields $T(\ze_N^q)=T(\ze_N^{q^2})=0$. Now consider the difference sets $T_{j,p}-T_{k,p}$, for $j\not\equiv k\bmod p$; all differences are prime to $p$, and since 
 $\wtn{T}(q)=\wtn{T}(q^2)=0$, we must have the following equality of multisets:
 \[\bigcup_{\substack{j,k\in\ZZ_p\\ j\not\equiv k\bmod p}}(T_{j,p}-T_{k,p})=\ZZ_N^{\star}.\]
 Taking mask polynomials, we get
 \[\sum_{\substack{j,k\in\ZZ_p\\ j\not\equiv k\bmod p}}T_{j,p}(X)T_{k,p}(X^{-1})\equiv R_N(X)\bmod(X^N-1),\]
 whence for $X=1$,
 \[\sum_{\substack{j,k\in\ZZ_p\\ j\not\equiv k\bmod p}}\abs{T_{j,p}}\abs{T_{k,p}}=pq(p-1)(q-1).\]
 The left hand side is also equal to
 \[\ent{\sum_{j\in\ZZ_p}\abs{T_{j,p}}}^2-\sum_{j\in\ZZ_p}\abs{T_{j,p}}^2=p^2q^2-\sum_{j\in\ZZ_p}\abs{T_{j,p}}^2.\]
 therefore,
 \[\sum_{j\in\ZZ_p}\abs{T_{j,p}}^2=pq(p+q-1).\]
 On the other hand, using a simliar argument as before, the fact that $T(\ze_N^{q^2})=0$ implies $p\mid\abs{T_{j,p}}$ for all $p$, whence $p^2\mid\sum_{j\in\ZZ_p}\abs{T_{j,p}}^2$;
 we should also have $q^2\mid \sum_{j\in\ZZ_p}\abs{T_{j,p}}^2$, as $T(\ze_N^{p^2})=0$ as well, which is clearly an absurdity as $p^2q^2\nmid pq(p+q-1)$.
 
 Thus, we may assume that
 \[T(\ze_N^p)T(\ze_N^q)T(\ze_{pq})S(\ze_N^p)S(\ze_N^q)S(\ze_{pq})\neq0,\]
 otherwise we would revisit one of the previous cases, possibly by interchanging $S$ by $T$ and $p$ by $q$. We will show that
 \begin{equation}\label{qTp}
 q\wtn{T}(p^2)+\wtn{T}(p^2q)\geq q,
 \end{equation}
 and similarly,
 \begin{equation}\label{pTq}
 p\wtn{T}(q^2)+\wtn{T}(pq^2)\geq p.
 \end{equation}
 If $\wtn{T}(p^2)\geq1$, \eqref{qTp} is trivially satisfied, so assume $\wtn{T}(p^2)=0$, or equivalently, $S(\ze_N^{p^2})=0$. Then,
 \[S(X^{p^2})\equiv Q(X^{p^2})\Phq\bmod(X^N-1),\]
 for some $Q(X)\in\ZZ[X]$ with nonnegative coefficients. Therefore,
 \[S(X^{p^2q})\equiv qQ(X^{p^2q})\bmod(X^N-1),\]
 whence
 \[\abs{S(\ze_q)}^2=q^2\abs{Q(\ze_q)}^2.\]
 The term $\abs{Q(\ze_q)}^2$ is simultaneously an algebraic integer by definition and a rational number due to the above equation, hence an integer. This implies $q^2\mid\abs{S(\ze_q)}^2$,
 hence $q\mid\wtn{T}(p^2q)$; we note that $\wtn{T}(p^2q)$ and $\wtn{T}(p^2)$ cannot be both zero, otherwise $q^2\mid\abs{S}$, a contradiction. Thus, $\wtn{T}(p^2q)\geq q$ in this case,
 proving \eqref{qTp} and \eqref{pTq} at all cases. Now apply Proposition \ref{basic} for $X=1$, along with \eqref{qTp} and \eqref{pTq} we get
 \begin{eqnarray*}
 \abs{T}^2 &=& \sum_{d\mid N}\wtn{T}(d)\vphi(N/d)\\
 &\geq& pq+\vphi(p^2q^2)+\vphi(p^2q)+\vphi(pq^2)+\vphi(pq)+\\
 &+&\wtn{T}(p^2)\vphi(q^2)+\wtn{T}(p^2q)\vphi(q)+\wtn{T}(q^2)\vphi(p^2)+\wtn{T}(pq^2)\vphi(p)\\
 &\geq& pq+pq(p-1)(q-1)+q(p-1)(q-1)+p(p-1)(q-1)+\\
 &+& (p-1)(q-1)+q(q-1)+p(p-1)\\
 &=& p^2q^2+(p-1)(q-1)>p^2q^2,
 \end{eqnarray*}
 which is clearly a contradiction, concluding the proof.
\end{proof}

Summarizing the results of this section, we conclude that the field descent method tackles the cases $p^kq^l$ with both exponents relatively high, while the polynomial method tackles the cases where
one exponent is small. Combining these methods, we see that for most pairs of primes the conjecture is settled; of course, this needs to be properly quantified. In the Appendix, we find the number of
exceptions among all $N$ with $\rad(N)=pq$ and $p,q<10^3$, just to get an idea. This number is significantly lower if we use the polynomial method to prove the $N=p^4q^2$ case.

 \bigskip
 \section{Beyond two prime factors}\label{beyond}
 \bigskip
 
 The main obstruction to apply the field descent method when $\om(N)>2$ is the fact that primitivity of $T\ssq\ZZ_N$ does not imply $(T-T)\cap\ZZ_N^{\star}\neq\vn$, as can be seen from the Remark
 immediately after Lemma \ref{intersection}. However, it needs to be emphasized that so far this method was only applied to the condition $\abs{S(\ze_N)}^2\in\ZZ$, and not
 $\abs{S(\ze_N^d)}^2\in\ZZ$ in general. For $S(\ze_N^d)$, the constant $C$ in Theorem \ref{Fbound} can be as large as $d$, so it might suffice to show that there always exists some $d\mid N$,
 say $d\leq N^{1/5}$, satisfying $S(\ze_N^d)\neq0$. Moreover, Lemma \ref{intersection} was proven for primitive sets, without any other condition. It is possible that this can be extended to
 other cases with conditions such as $\abs{T}\mid N$, or the requirement that the differences $t-t'$ with $t,t'\in T$ are equidistributed in every divisor class $d\ZZ_N^{\star}$. 
 
 Besides the case for square--free $N$ where Conjecture \ref{mainconj} has been confirmed \cite{XPC}, we will show that \ref{mainconj} is also true for another family of orders $N$ that satisfy the so--called
 \emph{self conjugacy} property with respect to a prime factor $p$ (there is no restriction on the number of distinct prime factors for such $N$). This notion was first used by Turyn \cite{Turyn65} to attack
 the circulant Hadamard conjecture stated by Ryser \cite{CircHadamard}.
 
 \begin{defn}
  A prime $p$ is called self--conjugate $\bmod N$ if every ideal $\mf P\ssq\ZZ[\ze_N]$ dividing $p\ZZ[\ze_N]$ is invariant under complex conjugation, i.e. $\overline{\mf P}=\mf P$.
 \end{defn}
 
 In other words, the complex conjugation belongs to the decomposition group of any prime ideal $\mf P\mid p$. A characterization of the decomposition group (cf. Theorem 1.4.3 \cite{SchmidtFDM})
 shows that $\sig\in G_{\mf P}$ if and only if $\sig(\ze_m)=\ze_m^{p^j}$ for some $j\in \ZZ$, where $N=p^am$, $p\nmid m$ (i.e. $m$ is the $p$--free part of $N$). From this follows the result of
 Turyn \cite{Turyn65} (see also Corollary 1.4.5 \cite{SchmidtFDM}), a weaker version of which we state below.
 
 \begin{thm}\label{selfconj}
  Let $A\in\ZZ[\ze_N]$ such that $\abs{A}^2\equiv0\bmod p^{2b}$, where $p$ is self--conjugate $\bmod N$. Then $A\equiv0\bmod p^b\ZZ[\ze_N]$.
 \end{thm}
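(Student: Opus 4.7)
The plan is to work ideal-theoretically in $\ZZ[\ze_N]$, using the explicit factorization from Theorem \ref{splitting} together with the self-conjugacy hypothesis. Write $N = p^a m$ with $p\nmid m$, and let
\[
p\ZZ[\ze_N] = (\mf P_1 \mf P_2 \dotsm \mf P_r)^e, \qquad e = \vphi(p^a),
\]
be the prime factorization. The hypothesis $|A|^2 \equiv 0 \bmod p^{2b}$ translates to the ideal containment $A\overline{A}\,\ZZ[\ze_N] \ssq p^{2b}\ZZ[\ze_N]$; passing to $\mf P_i$-adic valuations, we obtain $v_{\mf P_i}(A) + v_{\mf P_i}(\overline A) \geq 2be$ for every $i$.

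Next I would exploit self-conjugacy. Complex conjugation is an element $\tau \in \Gal(\QQ(\ze_N)/\QQ)$, and for any Galois automorphism $\sig$ one has $v_{\sig\mf P}(\sig X) = v_{\mf P}(X)$, so in particular $v_{\mf P_i}(\overline A) = v_{\tau \mf P_i}(A)$. Self-conjugacy of $p$ means precisely that $\tau \mf P_i = \mf P_i$ for every $i$, hence $v_{\mf P_i}(\overline A) = v_{\mf P_i}(A)$, and therefore the inequality above reduces to $2 v_{\mf P_i}(A) \geq 2be$, i.e.\ $v_{\mf P_i}(A) \geq be$ for every $i$.

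From this, $A \in \mf P_i^{be}$ for each $i$; since the $\mf P_i$ are distinct maximal ideals, they are pairwise coprime and so their powers $\mf P_i^{be}$ are coprime as well, whence
\[
A \in \bigcap_{i=1}^r \mf P_i^{be} = \prod_{i=1}^r \mf P_i^{be} = (\mf P_1 \dotsm \mf P_r)^{be} = \bigl( p\ZZ[\ze_N] \bigr)^b = p^b \ZZ[\ze_N],
\]
which is the desired conclusion. The only nontrivial step is the valuation identity $v_{\mf P}(\overline A) = v_{\tau \mf P}(A)$, but this is just the general fact that a field automorphism carries the $\mf P$-adic valuation to the $\sig\mf P$-adic valuation; once this is combined with the hypothesis $\tau \mf P_i = \mf P_i$, the argument is essentially formal, and the rest is the standard Dedekind-domain manipulation of coprime ideals. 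The main conceptual point — the one that makes self-conjugacy the correct hypothesis — is precisely that it forces the $\mf P$-adic valuations of $A$ and $\overline A$ to coincide, so that the divisibility on $A\overline A$ splits evenly between the two factors.
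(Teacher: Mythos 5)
Your argument is correct and is precisely the standard proof of Turyn's lemma that the paper invokes by citation (to \cite{Turyn65} and Corollary 1.4.5 of \cite{SchmidtFDM}) rather than proving itself: self-conjugacy forces $v_{\mf P}(A)=v_{\mf P}(\overline A)$ at every prime $\mf P$ above $p$, so the valuation bound $2be$ on $A\overline A$ splits evenly and the coprimality of the $\mf P_i$ gives $A\in p^b\ZZ[\ze_N]$. No gaps; the only point worth stating explicitly is that complex conjugation is an involution, so $v_{\mf P}(\overline A)=v_{\tau^{-1}\mf P}(A)=v_{\tau\mf P}(A)$.
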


 This the main result of this section.

 \begin{thm}\label{pN}
 Let $p$ be a prime such that $p\pdiv N$ and $p^j\equiv-1\bmod\frac{N}{p}$ for some integer $j$. Then $\ZZ_N$ does not have any pair of primitive formally dual subsets.
 \end{thm}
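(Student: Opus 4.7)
Suppose for contradiction that $T,S\ssq\ZZ_N$ form a primitive formally dual pair, and write $N=pm$ with $\gcd(p,m)=1$. Since $\abs{T}\cdot\abs{S}=N=pm$ and $p\pdiv N$, exactly one of $\abs{T},\abs{S}$ is divisible by $p$; after interchanging $T$ and $S$ if necessary, assume $p\mid\abs{T}$, say $\abs{T}=pt$ and $\abs{S}=m/t$ with $t\mid m$. Proposition \ref{basic} gives
\[ \abs{S(\ze_N^y)}^2=\frac{\abs{S}^2}{\abs{T}}\wtn{T}(y)=\frac{m^2}{pt^3}\wtn{T}(y)\in\ZZ \]
for every $y\in\ZZ_N$, and since $\gcd(p,m)=1$ this forces $p\mid\wtn{T}(y)$ for all $y$, equivalently
\[ T(X)\,T(X^{-1})\equiv 0\pmod p\quad\text{in }\ZZ[X]/(X^N-1). \]

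The crucial step is to strengthen this to $X^m-1\mid T(X)$ in $\FF_p[X]$ using self-conjugacy. Since $X^N-1=(X^m-1)^p$ in $\FF_p[X]$, factor $X^m-1=\prod_{d\mid m}\Phi_d(X)$ and each $\Phi_d=\prod_i h_{d,i}$ into distinct $\FF_p$-irreducibles of degree $f_d=\ord_d(p)$. The Chinese Remainder Theorem gives $\FF_p[X]/(X^N-1)\cong\prod_{d,i}\FF_p[X]/h_{d,i}(X)^p$; passing to each residue field $\FF_{p^{f_d}}$, in which $X$ maps to a primitive $d$-th root of unity $\alpha$, the congruence above forces $T(\alpha)\,T(\alpha^{-1})=0$. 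The hypothesis $p^j\equiv-1\pmod m$ (hence $\pmod d$, since $d\mid m$) yields $\alpha^{-1}=\alpha^{p^j}$, and since $T(X)$ has $\FF_p$-coefficients the Frobenius $x\mapsto x^{p^j}$ gives $T(\alpha^{-1})=T(\alpha)^{p^j}$. Consequently $T(\alpha)^{1+p^j}=0$ in the field $\FF_{p^{f_d}}$, so $T(\alpha)=0$. Varying $d$ and $\alpha$ we obtain $h_{d,i}\mid T(X)$ for every $d,i$, hence $X^m-1\mid T(X)$ in $\FF_p[X]$. Reducing modulo $X^m-1$ translates this into $\mu_b\equiv 0\pmod p$, where $\mu_b:=\#\{t\in T:t\equiv b\bmod m\}\in\{0,1,\dotsc,p\}$; therefore $\mu_b\in\{0,p\}$ for every $b$, and $T=B+m\ZZ_N$ is a disjoint union of full cosets of the order-$p$ subgroup $m\ZZ_N$ of $\ZZ_N$, where $B=\{b\in\ZZ_m:\mu_b=p\}$.

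The periodicity of $T$ collapses its Fourier support: for any $y\in\ZZ_N$,
\[ T(\ze_N^y)=\sum_{b\in B}\ze_N^{by}\sum_{k=0}^{p-1}\ze_N^{ykm}, \]
and when $p\nmid y$ the inner sum is the complete geometric sum over the primitive $p$-th root of unity $\ze_N^{ym}$, hence zero. Thus $\abs{T(\ze_N^y)}^2=0$ whenever $p\nmid y$, and Proposition \ref{basic} then gives $\wtn{S}(y)=0$ for every such $y$. But this forces every difference of elements of $S$ to be divisible by $p$, so $S$ is contained in a single coset of $p\ZZ_N$, contradicting the primitivity of $S$. The main subtlety is the self-conjugacy step, which converts the weak product equation $T(\alpha)T(\alpha^{-1})=0$ into the strong single-factor vanishing $T(\alpha)=0$ via the Frobenius identification $T(\alpha^{-1})=T(\alpha)^{p^j}$, a feature available precisely because $p^j\equiv-1\pmod m$.
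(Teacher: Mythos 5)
Your proof is correct, and it shares the paper's overall skeleton: both arguments show that $T$ must be a union of full cosets of the order-$p$ subgroup $(N/p)\ZZ_N$, and then contradict the primitivity of $S$ (you via $T(\ze_N^y)=0$ for $p\nmid y$, hence $S-S\ssq p\ZZ_N$; the paper via $\wtn{T}(N/p)=\abs{T}$, hence $\abs{S(\ze_p)}=\abs{S}$, contradicting Proposition \ref{upperboundmask} --- the same obstruction in two guises). Where you genuinely diverge is in how self-conjugacy is deployed. The paper starts from $p^2\mid\abs{T(\ze_N^d)}^2$ (using $p\mid\abs{T}$, $p\nmid\abs{S}$) and invokes Turyn's theorem (Theorem \ref{selfconj}) in $\ZZ[\ze_N]$ to get $T(\ze_N^d)\equiv0\bmod p\ZZ[\ze_N]$, then reduces modulo a prime ideal above $p$. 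You start instead from the dual divisibility $p\mid\wtn{T}(y)$, i.e.\ $T(X)T(X^{-1})\equiv0$ in $\FF_p[X]/(X^N-1)$, and re-derive the self-conjugacy consequence from scratch in each residue field $\FF_{p^{f_d}}$: the Frobenius identity $T(\alpha^{-1})=T(\alpha)^{p^j}$, available exactly because $p^j\equiv-1\bmod d$, converts the product equation into $T(\alpha)^{1+p^j}=0$ and hence $T(\alpha)=0$. This is precisely the mechanism underlying Turyn's theorem, so your argument is a self-contained, purely characteristic-$p$ unpacking of the paper's citation; it buys independence from Theorem \ref{selfconj} at the cost of carrying out the factorization of $X^m-1$ over $\FF_p$ by hand. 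Both proofs use the hypothesis $p\pdiv N$ in the same two places: to ensure exactly one of $\abs{T},\abs{S}$ is divisible by $p$, and to bound the fiber multiplicities $\mu_b$ by $p$ so that $p\mid\mu_b$ forces $\mu_b\in\set{0,p}$.
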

 \begin{proof}
 The hypothesis clearly shows that $p$ is self--conjugate $\bmod N$.
  Let $T$, $S$ be a pair of primitive formally dual subsets of $\ZZ_N$.
 Without loss of generality, we assume $p\mid \abs{T}$, so that
 $p\nmid \abs{S}$. For every $d\mid N$ we have
 \[\abs{T(\ze_N^d)}^2=\bs 1_S\ast\bs1_{-S}(d)\frac{\abs{T}^2}{\abs{S}},\]
 hence $p^2\mid \abs{T(\ze_N^d)}^2$. We consider the mask polynomial $T(X)\bmod (p,X^{N/p}-1)$, and let $\mathfrak{P}$ be any prime ideal in $\QQ(\ze_N^p)$ that is above $p$. The degree
 of the residue field extension
 \[f=[\ZZ[\ze_N^p]/\mf{P}:\ZZ/p\ZZ]\]
 is also equal to the multiplicative order of $p\bmod \frac{N}{p}$. In particular, the ring epimorphism
 \[\ZZ[\ze_N^p]\twoheadrightarrow \ka(\mf{P}):=\ZZ[\ze_N^p]/\mf{P}\]
 sends all $\frac{N}{p}$th roots of unity of $\CC$ to the $\frac{N}{p}$th roots of unity of $\ka(\mf{P})$. Let $\ol{T}(X)$ be the image of $T(X)$ under the projection
 \[\ZZ[X]\twoheadrightarrow \FF_p[X].\]
 Since $p^2\mid \abs{T(\ze_N^d)}^2$ for every $d\mid N$, we must have
 \[T(\ze_N^d)\equiv 0\bmod\mf{P},\]
 for every $d\mid N$ by Theorem \ref{selfconj}; restricting to $p\mid d$, we observe that $\ol{T}(X)$ accepts as roots all $\frac{N}{p}$th roots of unity of $\ka(\mf{P})\cong\FF_{p^f}$, which yields
 \[\ol{T}(X)\equiv 0\bmod(X^{N/p}-1).\]
 Lifting up to $\ZZ[X]$, we obtain
 \[T(X)\equiv pQ(X)\bmod(X^{N/p}-1),\]
 or equivalently,
 \begin{equation}\label{pT}
 T(X^p)\equiv pQ(X^p)\bmod (X^N-1),
 \end{equation}
 where we can take $Q(X)\in\ZZ_{\geq0}[X]$.
 But $T(X^p)\bmod (X^N-1)$ is also the mask polynomial of the multi-set $p\cdot T$; the multiplicities that appear in this multi-set are at most $p$, since $T$ is a proper set. On the other
 hand, \eqref{pT} shows that all multiplicities are at least $p$. This can only occur when $t\in T$ implies $t+jN/p\in T$ for all $j$, as these elements exactly have the same image under
 multiplication by $p$. Therefore, $T$ is a union of $p$-cycles, in particular,
 \[T(X)\equiv \Php R(X)\bmod (X^N-1),\]
 for some $R(X)\in\ZZ_{\geq0}[X]$. Then, for every $t\in T$, $t-N/p\in T$, which shows that
 \[\bs1_{T}\ast\bs1_{-T}(\tfrac{N}{p})=\abs{T},\]
 hence by \eqref{formdualalt}, $\abs{S(\ze_p)}=\abs{S}$, contradicting Proposition \ref{upperboundmask}.
 \end{proof}
 
 
 
 


 \bigskip
 \appendix
 
 \section{Products of two powers of small primes}
 
 We will focus on $N=p^kq^l$, where $p,q<10^3$. As mentioned at the end of Section \ref{twoprimes}, the field descent method tackles the cases where $k+l$ is large, and the polynomial method tackles those
 where $k+l$ is small, roughly speaking. Most of the times there is no gap, and when there is, it usually consists of a single exception. This search for exceptions is assisted by simple computer programs
 on wxMaxima\footnote{Available at \url{https://sites.google.com/site/romanosdiogenesmalikiosis/computational-data}}. In particular, for every pair $(p,q)$ with $p<q<10^3$ these programs compute
 $\nu_p(q^{p-1}-1)$ and $\nu_q(p^{q-1}-1)$ when $p>2$; when $p=2$ they compute $\nu_2(\frac{1}{2}\ord_4(q)(q^2-1))$, as well the number of possible exceptions from each pair of the form $(2,q)$.
 
 Before proceeding, we will need two useful
 propositions for small primes, as well as the notions of a \emph{Wieferich prime} and a \emph{Wieferich pair} \cite{Wieferich}:
 
 \begin{defn}\label{Wieferich}
  A prime $p$ is called Wieferich, if $p^2\mid 2^{p-1}-1$. A pair of primes $(p,q)$ is called a Wieferich pair, if $p^2\mid q^{p-1}-1$ and $q^2\mid p^{q-1}-1$.
 \end{defn}
 
 There are only two known Wieferich primes, namely 1193 and 3511, and only 7 known Wieferich pairs (sequences A124121 and A124122 from OEIS\footnote{\url{ http://oeis.org/A124121} and \url{ http://oeis.org/A124122}}):
 \[
 (2,1093), (3,1006003), (5,1645333507), (5,188748146801), (83,4871), (911,318917), (2903,18787).
 \]
 
 \begin{prop}\label{p3q3}
  Let $N=p^3q^3$. If $\ZZ_N$ has a pair of primitive formally dual subsets, then $p$ and $q$ are simultaneously twin primes and a Wieferich pair.
 \end{prop}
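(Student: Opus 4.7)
The plan is to follow the pattern of Proposition \ref{p2q2} but lean heavily on the field descent bound (Theorem \ref{Fbound}). Suppose a primitive formally dual pair $T, S \ssq \ZZ_N$ with $|T| \leq |S|$ exists. Proposition \ref{Som} forces $|S(\ze_N)|^2 = |S|^3/N \in \ZZ$, which together with $|S| \mid N$ requires $pq \mid |S|$; applying the analogous relation to $T$ and invoking Lemma \ref{intersection} similarly forces $pq \mid |T|$. Combined with $|T|\cdot|S| = p^3q^3$, $|T| \leq \sqrt{N}$, and the elimination of $|T| = pq$ via Lemma \ref{mainestimate}, this pins down (assuming WLOG $p < q$)
\begin{equation*}
|T| = p^2q, \quad |S| = pq^2, \quad |S(\ze_N)|^2 = q^3, \quad |T(\ze_N)|^2 = Kp^3,
\end{equation*}
where $K = \wtn{S}(1) \in \{1,2,3\}$.

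The twin-prime part should then follow by substituting $|T| = p^2q$ and $\varphi(N) = p^2q^2(p-1)(q-1)$ into $\varphi(N) \leq |T|(|T|-1)$ from Lemma \ref{mainestimate}. After simplification this yields $(p-1)(q-1) < p^2$, hence $q \leq p+2$. The only option with $q = p+1$ is the pair $(2,3)$, which I would dispatch by direct application of Theorem \ref{Fbound}: formula \eqref{Fmqeven} gives $F(216, 3) = 24$, so the required bound $27 = q^3 \leq 24^2/(4\varphi(24)) = 18$ fails, ruling this case out. Thus $p \geq 3$ and $q = p+2$.

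For the Wieferich conditions I would apply Theorem \ref{Fbound} with $C=1$ to each of $S(\ze_N)$ and $T(\ze_N)$. For $S(\ze_N)$, Proposition \ref{Cpq} formula \eqref{Fmq} gives $F(N, q^3) = F(N, q) = q \cdot g$ with $g = \gcd(p^3, q^{p-1}-1) \in \{1, p, p^2, p^3\}$, and the bound $q^3 \leq F^2/(4\varphi(F))$ simplifies to
\begin{equation*}
4q(q-1)\varphi(g) \leq g^2.
\end{equation*}
A routine case-check over the four candidates, using $q = p+2 \geq 5$, rules out $g \in \{1, p\}$, forcing $g \geq p^2$, which is precisely $p^2 \mid q^{p-1}-1$. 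The analogous argument applied to $T(\ze_N)$ should yield $h = \gcd(q^3, p^{q-1}-1) \geq q^2$ and hence $q^2 \mid p^{q-1}-1$, completing the Wieferich-pair conclusion.

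The anticipated main obstacle is the unknown factor $K \in \{1,2,3\}$ in $|T(\ze_N)|^2$: for $K \in \{2,3\}$ the prime $2$ (respectively $3$) enters $\rad(Kp^3)$ and enlarges the $q$-part of $F(N, Kp^3)$ in Definition \ref{Fmn} by a Wieferich-type term $\nu_q(2^{q-1}-1)$ or $\nu_q(3^{q-1}-1)$. The twin-prime identity $q = p+2$ ensures $q \nmid p-1$, killing the $\nu_q(\ord_{m_{q'}}(q'))$ contributions, so essentially only Wieferich-style exponents survive; the stronger left-hand side $Kp^3 \geq p^3$ combined with only a modest inflation of $F$ should still force $h \geq q^2$. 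The delicate subcase in which $q$ itself is a Wieferich prime base $2$ or base $3$ (of which only $1093$ and $3511$ are known) would need to be isolated and handled by a direct argument, such as Proposition \ref{basic} evaluated at $X = 1$.
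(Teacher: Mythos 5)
Your structural setup matches the paper's proof and is sound: the determination $|T|=p^2q$, $|S|=pq^2$, the twin--prime deduction $(p-1)(q-1)<p^2$ from Lemma \ref{mainestimate}, the dispatch of $(2,3)$ via $F(216,3)=24$, and the $S$-side field-descent inequality $4q(q-1)\varphi(g)\leq g^2$ are all exactly what the paper does. In fact your case-check gives more than you claim: with $q=p+2$ the choice $g=p^2$ also violates $4q(q-1)\varphi(g)\leq g^2$, so one obtains the stronger conclusion $p^3\mid q^{p-1}-1$, and the inequality moreover fails outright for the pairs $(3,5)$ and $(5,7)$, which yields $p\geq7$ -- a fact the paper uses and that you will need below.

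The genuine gap is precisely the obstacle you flag at the end and do not resolve: the factor $K=\wtn{S}(1)$ in $|T(\ze_N)|^2=Kp^3$. Your plan to absorb $K\in\{2,3\}$ into the field-descent bound does not close, because for $n=2p^3$ (resp.\ $3p^3$) Definition \ref{Fmn} introduces the term $\nu_q(2^{q-1}-1)$ (resp.\ $\nu_q(3^{q-1}-1)$) into $b(q,N,n)$; this is $\geq1$ by Fermat's little theorem and $\geq2$ exactly when $q$ is a Wieferich prime to base $2$ (resp.\ $3$), in which subcase the $q$-part of $F(N,Kp^3)$ reaches $q^2$ carrying no information whatsoever about $\nu_q(p^{q-1}-1)$, so the desired conclusion $q^2\mid p^{q-1}-1$ does not follow. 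Since the proposition is stated for all primes, you cannot discharge this subcase by appealing to the short list of known Wieferich primes. The missing step, which the paper supplies, is that $K=1$ outright: once $q=p+2$ and $p\geq7$ are in hand, Lemma \ref{mainestimate} applied to $S$ gives $\wtn{S}(1)<|S|^2/\varphi(N)=q^2/((p-1)(q-1))=1+(4p+5)/(p^2-1)<2$, forcing $\wtn{S}(1)=1$ and $|T(\ze_N)|^2=p^3$. With that, the $T$-side is the exact mirror of the $S$-side and yields $q^3\mid p^{q-1}-1$, with no case analysis on $K$ required.
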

 
 \begin{proof}
  Let $T,S\ssq\ZZ_N$ be such a pair with $\abs{T}<\abs{S}$. Since $\abs{S(\ze_N)}^2\in\ZZ$, Proposition \ref{Som} implies that $pq\mid\abs{S}$. Furthermore, since $\wtn{S}(1)$ cannot be divisible by
  the cube of any integer $>1$ by Proposition \ref{Som}, we must also have $pq\mid\abs{T}$. Without loss of generality, we consider $p<q$, so by Proposition \ref{STN}
  the only possibility for $\abs{T}$ and $\abs{S}$ is
  \[\abs{T}=p^2q,\	\abs{S}=pq^2.\]
  If $N$ is even, then $p=2$, hence by Corollary \ref{firstestimate} we get $32q^2=2\abs{T}^2>N=8q^3$, hence $q=3$, and $\abs{S(\ze_N)}^2=27$ by Proposition \ref{Som}. Applying 
  Theorem \ref{Fbound} and Propositions \ref{Fproperties} and \ref{Cpq}, we
  obtain
  \[27\leq\frac{F(216,27)^2}{4\vphi(F(216,27))}=18,\]
  a contradiction by Proposition \ref{Cpq}.
  
  So, $N$ must be odd. By Lemma \ref{mainestimate}, we get
  \[pq\sqrt{(p-1)(q-1)}<p^2q,\]
  or equivalently, $(p-1)(q-1)<p^2$, which cannot hold true unless $q=p+2$, i.e. $p$ and $q$ are twin primes. Since $\abs{S}=pq^2$, we will have $\abs{S(\ze_N)}^2=q^3$, so applying again Theorem \ref{Fbound}
  and Propositions \ref{Fproperties} and \ref{Cpq} we get
  \[q^3\leq\frac{F(p^3q^3,q^3)^2}{4\vphi(F(p^3q^3,q^3))}=\frac{F(p^3q^3,q)}{4(1-\frac{1}{p})(1-\frac{1}{q})}=\frac{q\gcd(p^3,q^{p-1}-1)}{4(1-\frac{1}{p})(1-\frac{1}{q})},\]
  or equivalently, $4(p-1)(q-1)q\leq p\gcd(p^3,q^{p-1}-1)$. This inequality can only hold when $p^3\mid q^{p-1}-1$. We note that this cannot happen when $(p,q)=(3,5)$ or $(5,7)$, so we may assume that $p\geq7$.
  Next, we examine
  \[\abs{T(\ze_N)}^2=\frac{\abs{T}^3}{N}\wtn{S}(1)=p^3\wtn{S}(1).\]
  By Lemma \ref{mainestimate} we get
  \[\wtn{S}(1)<\frac{\abs{S}^2}{\vphi(N)}=\frac{q^2}{(p-1)(q-1)}=1+\frac{4p+5}{p^2-1}\leq 1+\frac{p(p-2)}{(p-1)(p+1)}<2,\]
  since $q=p+2$, so $\wtn{S}(1)=1$ and $\abs{T(\ze_N)}^2=p^3$. Applying the field descent method, as was done with $\abs{S(\ze_N)}^2$, we also get $q^3\mid p^{q-1}-1$, thus $p$ and $q$ form a Wieferich pair.
 \end{proof}
 
 We remark that $p$ and $q$ satisfy a stronger condition than the one given in Definition \ref{Wieferich}; at any rate, these conditions never hold when $p,q<10^3$. It is possible, however,
 that an application of the polynomial method (that was left out from the proof) would eventually show that no such $\ZZ_N$ has a primitive formal dual pair.

 \begin{prop}\label{pmq3}
  Let $N=p^mq^3$, with $m\geq5$ and $p,q<10^3$. Then $\ZZ_N$ has no primitive formally dual subsets.
 \end{prop}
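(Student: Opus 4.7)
The plan is to adapt the strategy of Theorems \ref{finexc} and \ref{p4q3}, combining the field descent method with the extra divisibility constraints available when $m \geq 5$. Let $T, S \ssq \ZZ_N$ form a primitive formally dual pair with $\abs{T} \leq \abs{S}$. By Proposition \ref{Som}, $\wtn{T}(1) = 1$ and $n := \abs{S(\ze_N)}^2 = \abs{S}^3/N$ is a positive integer. Writing $\abs{S} = p^a q^b$, integrality forces $a \geq \lceil m/3 \rceil$ and $b \geq 1$, while $\abs{S} \geq \sqrt{N}$ gives $2a + 2b \geq m + 3$. Together with $b \leq 3$, these constraints leave only finitely many admissible pairs $(a, b)$ for each $m$, to be enumerated.

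For each admissible $(a, b)$ we have $n = p^{3a-m} q^{3b-3}$, so $\mathscr{D}(n) \ssq \set{p, q}$. Applying Theorem \ref{Fbound} with $C = 1$ gives
\[n \leq \frac{F(N, n)^2}{4\vphi(F(N, n))} = \frac{F(N, n)}{4(1 - 1/p)(1 - 1/q)},\]
since $\rad(F(N,n)) = \rad(N) = pq$ by Proposition \ref{Fproperties}(2). Invoking Proposition \ref{Cpq}, one has $F(N, n) \leq p^{\alpha} q^{\beta}$ with $\alpha = \min(m, \nu_p(q^{p-1}-1))$ for $p$ odd (and $\alpha = \min(m, \nu_2(q^2-1))$ when $p = 2$) and $\beta = \min(3, \nu_q(p^{q-1}-1))$. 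Combined with the lower bound $n \geq \sqrt{N} \geq p^{5/2} q^{3/2}$, this yields the key inequality
\[p^{m/2 - \alpha} q^{3/2 - \beta} \leq \frac{1}{4(1 - 1/p)(1 - 1/q)} < 1,\]
which bounds $m$ from above in terms of $\alpha, \beta, p, q$.

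For $p, q < 10^3$, the exponents $\alpha$ and $\beta$ are computed explicitly by the programs announced in the appendix; since no Wieferich pair occurs in this range, they are uniformly small (typically $1$, occasionally $2$). For all but a short finite list of pairs, the inequality above already forces $m \leq 4$, contradicting $m \geq 5$. The main obstacle is therefore the residual list of pairs $(p, q)$ where $\alpha$ or $\beta$ happens to be large enough to leave $m = 5$ (and only $m = 5$) borderline consistent with the field descent bound alone. For each such exceptional pair, I would supplement the argument by applying the analogous upper bound to $\abs{T(\ze_N)}^2 = \abs{T}^3 \wtn{S}(1)/N$---using that $\wtn{S}(1) \leq 3$ by Proposition \ref{Som} and further constrained by Lemma \ref{mainestimate}---and if any configuration still survives both inequalities, invoke the polynomial method of Section \ref{polynomial} via Theorems \ref{pqcycles} and \ref{Rderivatives} to rule it out, following the template of the proofs of Propositions \ref{p2q2} and \ref{p4q3}.
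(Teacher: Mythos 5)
Your overall strategy --- bound $n=\abs{S(\ze_N)}^2=\abs{S}^3/N\geq\sqrt{N}$ from above via Theorem \ref{Fbound} and Proposition \ref{Cpq} and check the resulting inequality computationally for $p,q<10^3$ --- is the paper's, but you work with the coarse bound $F(N,n)\leq F(N,pq)\leq p^{\alpha}q^{\beta}$, and this misses the one structural fact that makes the argument close. Since $\abs{T(\ze_N)}^2=\wtn{S}(1)\abs{T}^3/N\in\ZZ$ and $\wtn{S}(1)\leq 3<q^3$, Proposition \ref{Som} forces $q\mid\abs{T}$ as well as $q\mid\abs{S}$; as $q^3\pdiv N$, exactly one of $\abs{S}$, $\abs{T}$ is divisible by $q$ only to the first power. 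If it is $\abs{S}$, then $n=\abs{S}^3/N$ is a \emph{pure power of $p$}, so one may use $F(N,p)\leq pq^{\beta}$ (or the analogous small bounds in the even cases) instead of $p^{\alpha}q^{\beta}$, eliminating the factor $p^{\alpha-1}$ that is the entire source of your ``residual list'' --- for $p=2$ that factor can be as large as $2^{7}$ for $q<10^3$. Without this refinement your key inequality is substantially weaker, and the claim that it fails only for a short list of pairs, and only at $m=5$, is not justified; note in particular that since $\alpha=\min(m,A)$ with $A=\nu_p(q^{p-1}-1)$ (or its analogue for $p=2$), the quantity $p^{m/2-\alpha}$ \emph{decreases} in $m$ for $m\leq A$, so the inequality does not simply ``bound $m$ from above'': the hardest values of $m$ are those near $A$, which for $p=2$ can be as large as $8$.

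The second gap is the case $q\pdiv\abs{T}$, where the pure power of $p$ is $\abs{T}^3/N$ rather than $n$. There the available lower bound is only $\abs{T}^3/N>\vphi(N)^{3/2}/N$, and one must first show $\wtn{S}(1)=1$ (which fails a priori for small $p,q$) before $\abs{T(\ze_N)}^2$ is itself a prime power to which the field descent applies; this case occupies the bulk of the paper's proof, with separate treatments of $p$ or $q$ equal to $2$ or $3$ and of the pair $(5,7)$. Your proposed fallbacks (the bound on $\abs{T(\ze_N)}^2$, then the polynomial method ``following the template'' of Propositions \ref{p2q2} and \ref{p4q3}) point in the right direction but are left entirely unexecuted, so as written the argument does not dispose of, e.g., $N=2^mq^3$ for primes $q$ with $\nu_2(q^2-1)$ large. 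Finally, your formula $\beta=\min(3,\nu_q(p^{q-1}-1))$ is not the correct specialization of Proposition \ref{Cpq} when $q=2$ (there one must take $\beta=3$, since $8\mid p^2-1$ always); the conclusion for $q=2$ survives, but the bound as stated is invalid in that case.
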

 
 \begin{proof}
  Let $T,S\ssq\ZZ_N$ be such a pair of subsets with $\abs{S}>\abs{T}$. As before, Proposition \ref{Som} yields $q\mid\abs{S}$ and $q\mid\abs{T}$. We distinguish two cases.
  
  \noindent
  $\boxed{q\pdiv\abs{S}}$ As $\abs{S(\ze_N)}^2=\frac{\abs{S}^3}{N}$, this must be equal to a power of $p$; furthermore, $\abs{S(\ze_N)}^2>\sqrt{N}\geq p^{5/2}q^{3/2}$. Applying Theorem \ref{Fbound} and
  Proposition \ref{Fproperties}, we get
  \begin{equation}\label{p5q3}
  p^{5/2}q^{3/2}<\frac{F(N,\abs{S(\ze_N)}^2)^2}{4\vphi(F(N,\abs{S(\ze_N)}^2))}=\frac{F(N,p)}{4(1-\frac{1}{p})(1-\frac{1}{q})}.
  \end{equation}
  If $p=2$, then $F(N,2)\leq4\gcd(q^3,2^{q-1}-1)=4q$, by Proposition \ref{Cpq} and the fact that no prime $q<10^3$ is Wieferich. Therefore, \eqref{p5q3} gives $2\sqrt{2}(q-1)<\sqrt{q}$, which never holds.
  If $q=2$, then $F(N,p)\leq p\gcd(8,p^2-1)=8p$, by Proposition \ref{Cpq}, since $p$ is odd. In this case, \eqref{p5q3} gives $\sqrt{p}(p-1)<\sqrt{2}$, which again never holds. So, we assume that $N$ is odd.
  Then, \eqref{p5q3} becomes
  \[p^{5/2}q^{3/2}<\frac{p\gcd(q^3,p^{q-1}-1)}{4(1-\frac{1}{p})(1-\frac{1}{q})}<\frac{1}{2}pq^a,\]
  where $a\leq3$. This in turn yields
  \begin{equation}\label{p5q3a}
  2p^{3/2}<q^{a-3/2},
  \end{equation}
  and as the right hand side is $<q^{3/2}$, we must certainly have $p<q$. A simple search for primes $p<q<10^3$ reveals that we never have $q^3\mid p^{q-1}-1$, so $a\leq2$. If $a=1$, \eqref{p5q3a} cannot
  hold; the only pairs of primes $(p,q)$ with $p<q<10^3$ and $q^2\mid p^{q-1}-1$ are
  \begin{equation}\label{qa2}
  (3,11), (11,71), (13,863), (19,137), (71,331), (127,907).
  \end{equation}
  However, none of them satisfies $2p\sqrt{p}<\sqrt{q}$, therefore we cannot have $q\pdiv\abs{S}$.
  
  \noindent
  $\boxed{q\pdiv\abs{T}}$ Then $\frac{\abs{T}^3}{N}$ is a power of $p$. For $p,q\geq7$, we have
  \[\wtn{S}(1)<\frac{\abs{S}^2}{\vphi(N)}<\frac{N^2}{\vphi(N)^2}=\bra{\frac{pq}{(p-1)(q-1)}}^2\leq \bra{\frac{77}{60}}^2<2,\]
  and if $p,q\geq5$ with $p,q\neq 7$, then 
  \[\wtn{S}(1)<\bra{\frac{pq}{(p-1)(q-1)}}^2\leq \bra{\frac{55}{40}}^2<2,\]
  so in both cases,
  \[\abs{T(\ze_N)}^2=\frac{\abs{T}^3}{N}.\]
  In these cases, we get
  \[\frac{\abs{T}^3}{N}\leq\frac{F(N,\abs{T(\ze_N)}^2)^2}{4\vphi(F(N,\abs{T(\ze_N)}^2))}=\frac{F(N,p)}{4(1-\frac{1}{p})(1-\frac{1}{q})}=\frac{p\gcd(q^3,p^{q-1}-1)}{4(1-\frac{1}{p})(1-\frac{1}{q})}.\]
  By Lemma \ref{mainestimate}, we get
  \[\frac{\abs{T}^3}{N}>\frac{\vphi(N)^{3/2}}{N}=\sqrt{N}\ent{(1-\tfrac{1}{p})(1-\tfrac{1}{q})}^{\frac{3}{2}}.\]
  Combining the above, we get
  \[p^{5/2}q^{3/2}\leq \sqrt{N}<pq^a,\]
  since $4[(1-\frac{1}{p})(1-\frac{1}{q})]^{5/2}>1$ when $p,q\geq 5$, where $q^a=\gcd(q^3,p^{q-1},1)$. The above is equivalent to $p\sqrt{p}<q^{a-3/2}$, and as the latter is $\leq q\sqrt{q}$, we must have
  $p<q$. Again, with a simple computer search we find that we cannot have $a=3$ for such primes with $p<q<10^3$; moreover, if $a=1$, $p\sqrt{p}<q^{a-3/2}$ cannot hold, so we must have $a=2$ and
  $p^3<q$. We are led again to the pairs in \eqref{qa2}, but none satisfies $p^3<q$. The only case that is not tackled for $p,q\geq 5$ is when we have the pair $(5,7)$, or equivalently, $35\mid N$. In that
  case, by Theorem \ref{Fbound} and Propositions \ref{Fproperties} and \ref{Cpq} we get
  \[5^{5/2}7^{3/2}\leq\sqrt{N}<\abs{S(\ze_N)}^2\leq\frac{F(N,\abs{S(\ze_N)}^2)^2}{4\vphi(F(N,\abs{S(\ze_N)}^2))}\leq\frac{F(N,35)}{4\cdot\frac{24}{35}}=\frac{5^37^2}{96},\]
  since $5^2\pdiv 7^4-1$ and $7\pdiv5^6-1$. But this leads to $96<\sqrt{35}$ a contradiction.
  
  Next, assume that $N$ is odd and $p=3$. Then $\sqrt{N}\geq3^{5/2}q^{3/2}$ and
  \[\sqrt{N}<\abs{S(\ze_N)}^2\leq\frac{F(N,\abs{S(\ze_N)}^2)^2}{4\vphi(F(N,\abs{S(\ze_N)}^2))}\leq\frac{F(N,3q)}{\frac{8}{3}(1-\frac{1}{q})},\]
  yielding
  \begin{equation}\label{Np3}
  8\cdot3^{3/2}\sqrt{q}(q-1)<\gcd(3^mq^3,(3^{q-1}-1)(q^2-1)),
  \end{equation}
  by Theorem \ref{Fbound} and Propositions \ref{Fproperties} and \ref{Cpq}. If $q\neq11$ and $q<10^3$, then the right hand side is $q\gcd(3^m,q^2-1)\leq3^{\nu_3(q^2-1)}$. The left hand side is
  $>3^3$, as $8>3^{3/2}$ and $q-1>\sqrt{q}$ for all odd primes $q$; hence, $\nu_3(q^2-1)\geq4$, therefore $q\geq163$. At any rate, $\nu_3(q^2-1)\leq5$ for $q<10^3$, yielding
  \[\frac{q-1}{\sqrt{q}}<\frac{3^{7/2}}{8},\]
  which is a contradiction, as $6>\frac{3^{7/2}}{8}$ and $\frac{q-1}{\sqrt{q}}>12$ for $q\geq 163$. If $q=11$, \eqref{Np3} becomes
  \[8\cdot3^{3/2}\frac{10}{\sqrt{11}}<3\cdot11^2,\]
  which holds. Applying Theorem \ref{Fbound} and Propositions \ref{Fproperties} and \ref{Cpq} on $\abs{S(\ze_N)}^2>\sqrt{N}$ gives
  \[3^{5/2}11^{3/2}\leq\sqrt{N}<\frac{F(N,33)^2}{4\vphi(F(N,33))}=\frac{3\cdot11^2}{\frac{80}{33}}\]
  or $80\sqrt{3}<11\sqrt{11}$, a contradiction.
  
  If $N$ is odd and $q=3$, then applying Theorem \ref{Fbound} and Propositions \ref{Fproperties} and \ref{Cpq} for $\abs{S(\ze_N)}^2>\sqrt{N}\geq 3^{3/2}p^{5/2}$,
  \[3^{3/2}p^{5/2}<\frac{F(N,3p)^2}{4\vphi(F(N,3p))}\]
  holds, or equivalently,
  \[8\sqrt{3}p^{3/2}(p-1)<\gcd(p^m3^3,(p^2-1)(3^{p-1}-1)).\]
  If $p\neq11$ and $p<10^3$, then the right hand side is $p\gcd(27,p^2-1)\leq27p$, so we must have
  \[8\sqrt{p}(p-1)<9\sqrt{3}<16,\]
  whence $\sqrt{p}(p-1)<2$, a contradiction, as $p\geq3$ does not satisfy this inequality. If $p=11$, then we get $8\cdot11^{3/2}\sqrt{3}\cdot 10<3\cdot 11^2$ or $80<\sqrt{33}$, a contradiction.
  
  Lastly, suppose that $N$ is even. Assume first that $p=2$, so that $\abs{S(\ze_N)}^2>\sqrt{N}=2^{m/2}q^{3/2}$. By Theorem \ref{Fbound} and Propositions \ref{Fproperties} and \ref{Cpq} we get
  \[2^{m/2}q^{3/2}<\frac{F(N,\abs{S(\ze_N)}^2)}{4\vphi(F(N,\abs{S(\ze_N)}^2))}\leq\frac{F(N,2q)}{2(1-\frac{1}{q})},\]
  yielding
  \[2^{\frac{m}{2}+1}\sqrt{q}(q-1)<\gcd(2^mq^3,(2^{q-1}-1)(\tfrac{1}{2}\ord_4(q)(q^2-1)))=q\gcd(2^m,\tfrac{1}{2}\ord_4(q)(q^2-1)),\]
  as no prime $q<10^3$ is Wieferich. We put $2^a=\gcd(2^m,\tfrac{1}{2}\ord_4(q)(q^2-1))$; for $q<10^3$, $a\leq8$ holds. Furthermore, the above inequality yields
  \begin{equation}\label{Np2}
  q-1<2^{a-\frac{m}{2}-1}\sqrt{q}.
  \end{equation}
  By definition, $a\leq m$, so the right hand side is $\leq 2^{\frac{a}{2}-1}\sqrt{q}\leq 8\sqrt{q}$, as 
  \[\nu_2(\tfrac{1}{2}\ord_4(q)(q^2-1))\leq8\]
  when $q<10^3$. The inequality $q-1<8\sqrt{q}$ further yields $q\leq67$; for these primes,
  \[\nu_2(\tfrac{1}{2}\ord_4(q)(q^2-1))\leq6,\]
  which in turn implies $q-1<4\sqrt{q}$, therefore $q\leq17$. Repeating this argument once again, we deduce
  \[\nu_2(\tfrac{1}{2}\ord_4(q)(q^2-1))\leq4.\]
  Since $m\geq5$, \eqref{Np2} gives
  \[q-1<\sqrt{2q},\]
  which can only hold for $q=3$. However, $\nu_2(\tfrac{1}{2}\ord_4(3)(3^2-1))=\nu_2(8)=3$, so substituting at \eqref{Np2} we obtain $q-1<\sqrt{\frac{q}{2}}$, a contradiction.
  The last case is $N$ even and $q=2$. As before, we apply Theorem \ref{Fbound} and Propositions \ref{Fproperties} and \ref{Cpq} on $\abs{S(\ze_N)}^2$, getting
  \[p^{m/2}2^{3/2}=\sqrt{N}<\abs{S(\ze_N)}^2\leq\frac{F(N,\abs{S(\ze_N)}^2)^2}{4\vphi(F(N,\abs{S(\ze_N)}^2))}\leq\frac{F(N,2p)}{2(1-\frac{1}{p})},\]
  or equivalently,
  \[p^{\frac{m}{2}-1}2^{5/2}(p-1)<\gcd(2^3p^m,\tfrac{1}{2}\ord_4(p)(p^2-1)(2^{p-1}-1))=p\gcd(8,\tfrac{1}{2}\ord_4(p)(p^2-1))\leq8p,\]
  as no prime $p<10^3$ is Wieferich. The above gives
  \[p^{\frac{m}{2}-2}(p-1)<\sqrt{2},\]
  a contradiction, wince the left hand side is $\geq \sqrt{p}(p-1)>2$, concluding the proof.
 \end{proof}

 \begin{prop}\label{p4q4}
  Let $N=p^mq^n$, with $m,n\geq4$ and $p,q<10^3$. Then $\ZZ_N$ has no primitive formally dual subsets.
 \end{prop}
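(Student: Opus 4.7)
The plan follows the template of Propositions \ref{pmq3} and \ref{p4q4}. Suppose, for contradiction, that primitive formally dual subsets $T, S\ssq\ZZ_N$ exist with $\abs{T}\leq\abs{S}$. Then $\abs{S}\geq\sqrt{N}\geq p^2q^2$ by Lemma \ref{mainestimate}, and by Proposition \ref{Som},
\[\abs{S(\ze_N)}^2=\frac{\abs{S}^3}{N}\geq\sqrt{N}\geq p^2q^2\]
is a positive integer whose prime divisors lie in $\set{p,q}$. In particular, since $\rad(F(N,\cdot))=\rad(N)=pq$ by Proposition \ref{Fproperties}(2), both $p$ and $q$ divide $F(N,\abs{S(\ze_N)}^2)$.

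Since $S(\ze_N)=\sum_{s\in S}\ze_N^s$ has all coefficients bounded by $1$, Theorem \ref{Fbound} applied with $C=1$, together with Propositions \ref{Fproperties}(3) and \ref{Cpq}, yields
\[p^2q^2\leq\abs{S(\ze_N)}^2\leq\frac{F(N,\abs{S(\ze_N)}^2)}{4(1-1/p)(1-1/q)}\leq\frac{p^aq^b}{4(1-1/p)(1-1/q)},\]
with $a,b$ as in Proposition \ref{Cpq}. After clearing denominators this collapses to the necessary condition
\begin{equation}\label{necpq}
4(p-1)(q-1)\leq p^{a-1}q^{b-1}.
\end{equation}
A direct case analysis in $(a,b)\in\set{1,2}^2$ shows \eqref{necpq} always fails in that range, so any surviving pair must satisfy $a\geq3$ or $b\geq3$; equivalently, at least one of $p^3\mid q^{p-1}-1$, $q^3\mid p^{q-1}-1$ (or $8p\mid q^2-1$ when $p=2$) must hold.

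The rest is a finite enumeration over primes $p<q<10^3$. For $p=2$, the condition $b\geq2$ would require $q$ to be a Wieferich prime; since no Wieferich prime lies below $10^3$, we have $b=1$, and combined with the explicit bound $a=\nu_2(q^2-1)\leq\log_2(q+1)+O(1)$ one verifies that \eqref{necpq} fails for every odd $q<10^3$ (the tightest candidates are Mersenne or Fermat primes, which are checked directly). For both $p,q$ odd, the computer enumeration already referenced in the appendix lists every pair with $\nu_p(q^{p-1}-1)\geq3$ or $\nu_q(p^{q-1}-1)\geq3$; this list is short, and substituting the actual values of $a,b$ into \eqref{necpq} refutes each entry.

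The main obstacle will be handling any borderline pair where \eqref{necpq} is close to tight, i.e., where one exponent is moderately large: for such pairs the field-descent bound alone may not give immediate contradiction, and one must supplement it exactly as in Propositions \ref{pmq3} and \ref{p4q4}, by invoking $\abs{T(\ze_N)}^2=\wtn{S}(1)\abs{T}^3/N$ with $\wtn{S}(1)\leq3$ (Proposition \ref{Som}) and the divisibility $\abs{T}\cdot\abs{S}=N$ from Proposition \ref{STN} to restrict the arithmetic structure of $\abs{T}$ and $\abs{S}$, then apply a second round of Theorem \ref{Fbound} to $\abs{T(\ze_N)}^2$. Since no Wieferich pair of primes lies entirely below $10^3$, no fundamentally new obstacle appears beyond those already resolved in the preceding propositions.
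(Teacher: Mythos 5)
Your proposal is correct and follows essentially the same route as the paper: both hinge on $\abs{S(\ze_N)}^2=\abs{S}^3/N\geq\sqrt{N}\geq p^2q^2$ from Proposition \ref{Som}, the field-descent bound of Theorem \ref{Fbound} with $C=1$ and $F(N,\cdot)\leq p^aq^b$ from Propositions \ref{Fproperties} and \ref{Cpq}, and a finite check over prime pairs below $10^3$ exploiting the absence of Wieferich primes and pairs in that range. Your reorganization via the single necessary condition $4(p-1)(q-1)\leq p^{a-1}q^{b-1}$ is a clean equivalent of the paper's separate odd/even case analysis, and the contingency in your last paragraph (a second application of Theorem \ref{Fbound} to $\abs{T(\ze_N)}^2$) turns out not to be needed here.
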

 
 \begin{proof}
  Without loss of generality, we assume $p<q$. Also, we suppose that $T,S\ssq\ZZ_N$ is a primitive pair of formally dual sets with $\abs{S}\geq\abs{T}$. Then by Proposition \ref{Som},
  \[\abs{S(\ze_N)}^2=\frac{\abs{S}^3}{N}\geq\sqrt{N}\geq p^2q^2.\]
  Applying Theorem \ref{Fbound} and Propositions \ref{Fproperties} and \ref{Cpq} we obtain
  \begin{equation}\label{mn4}
  p^2q^2\leq\frac{F(N,\abs{S(\ze_N)}^2)^2}{4\vphi(F(N,\abs{S(\ze_N)}^2))}\leq\frac{F(N,pq)}{4(1-\frac{1}{p})(1-\frac{1}{q})}.
  \end{equation}
  Suppose first that $N$ is odd; for $p,q<10^3$, we have either $F(N,pq)=p^aq$ for some $a\leq5$ or $F(N,pq)=pq^2$. In the latter case, \eqref{mn4} yields
  \[4(1-\tfrac{1}{p})(1-\tfrac{1}{q})p<1,\]
  a contradiction, as the left hand side is $>2p$. In the former case, we get
  \[4(1-\tfrac{1}{p})(1-\tfrac{1}{q})q<p^{a-2}\]
  which implies
  \begin{equation}\label{Nodd}
  2q<p^{a-2},
  \end{equation}
  since $4(1-\tfrac{1}{p})(1-\tfrac{1}{q})\geq4\cdot\frac{8}{15}>2$ when $p$, $q$ are odd primes. Therefore, either $a=4$ or $a=5$; both cases occur only when $q\geq163$. The case $a=5$ happens only for $p=3$;
  \eqref{Nodd} cannot hold, as $326\leq 2q<27$ is false. Moreover, $a=4$ happens for $p\leq 13$, but again \eqref{Nodd} fails, as $p^{2}\leq 169<326\leq 2q$.
  
  Next, suppose that $N$ is even, that is $p=2$. \eqref{mn4} gives
  \begin{equation}\label{pis2}
  8q(q-1)\leq F(N,2q)=\gcd(2^mq^n,\tfrac{1}{2}\ord_4(q)(q^2-1)(2^{q-1}-1))=2^aq,
  \end{equation}
  since no prime $q<10^3$ is Wieferich; here, $a=\min(m,\nu_2(\tfrac{1}{2}\ord_4(q)(q^2-1)))$. \eqref{pis2} is equivalent to
  \begin{equation}\label{Neven}
   q-1<2^{a-3}.
  \end{equation}
  For $q<10^3$, $\nu_2(\tfrac{1}{2}\ord_4(q)(q^2-1))\leq8$ holds, so \eqref{Neven} implies $q-1<32$ or $q\leq31$. For this range of primes, $\nu_2(\tfrac{1}{2}\ord_4(q)(q^2-1))\leq6$, so \eqref{Neven}
  further gives $q-1<8$ or $q\leq7$. This argument once again gives $\nu_2(\tfrac{1}{2}\ord_4(q)(q^2-1))\leq4$, so \eqref{Neven} implies $q-1<2$, a contradiction, concluding the proof.
 \end{proof}
 
 The number of prime pairs $(p,q)$ with $p<q<10^3$ is 14028. Propositions \ref{singleq}, \ref{poddq2}, \ref{p4q3}, \ref{p2q2}, \ref{p3q3}, \ref{pmq3}, \ref{p4q4} show that the only possible exceptions come
 from orders $N$ of the form $p^{2k}q^2$. The number of pairs with possible exceptions is only 162, and the total number of exceptions (that cannot be solved with the methods developed here) is 290.
 
 Indeed, we consider first the case $p=2$; then, either $N=2^{2k}q^2$ or $4q^{2k}$. Applying \eqref{lower} and \eqref{upper} in the latter case, we obtain
 \[2q^2\leq\sqrt{N}\leq\frac{4q}{2(1-\frac{1}{q})}.\]
 We remark that $b=1$ in \eqref{upper}, as no prime $q<10^3$ is Wieferich; furthermore, the quantity $F(N,n)$ in \eqref{upper} is $\leq F(N,2q)=4q$ due to Proposition \ref{Cpq}. 
 The above gives $q-1\leq 1$ a contradiction. Thus, all such exceptions are of the form $2^{2k}q^2$, $k\geq2$, hence applying \eqref{lower} and \eqref{upper} we obtain
 \[2^kq\leq\frac{2^aq}{2(1-\frac{1}{q})},\]
 which is equivalent to $2\leq k\leq a-1$, giving $a-2$ exceptions. We can easily calculate $a$ for every $q<10^3$ and verify\footnote{
 Check \texttt{orders2.wxmx} at \url{https://sites.google.com/site/romanosdiogenesmalikiosis/computational-data}} that the number of such exceptions for $N$ even are 240.
 
 If $N$ is odd, we have possible exceptions of the form $p^{2k}q^2$ or $p^2q^{2k}$, for $p<q<10^3$. Inequalities \eqref{lower} and \eqref{upper} give in the former case
 \[p^kq\leq\frac{p^aq^b}{4(1-\frac{1}{p})(1-\frac{1}{q})}<\frac{1}{2}p^aq^b.\]
 We either have $a=1$ or $b=1$ when $p,q<10^3$. For $a=1$ the above inequality can only hold for $b>1$, however, since $b\leq2$ if $a=1$, we get the pairs from \eqref{qa2}. The previous
 inequality becomes $2p^{k-1}<q$, and we have a total of 7 exceptions; every pair has an exception of the form $p^4q^2$ and there is also the additional exception $13^6 863^2$. When $b=1$, the inequality
 becomes $2p^k<p^a$ or $k<a$, or $a-2$ exceptions for every such pair as $k\geq2$. We can calculate all such exceptions\footnote{
 Check \texttt{orders.wxm} at \url{https://sites.google.com/site/romanosdiogenesmalikiosis/computational-data}}; they are another 42 in total which come from 32 pairs, and this tackles the case $N=p^{2k}q^2$ with
 $p<q<10^3$, which gives a total of 49 cases. When $N=p^2q^{2k}$, we have only one other possible exception, namely $13^2 239^4$; indeed, \eqref{lower} and \eqref{upper} give
 \[pq^k\leq\frac{p^aq^b}{4(1-\frac{1}{p})(1-\frac{1}{q})}<\frac{1}{2}p^aq^b.\]
 If $a=1$ we would have $b\leq2$ and $pq^k<pq^2$, a contradiction, as $k\geq2$. So $b=1$, so the above inequality becomes $2q^{k-1}<p^{a-1}$, which is only satisfied for $p=13$, $q=239$, $k=2$, thus
 obtaining a total of 50 possible exceptions when $N$ is odd. Therefore, the total number of possible exceptions is 290.

 \bigskip
 \bigskip
 
  \bibliographystyle{amsplain}
\bibliography{mybib}
 
 \end{document}